\numberwithin{equation}{section}
\theoremstyle{plain}
\newtheorem{theorem}{Theorem}[section]
\newtheorem{proposition}[theorem]{Proposition}
\newtheorem{corollary}[theorem]{Corollary}
\newtheorem{lemma}[theorem]{Lemma}
\newtheorem{observation}[theorem]{Observation}
\theoremstyle{definition}
\newtheorem{definition}[theorem]{Definition}
\theoremstyle{remark}
\DeclareMathOperator{\supp}{supp}
\DeclareMathOperator{\e}{e}
\newcommand{\N}{\mathbb{N}}
\newcommand{\Z}{\mathbb{Z}}
\newcommand{\R}{\mathbb{R}}
\newcommand{\C}{\mathbb{C}}
\newcommand{\im}{\mbox{Im}}
\newcommand{\re}{\mbox{Re}}
\newcommand{\abs}[1]{\vert #1 \vert}
\newcommand{\absb}[1]{\bigl\vert #1 \bigr\vert}
\newcommand{\absbig}[1]{\biggl\vert #1 \biggr\vert}
\newcommand{\norm}[1]{\| #1 \|}
\newcommand{\normbig}[1]{\biggl\| #1 \biggr\|}
\newcommand{\parent}[1]{\bigl(#1\bigr)}
\newcommand{\parentbig}[1]{\biggl(#1\biggr)}
\newcommand{\set}[1]{\bigl\{#1\mathclose{}\bigr\}}
\newcommand{\bigo}[1]{O\mathopen{}\left(#1\right)}
\newcommand{\japbrak}[1]{\langle#1\rangle}
\newcommand{\interval}[4]{\mathopen{#1}#2\mathclose{}\mathpunct{},#3\mathclose{#4}}
\newcommand{\intervaloo}[2]{\interval{(}{#1}{#2}{)}}
\newcommand{\intervaloc}[2]{\interval{(}{#1}{#2}{]}}
\newcommand{\intervalco}[2]{\interval{[}{#1}{#2}{)}}
\newcommand{\intervalcc}[2]{\interval{[}{#1}{#2}{]}}
\newcommand{\disp}{\displaystyle}
\begin{document}


\title[Scattering for the 3D NLS with random initial data]{Scattering for the cubic Schrödinger equation in 3D with randomized radial initial data}

\author{Nicolas Camps}
\address{Universit\'e Paris-Saclay, Laboratoire de mathématiques d'Orsay, UMR 8628 du CNRS, B\^atiment 307, 91405 Orsay Cedex,}
\email{nicolas.camps@universite-paris-saclay.fr}

\subjclass[2020]{35B40 primary, 42B37, 35A01, 35Q55, 35B60, 35R60 secondary}

\keywords{Nonlinear Schrödinger equation, scattering, probabilistic Cauchy theory, stability theory, interaction Morawetz, I-method}

\date{\today}

\begin{abstract}
We obtain almost-sure scattering for the cubic defocusing Schr{\"o}dinger equation in the Euclidean space $\mathbb{R}^3$, with randomized radially-symmetric initial data at some supercritical regularity scales. Since we make no smallness assumption, our result generalizes the work of B{\'e}nyi, Oh and Pocovnicu~\cite{benyi2015}. It also extends the results of Dodson, L{\"u}hrmann and Mendelson~\cite{dodson-luhrmann-mendelson-19} on the energy-critical equation in $\mathbb{R}^4$, to the energy-subcritical equation in $\mathbb{R}^3$. In this latter setting, even if the nonlinear Duhamel term enjoys a stochastic smoothing effect that makes it subcritical, it still has infinite energy. In the present work, we first develop a stability theory from the deterministic scattering results below the energy space, due to Colliander, Keel, Staffilani, Takaoka and Tao. Then, we propose a globalization argument in which we set up the I-method with a Morawetz bootstrap in a stochastic setting. To our knowledge, this is the first almost-sure scattering result for an energy-subcritical Schrödinger equation outside the small data regime. 
\end{abstract} 
\ \vskip -1cm  \hrule \vskip 1cm \vspace{-8pt}
 \maketitle 
{ \textwidth=4cm \hrule}

\vspace{-6pt}
\tableofcontents
\pagebreak{}
\section{Introduction}
We consider the initial value problem for the cubic defocusing Schrödinger equation
\begin{equation}
\label{eq:nls}
\tag{NLS}
\begin{cases}
i\partial_t u(t,x) +\Delta u(t,x) = \absb{u(t,x)}^2u(t,x),\quad (t,x)\in\R\times\R^3\,.\\
u(0,x) = u_0(x)\,,
\end{cases}
\end{equation}
The equation has a Hamiltonian structure associated with the energy
\begin{equation}
    \label{eq:energy}
    E(u)= \frac{1}{2}\int_{\R^3}\abs{\nabla u(\cdot,x)}^2dx+\frac{1}{4}\int_{\R^3}\abs{u(\cdot,x)}^4dx\,. 
\end{equation}
For smooth solutions, the above energy is conserved as time evolves. The mass
\[
M(u)= \int_{\R^3}\abs{u(\cdot,x)}^2dx\,
\]
is also formally conserved. In addition, the equation enjoys the scaling symmetry. Namely, given $\lambda>0$ and $u$ a solution to the equation,
\[
u_\lambda(t,x) = \lambda^{-1}u(\lambda^{-2}t,\lambda^{-1}x)\,,\quad (t,x)\in\R\times\R^3\,,
\]
is also a solution. Since the scaling symmetry leaves invariant the homogeneous Sobolev space $\dot{H}^{1/2}$, the scale $s_c=1/2$ is \textit{critical}. In the \textit{scaling-subcritical} regime $s>1/2$, we deduce from a contraction mapping argument that the initial value problem is \textit{locally well-posed} in the following sense:
\begin{itemize}
    \item (Uniqueness) Given two solutions $u_1, u_2$ in $H^s$ and a time $t$, if $u_1(t)=u_2(t)$ then $u_1=u_2$.
    \item (Existence) For all $u_0\in H^s$, there exists $T(\norm{u_0}_{H^s})>0$ and a solution to~\eqref{eq:nls}
    \[
    u(t) = \e^{it\Delta}u_0 - i \int_0^t \e^{i(t-\tau)\Delta}\abs{u(\tau)}^2u(\tau)d\tau\,,\quad u\in C(\intervalcc{-T}{T};H^s)\,.
    \]
    \item (Continuity) The solution map $u_0\in H^s\mapsto u \in C(\intervalcc{-T}{T},H^s)$ is continuous.
\end{itemize}
    
\subsection{Known results}

Note that in the \textit{scaling-subcritical} regime, the maximal lifespan of the solution is bounded from below by a quantity that only depends on the $H^s$ norm of the initial data. Therefore, one can use a continuity argument and deduce global existence from a priori uniform bound on the $H^s$ norm of the solution.

In the present case, $s_c<1$ and the equation is \textit{energy-subcritical}. Note that an initial data in $H^1$ have a finite energy. Besides, it follows from the defocusing nature of the equation that the energy is coercive, and yields a uniform control on the $H^1$ norm of the solution so that~\eqref{eq:nls} is globally well-posed in the energy space $H^1$. In addition, the solution display \textit{linear scattering} in large time. Namely, for all $u_0\in H^1$, there exist some unique final states $u_\pm\in H^1$ such that 
\[
\underset{t\to\pm\infty}{\lim} \norm{\e^{-it\Delta}u(t)-u_\pm}_{H^1}=0\,.
\]
It is conjectured that~\eqref{eq:nls} is globally well-posed in $H^s$ for $1/2\leq s<1$, and that the solution scatters at infinity. Still, we do not have a coercive conservation law at these regularities and, in general, we cannot always preclude the $H^s$ norm from blowing up in finite time. In the \textit{scaling-critical} case, the time of existence of the solution does not only depend on its norm $\dot{H}^{1/2}$, but also on its profile. Nevertheless, by using a concentration-compactness method, Kenig and Merle proved in~\cite{kenig-merle-10} some \textit{conditional} scattering.

At \textit{scaling-supercritical} scales $s<1/2$, the equation is ill-posed (see for instance~\cite{christ-colliander-tao}). Nevertheless, following the pioneering work of Bourgain, a \textit{probabilistic Cauchy theory} has emerged. It displays the existence of a local flow defined on \textit{generic} sets of initial data, in some regimes where a deterministic Cauchy theory does not hold. In the present work, we address the globalization of such a probabilistic flow for~\eqref{eq:nls} in the Euclidean space, with randomized initial data at supercritical regularities $3/7<s$.

\subsubsection*{Global well-posedness \& Scattering below the energy space}
\label{paragraph-intro}
Let us present the general framework used to study~\eqref{eq:nls} at scaling-subcritical regularity, below the energy space, where we still have a local flow. At regularities $1/2\leq \sigma <1$, there is no coercive conservation law that prevents the Sobolev norm to grow up. Hence, to obtain global a priori estimates for a general initial data~\footnote{\ Note that one can easily prove scattering under smallness assumption on the initial data at regularity $1/2\leq \sigma <1$, by directly performing a contraction argument and obtaining global spacetime bounds.}, one has to make use of some modified energies. The idea is to consider a frequency truncated version of the solution, and to control the increments of its energy along the evolution. Bourgain implemented this idea in~\cite{bourgain-98} to prove global existence for the cubic Schrödinger equation in $H^\sigma(\R^2)$, for $2/3<\sigma$, by using the \textit{high-low} argument together with some bilinear estimates. The observation is that the nonlinear evolution of the high frequencies is well approximated by the linear evolution. Namely, if we evolve on a short time step the low-frequency modes by the nonlinear flow, and the high-frequency modes by the linear flow, the error we make turns out to be in the energy space $H^1$. Furthermore, the methods displays a \textit{nonlinear smoothing effect}: for all $t\in\R$,
\begin{equation}
u(t) - \e^{it\Delta}u_0 \in H^1\,. \label{eq:smoothing-effect}
\end{equation}
This globalization method is now standard, and was used in many contexts (see, e.g.,~\cite{kenig-ponce-vega-00} for the wave equation). Bourgain extended the method to the 3D case in~\cite{bourgain-98-3D}, where he proved global existence for $11/13<\sigma$. Then, Colliander, Keel, Staffilani, Takaoka and Tao refined this frequency-cutoff in~\cite{ckstt-2004} and introduced the I-operator, which goes from $H^\sigma$ to $H^1$. By exploiting the structure of the nonlinearity, the authors provide some frequency cancellations that come with a nonlinear smoothing effect, and that loosen the regularity threshold at which the local flow extends globally. 
\begin{definition}[I operator]
Given $1/2<\sigma<1$ and $N\gg1$, the I-operator is the Fourier multiplier 
\begin{equation}
    \label{eq:def:I}
    I = \mathcal{F}^{-1}(m\mathcal{F}),\quad m(\xi)=\begin{cases}
    1\quad &\text{if}\ \abs{\xi}\leq N,\\
    \parent{\frac{N}{\abs{\xi}}}^{1-\sigma}\quad &\text{if}\ \abs{\xi}\geq 2N\,.
    \end{cases}
\end{equation}
It holds that 
\begin{equation}
    \label{eq:norm:I}
    \norm{\nabla Iu}_{L^2}\leq\norm{u}_{H^\sigma}\,,\quad \norm{u}_{H^\sigma}\leq N^{1-\sigma}\norm{Iu}_{H^1}\,.
\end{equation}
\end{definition}
As $N$ becomes larger, the energy is supposed to grow to infinity, while we expect it's time derivative to decrease. Specifically, if we assume that $J\times\R^3$ is a spacetime slab where we have a smallness assumption on the $L_{t,x}^4$ norm of $u$, then 
\begin{equation}
\label{eq:almost-conservation-ckstt}
\int_J\abs{\frac{d}{dt}E(Iu)(\tau)}d\tau \lesssim \frac{1}{N} + \bigo{\frac{1}{N^2}}\,. 
\end{equation}
The method usually comes with a rescaling argument, that allows one to assume that the energy of $Iu$ is less than $1$ initially. In addition, the \textit{interaction Morawetz inequality} gives a bound on the $L_{t,x}^4$ norm, depending on the energy of $Iu$. Hence, we have a control on the number of spacetime slabs where the increment of $\mathcal{E}(v)$ satisfies~\eqref{eq:almost-conservation-ckstt}. By finely tuning the parameters $N$ and $s$, we would  be able to sum~\eqref{eq:almost-conservation-ckstt} over finitely many spacetime slabs, and to obtain a uniform bound on both the energy of $Iu$ and on the $L_{t,x}^4$ norm of $u$, from which scattering follows. The interaction Morawetz estimate reads 
\[
\int_J\int_{\R^3}\abs{u(t,x)}^4dxdt \lesssim \norm{u_0}_{L^2}^2\parent{\underset{t\in J}{\sup}\norm{u(t)}_{\dot{H}^{1/2}}}^2\,.
\]
It is due to~\cite{ckstt-2004}, and its proof consists in averaging over $y$ the recentered Morawetz action against the mass density, and in tracking some monotonicity. The I-method with a Morawetz bootstrap has been refined, and adapted in different contexts. Let us cite some works that are relevant to our problem. By the use of linear-nonlinear decomposition of the initial data,~\cite{dodson-13} reached the exponent $5/7<s$, further improved to $2/3<s$ in~\cite{qingtang-11} by combining the arguments of Dodson with a second generation I-method. In the present work, we implement the following key idea. To improve the results of~\cite{ckstt-2004}, Dodson provided a refined version of the energy increment estimate~\eqref{eq:almost-conservation-ckstt}, by keeping some sub-additive quantities on the right-hand-side: for some constant $c$ independent of $N$, 
\[
\int_J\abs{\frac{d}{dt}E(Iu)(\tau)}d\tau\lesssim \frac{1}{N^{1-}}\norm{\nabla P_{>cN}Iu}_{L_t^2(J;L_x^6(\R^3))}^2+\bigo{\frac{1}{N^2}}\,.
\] 
For radial initial data, this comes together with the so-called \textit{long-time Strichartz estimate}, that yields a uniform a priori estimate on $\norm{\nabla P_{>cN}Iu}_{L_t^2L_x^6}^2$. This strategy avoids loosing too many powers of $N$ when brutally multiplying the energy increments by the number of intervals where we have a control. Dodson implemented this idea in~\cite{dodson-13,dodson19} to prove scattering for $s>5/7$, and for $s>1/2$ in the radial case. However, the conjecture is still open for general data. Before ending this paragraph, we formulate the optimal result known at this date. We will largely use it throughout our analysis.~\footnote{\ Some stronger restrictions will appear in the analysis, and we shall only use~\eqref{eq:uniform-l5-bound} when $\sigma>6/7$. The high-low method from Bourgain~\cite{bourgain-98-3D} already settle global existence at these range of regularities, and the original version of the I-method from~\cite{ckstt-2004} yields the global spacetime estimates~\eqref{eq:uniform-l5-bound} and scattering.}
\begin{theorem}[Global well-posedness below the energy space,~\cite{ckstt-2004,dodson-13,qingtang-11}]
\label{theorem:deterministic-gwp}
Let $2/3<\sigma<1$ and $u_0\in H_x^\sigma(\R^3)$. The Cauchy problem~\eqref{eq:nls} with $u(t_0)=u_0$ is globally well-posed in and the solution scatters at infinity in $H_x^\sigma(\R^3)$. Moreover, there exists a constant $C(\norm{u_0}_{H_x^\sigma})>0$ such that 
\begin{equation}
\label{eq:uniform-l5-bound}
    \norm{u}_{L_{t,x}^5(\R\times\R^3)}\leq C(\norm{u_0}_{H_x^\sigma})\,.
\end{equation}
\end{theorem}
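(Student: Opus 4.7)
The plan is to combine the I-method with an interaction Morawetz bootstrap, following the strategy already sketched in the paragraphs preceding the statement. The first step is a scaling reduction: given $u_0\in H^\sigma$, introduce the rescaled datum $u_0^\lambda(x)=\lambda^{-1}u_0(\lambda^{-1}x)$ with $\lambda=\lambda(N,\sigma,\norm{u_0}_{H^\sigma})$ tuned so that $E(Iu_0^\lambda)\leq 1/2$; the required dependence of $\lambda$ on $N$ is dictated by \eqref{eq:norm:I} together with the homogeneity of the $\dot H^\sigma$ norm. Once global spacetime bounds are established for $u^\lambda$, undoing the rescaling yields \eqref{eq:uniform-l5-bound} for $u$, and scattering in $H^\sigma$ follows from the finiteness of $\norm{u}_{L_{t,x}^5(\R\times\R^3)}$ through a standard Duhamel--Strichartz comparison with the linear flow.

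Next I would run a continuity argument on $\intervalcc{0}{T_\lambda}$ for arbitrary $T_\lambda>0$. The interaction Morawetz inequality combined with \eqref{eq:norm:I} and an interpolation argument yields an a priori bound on $\norm{u^\lambda}_{L_{t,x}^4(\intervalcc{0}{T_\lambda}\times\R^3)}^4$ by $\norm{u_0^\lambda}_{L^2}^2\cdot\sup_{t\leq T_\lambda}E(Iu^\lambda(t))$. I would then partition $\intervalcc{0}{T_\lambda}$ into finitely many subintervals $J_k$ on which $\norm{u^\lambda}_{L_{t,x}^4(J_k\times\R^3)}\leq \eta$, for a small absolute constant $\eta$. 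On each $J_k$, a short-time fixed-point argument in the $I$-modified Strichartz norms together with a multilinear analysis of the commutator $[I,\abs{u}^2u]$ delivers the almost-conservation bound \eqref{eq:almost-conservation-ckstt}. Summing the energy increments over $k$ and enforcing that the cumulative growth remain bounded by $1/2$ calibrates $N$ against $\lambda$; the argument closes whenever the resulting algebraic condition is solvable.

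The CKSTT version of this scheme closes only for $\sigma>5/6$. To reach $\sigma>2/3$ as stated in Theorem~\ref{theorem:deterministic-gwp}, I would replace the crude increment by Dodson's refined bound
\[
\int_J\absbig{\tfrac{d}{dt}E(Iu)(\tau)}d\tau\lesssim \frac{1}{N^{1-}}\norm{\nabla P_{>cN}Iu}_{L_t^2(J;L_x^6)}^2+\bigo{\frac{1}{N^2}},
\]
and couple it with a \emph{long-time Strichartz estimate} that controls $\norm{\nabla P_{>cN}Iu^\lambda}_{L_t^2L_x^6}^2$ uniformly on $\intervalcc{0}{T_\lambda}$. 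The long-time Strichartz bound is itself proved by an internal bootstrap using bilinear Strichartz and $U^p$--$V^p$ methods. Combined with the second-generation I-operator of~\cite{qingtang-11}, this pushes the threshold down to $\sigma>2/3$. With a uniform bound on $E(Iu^\lambda)$ in hand, \eqref{eq:uniform-l5-bound} follows from a further subdivision of the time axis into intervals on which a short-time Strichartz contraction closes.

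The main obstacle is the coupled bootstrap at the sharp threshold: one must simultaneously control $\sup_t E(Iu^\lambda(t))$ and $\norm{\nabla P_{>cN}Iu^\lambda}_{L_t^2L_x^6}^2$, extracting the full frequency cancellations of $[I,\abs{u}^2u]$ while carefully tracking the scaling parameter $\lambda$. The remaining ingredients---rescaling, the interaction Morawetz estimate, the commutator analysis, and the final extraction of $L_{t,x}^5$ from the modified energy bounds---are, by comparison, standard.
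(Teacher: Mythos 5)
There is nothing in the paper to compare against: Theorem~\ref{theorem:deterministic-gwp} is not proved in this paper at all, it is imported as a black box from~\cite{ckstt-2004,dodson-13,qingtang-11}, and the introduction only sketches the same I-method/interaction-Morawetz strategy that you outline. At that level your proposal is consistent with the cited literature (rescaling so that $E(Iu_0^\lambda)\leq 1/2$, partitioning time via the interaction Morawetz bound into slabs with small $L_{t,x}^4$ norm, almost conservation of $E(Iu)$ via the commutator $I\mathcal{N}(u)-\mathcal{N}(Iu)$, calibrating $N$ against $\lambda$, then extracting the global $L_{t,x}^5$ bound and scattering). Note also that for the purposes of this paper only $\sigma>6/7$ is ever used (see the footnote to the theorem), where the original CKSTT argument already suffices.

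Taken as a proof, however, your text is a roadmap rather than an argument: every step where the actual difficulty lives — the multilinear commutator estimates giving~\eqref{eq:almost-conservation-ckstt}, Dodson's refined increment, the a priori control of $\norm{\nabla P_{>cN}Iu^\lambda}_{L_t^2L_x^6}$, and the closing of the coupled bootstrap at the threshold — is asserted or delegated to citations, which is exactly what the paper itself does by citing the theorem. Two substantive corrections if you intend to flesh this out. First, the long-time Strichartz estimate you invoke to control $\norm{\nabla P_{>cN}Iu}_{L_t^2L_x^6}$ is, in 3D, only available for \emph{radial} data (it rests on local energy decay, as the paper points out when discussing~\cite{dodson19}); the stated threshold $\sigma>2/3$ for general $H^\sigma$ data is obtained in~\cite{qingtang-11} by combining a second-generation I-method with Dodson's linear--nonlinear decomposition of the data~\cite{dodson-13}, not with a long-time Strichartz bootstrap. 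Second, a minor attribution slip: the CKSTT scheme with the interaction Morawetz inequality closes at $\sigma>4/5$ in~\cite{ckstt-2004}; the $5/6$ threshold belongs to the earlier almost-conservation argument without Morawetz.
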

\subsubsection{Probabilistic Cauchy theory }
Given a randomized initial data $f_0^\omega$, which is basically the superposition of modes decoupled by multiplying each of them with 
a Gaussian variable, the probabilistic approach consists in performing a linear-nonlinear decomposition in the Duhamel integral representation, and in studying the nonlinear term
\[
v(t) = u(t)-\e^{it\Delta}f_0^\omega = \int_0^t\e^{i(t-t')\Delta}\parent{\abs{\e^{it'\Delta}f_0^\omega}^2\e^{it'\Delta}f_0^\omega}dt' + \cdots\,.
\]
The remaining terms correspond to higher order Duhamel iterations. It turns out that the probabilistic decoupling enhances the nonlinear smoothing effect ~\eqref{eq:smoothing-effect}, even in scaling-supercritical regimes and, in the favorable cases, we expect $v$ to be at critical, or subcritical regularity, where we have some local existence result. Besides, $v$ is solution to ~\eqref{eq:nls} with a stochastic forcing term. 
\begin{equation}
    \tag{NLS$_f$}
    \label{eq:nls:f}
    \begin{cases}
    i\partial_tv+\Delta v = \abs{v}^2v + \parent{\abs{v+f}^2(v+f)-\abs{v^2}v}\,,\quad (t,x)\in\R\times\R^3\,. \\
    v(t_0) = v_0\in H^\sigma\,,
    \end{cases}
\end{equation}
At time $t=0$, we have $v(0)=0$. Hence, we can exploit the deterministic Cauchy theory at scaling-subcritical regularities, at least locally in time, and prove almost sure local well-posedness by some perturbative arguments. This strategy corresponds to the Da Prato Debussche trick~\cite{da-prato02}. In the context of dispersive equations, it was first implemented by Bourgain for the 2D Schrödinger equation on the torus~\cite{bourgain94}, and then by Burq and Tzvetkov for the wave equation on compact manifolds without boundaries~\cite{burq-tzvetkov-2008I,burq-tzvetkov-2008II}. In such contexts, the randomization procedure is based on the spectral resolution of the Laplacian. However, on the Euclidean space, we do not have such natural decompositions for the initial data. It turns out that the so-called \textit{Wiener randomization} proved to be suitable, in the sense that linear evolution of Wiener randomized initial data displays improved spacetime integrability, and the nonlinear Duhamel term benefits from a nonlinear probabilistic smoothing that makes it subcritical.
\begin{definition}[Wiener randomization]
Let $f_0\in L_x^2(\R^3)$ be radially-symmetric. We define the unit-scale Wiener decomposition of $f_0$ by 
\[
f_0\sim \sum_{k\in\Z^3}Q_kf_0\,,
\]
where $Q_k$ is the Fourier projector associated with a smooth multiplier localized on the unit cube centered around $k\in\Z^3$. Next, given a sequence of real valued mean zero and independent complex Gaussian variables $\{g_k\}_{k\in\Z^3}$, we define a random variable 
\begin{equation}
\label{eq:def:fomega}
\omega\in\Omega\mapsto f_0^\omega = \sum_{k\in\Z^3}g_k(\omega)Q_kf_0\,.
\end{equation}
\end{definition}
Let us emphasize that the procedure does not gain regularity in the sense that if $f_0\in H^s\setminus \bigcap_{\delta>s}H^{\delta}$, then $f_0^\omega\in H^s\setminus \bigcap_{\delta>s}H^{\delta}$ for almost every $\omega$. We refer to the works~\cite{luhrmann-mendelson-14,benyi2015-local,benyi2015,benyi2019, benyi-oh-pocovnicu-2019,dodson-luhrmann-mendelson-20,dodson-luhrmann-mendelson-19} and the references therein for further details on the Wiener randomization. 
\subsubsection{Globalization arguments in the stochastic setting}
Besides the construction of local solutions in supercritical regimes, one may ask about the global existence and asymptotic behavior of these solutions. This question is out of reach in general, and the probabilistic method is mainly concerned with perturbations of the zero solution. Still, there are different globalization procedures in such a stochastic context outside the small data regime that make use, in a mild sense, of the conservation laws.
In the compact setting, we can use some invariant, or quasi-invariant measure as a substitute for the conservation of energy. Once again, this was initiated by Bourgain, who settled in~\cite{bourgain94,bourgain96-gibbs} the invariance property of the Gibbs measure for~\eqref{eq:nls} on the torus, in dimension 1 and 2. In~\cite{burq-tzvetkov-2008II}, Burq and Tzvetkov proved almost sure global well-posedness by using the energy increment for $v$ and a Gronwall's argument.

In the Euclidean space, we do not have invariant measures and the globalization arguments are mostly based on deterministic methods. Sill, some works concern the harmonic oscillator. Thanks to the so-called \textit{lens transform} that intertwine the harmonic oscillator and~\eqref{eq:nls}, a randomization procedure emerges from the decomposition on the Hermite functions, that forms an orthonormal basis of eigenfunctions for the harmonic oscillator. On the support of the induced measure, that lies in a space just below $L^2$, we have global existence and scattering under smallness assumptions (see~\cite{poiret2012,prt2014} in dimension $d\geq2$). Recently, Burq and Thomann evidenced in~\cite{burq-thomann2020} that this randomization procedure provides quasi-invariant measures for Schrödinger equations in 1D, and proved almost-sure scattering. However, note that in higher dimensions $d\geq2$, the randomization on the Hermite functions induces a gain of regularity in $L^p$ for $p\neq2$, as well as a gain of decay in weighted $L^2$-spaces. To address energy-critical equations, for which the conservation law is not enough, the natural strategy in the random data setting consists in using a stability theory for~\eqref{eq:nls:f} in the energy space, and to infer a priori space-time bounds, from which we deduce scattering. This method was used by~\cite{dodson-luhrmann-mendelson-20} to prove almost sure scattering for the 4D energy critical wave equation. Note that the case of the wave equation is more favorable, since the energy controls $\partial_tv$. Nevertheless, Killip, Murphy and Visan proved the counterpart of~\cite{dodson-luhrmann-mendelson-20} for the energy critical Schrödinger equation in 4D via a double bootstrap argument, that combines a modified energy and a Morawetz like estimate. In addition, the result of~\cite{killip-murphy-visan-2019} was improved in~\cite{dodson-luhrmann-mendelson-19}, where the authors finely exploited the local smoothing effect. Namely, they refined the probabilistic local well-posedness theory in a new functional framework, inspired by some spaces used to study Schrödinger maps, together with an improved Sobolev embedding for randomized radial initial data. We stress out that the stability-perturbation strategy is more likely to work under some radial assumption on the data, in order to use local energy decay estimate and to gain smoothness when estimating the modified energy increments, with rough data. Specifically, one needs to use a radial Sobolev embedding at some point in the analysis, in order to absorb a weight that comes from the need to use local energy decay, or from the Lin-Strauss estimate. However, Oh, Okamoto and Pocovnicu~\cite{oh-okamoto-19} proved almost sure global existence for the energy critical Schrödinger in higher dimension $d\geq5$, where the potential energy controls more terms. To our knowledge, stability-perturbation arguments for NLS were only used in the energy critical case. In the energy-subcritical case, one may be lead to consider modified energy arguments as well. Colliander and Oh already used the high-low method in a stochastic context, to prove almost-sure global well-posedness for the cubic periodic Schrödinger equation below $L^2(\mathbb{T})$ in ~\cite{colliander-oh-2012},  using the mass as a conservation law. Then, Lührmann and Mendelson~\cite{luhrmann-mendelson-14} applied the high-low method to prove global well-posedness for the NLW. Poiret Robert and Thomann also used this method to prove scattering for supercritical small initial data for the harmonic oscillator in~\cite{prt2014}. More recently, the authors of~\cite{gubinelli-koch-oh2021} used the I-method in the context of the stochastic nonlinear wave equation. In~\cite{fan-mendelson-20}, the I-method was used to address the log-log blow up profile for the mass critical NLS in $L^2(\R^2)$, with a randomization procedure based on wiener cubes.

In the present work, we consider the 3D case, where the equation is energy-subcritical. Hence, there is a gap to bridge between the scaling-critical space $\dot{H}^{1/2}$ and the energy space $\dot{H}^1$. The known results for this equation with supercritical initial data are the followings. On the one hand, we have the almost-sure scattering result for small-data that comes from~\cite{poiret2012} by the use of the lens transform. On the other hand, the works of~\cite{benyi-oh-pocovnicu-2019} based on the wiener randomization address scattering for small data, and conditional global existence for general data.
Let us also mention the work ~\cite{benyi-oh-pocovnicu-2019}, where the authors used higher order expansion in the Duhamel formula. The idea is to refine Da Prato Debussche's trick and include higher order terms in the Ansatz. By doing so, they prove that the remainder lies in $H^1$, and has finite energy. Yet, it is not clear how to deduce global well-posedness from this observation.
\vspace{-5pt}
\subsection{Main result and outline of the proof}
Our approach is a bit different. Since $v$ is not in the energy space but just below, we use the scattering theory in $H^\sigma$ for $1/2<\sigma<1$ detailed in paragraph~\ref{paragraph-intro}, and we establish a stability theory for the perturbed equation~\eqref{eq:nls:f} to obtain a conditional scattering result. Then, we obtain some global a priori estimates and prove the following result.
\vspace{-8pt}
\begin{theorem}
\label{theorem:main}
Let $s,\sigma$ be such that $3/7<s\leq1$, and $6/7<\sigma<2s$. Given a radially-symmetric function $f_0$ in $H^s(\R^3)$, we define the randomized initial data $f_0^\omega$ as in~\eqref{eq:def:fomega}. Then, for almost every $\omega$, there exists a global solution $u$ to the defocusing cubic Schrödinger equation~\eqref{eq:nls},
with
\[u-\e^{it\Delta}f_0^\omega\in  C\parent{\R;H_x^\sigma(\R^3)}\,.\]
Moreover, there exist $u_\pm\in H_x^\sigma(\R^3)$ such that 
\[\underset{t\to\pm\infty}{\lim}\norm{u(t)-\e^{it\Delta}\parent{f_0^\omega+u_\pm}}_{H_x^\sigma(\R^3)}=0\,.\]
\end{theorem}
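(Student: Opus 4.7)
My plan is to perform the Da Prato--Debussche decomposition $u=F+v$ with $F(t):=\e^{it\Delta}f_0^\omega$, so that $v$ solves the forced equation~\eqref{eq:nls:f} with $v(0)=0$, and to run a deterministic stability theory for~\eqref{eq:nls:f} around the global scattering solutions of~\eqref{eq:nls} provided by Theorem~\ref{theorem:deterministic-gwp}. This strategy reduces the whole problem to a uniform a priori bound $\sup_{t\in\R}\norm{v(t)}_{H^\sigma_x}<\infty$, which I would then obtain via a stochastic version of the I-method coupled with an interaction Morawetz bootstrap.

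The first step is the probabilistic local theory. By Wiener randomization, $F$ satisfies, almost surely, improved space-time Strichartz estimates; combined with the radial Sobolev refinement available for randomized radial data, the first Picard iterate $\int_0^t \e^{i(t-\tau)\Delta}\abs{F}^2F\,d\tau$ turns out to lie in $H^\sigma_x$, thanks to the probabilistic smoothing that heuristically gains one derivative---this is where the constraint $\sigma<2s$ enters. A contraction in a Strichartz-type space centered at $H^\sigma_x$ then produces a local solution $v\in C([0,T];H^\sigma_x)$ on intervals whose length depends only on $\norm{v(0)}_{H^\sigma_x}$ and on the random Strichartz norms of $F$, together with a continuation criterion based on $\norm{v(t)}_{H^\sigma_x}$.

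Next I would set up the stability theory. Given $t_0\in\R$, let $w$ be the global scattering solution to~\eqref{eq:nls} provided by Theorem~\ref{theorem:deterministic-gwp} with $w(t_0)=v(t_0)$; since $\sigma>2/3$, one has $\norm{w}_{L^5_{t,x}(\R\times\R^3)}<\infty$. I would prove that if $v$ solves~\eqref{eq:nls:f} on a slab $J$ and the random source terms $\abs{F}^j\abs{v}^{3-j}$, $j=1,2,3$, are small in appropriate Strichartz norms on $J$, then $\norm{v-w}_{L^\infty_t H^\sigma_x(J)}$ remains small. Partitioning $\R$ into finitely many such slabs---which is possible almost surely by the probabilistic integrability of $F$---and iterating, a uniform bound $\sup_t\norm{v(t)}_{H^\sigma_x}\leq M$ upgrades to global existence and scattering of $v$ in $H^\sigma_x$, whence the claim for $u$.

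The heart of the proof, and the main obstacle, is producing the uniform $H^\sigma_x$ bound on $v$. Fix $N=N(\norm{f_0}_{H^s},\omega)$, apply the I-operator of~\eqref{eq:def:I} to $v$, and track the modified energy $E(Iv)$ along~\eqref{eq:nls:f} after a rescaling ensuring $E(Iv(0))\leq 1$. Its time derivative produces the classical CKSTT commutator together with new random contributions from $I\parent{\abs{v+F}^2(v+F)-\abs{v}^2v}$. Estimating each mixed term by probabilistic Strichartz, radial Sobolev embeddings, and Dodson's sub-additive refinement, I aim at an increment of schematic form
\[
\int_J\absb{\tfrac{d}{dt}E(Iv)(\tau)}\,d\tau \lesssim \frac{1}{N^{1-}}\normb{\nabla P_{>cN}Iv}_{L^2_t L^6_x(J\times\R^3)}^2 + N^{-\kappa}\mathcal{R}_\omega(v) + \bigo{N^{-2+}},
\]
with $\mathcal{R}_\omega(v)$ polynomial in $\norm{Iv}_{L^\infty_t H^1_x}$ and almost surely finite. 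The interaction Morawetz inequality applied to $u=F+v$, combined with probabilistic control on $\norm{F(t)}_{\dot H^{1/2}}$ and the mass of $u$, provides an $L^4_{t,x}$ bound depending only on $\sup_t\norm{Iv(t)}_{\dot H^1}$, while a radial long-time Strichartz estimate à la Dodson renders $\normb{\nabla P_{>cN}Iv}_{L^2_tL^6_x}$ sub-additive. Partitioning $\R$ into finitely many slabs on which $\norm{u}_{L^4_{t,x}}\leq \eta\ll 1$, summing the increments, and closing a bootstrap on $\sup_t E(Iv(t))$ is where the thresholds $s>3/7$ and $\sigma>6/7$ appear. The hard part will be controlling the random source terms---in particular $I(vF^2)$ and $I(v^2F)$---without losing derivatives on $F$ and while preserving the sub-additive structure of the long-time Strichartz estimate; the term $I(\abs{F}^2F)$ is precisely the one enforcing $\sigma<2s$. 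Once this is achieved, the stability theory of the second step converts the uniform $H^\sigma$ bound into the claimed almost-sure scattering.
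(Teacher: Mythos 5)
Your overall architecture (Da Prato--Debussche decomposition, stability theory around Theorem~\ref{theorem:deterministic-gwp} reducing everything to a uniform $H^\sigma$ bound on $v$, then an I-method/Morawetz double bootstrap) is the same as the paper's, and the first two steps are essentially correct. The gaps are in the globalization step, and they are genuine. First, you propose to apply the interaction Morawetz inequality to $u=F+v$ using ``probabilistic control on $\norm{F(t)}_{\dot H^{1/2}}$''. For $s<1/2$ (the interesting range, since $s>3/7$ is allowed) this norm is almost surely \emph{infinite}: the Wiener randomization does not gain regularity, so $f_0^\omega\notin \dot H^{1/2}$ and no large-deviation bound can rescue this. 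The paper instead writes the interaction Morawetz for the truncated solution $Iu$, where $\norm{If}_{\dot H^{1/2}}\lesssim N^{1/2-s}\norm{f_0}_{H^s}$ is finite and, precisely because $\sigma<2s$, of lower order than $N^{(1-\sigma)/2}$; the price is extra commutator terms in the Morawetz identity, which are then absorbed via the key Lemma~\ref{lemma:H}. Second, your rescaling ``ensuring $E(Iv(0))\leq 1$'' cannot be sustained: the random forcing contributes terms of size $N^{1-\sigma}F_2^\omega(J)^2\sim N^{2(1-\sigma)}$ to the energy increment (the derivative unavoidably falls on $If$, controlled only through the local-smoothing/radial-Sobolev quantity $F_2^\omega\sim N^{(1-\sigma)/2}$), so the energy cannot be kept below $1$; the paper explicitly abandons the scaling normalization and runs the bootstrap at the level $\mathcal{E}(v)\leq N^{2(1-\sigma)}$.

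Third, your increment estimate leans on ``a radial long-time Strichartz estimate à la Dodson'' to make $\norm{\nabla P_{>cN}Iv}_{L_t^2L_x^6}$ sub-additive. No such estimate is available here: the randomized data $f_0^\omega$ is not radial (only the building blocks $Q_kf_0$ are), and proving a long-time Strichartz estimate in this stochastic setting is exactly what the paper lists as a possible but unproven refinement. The paper's actual increment, Proposition~\ref{prop:energy:loc-incr}, keeps the sub-additive quantities $F_2^\omega(J)^2$ and $F_2^\omega(J)\norm{Iv}_{L_{t,x}^4(J)}^2$ on the right-hand side but multiplies the commutator contribution $N^{6(1-\sigma)-1}$ brutally by the number $L\sim N^{3(1-\sigma)}$ of small-$L^4$ slabs supplied by the Morawetz bound; requiring $9(1-\sigma)-1<2(1-\sigma)$ is precisely the source of the threshold $\sigma>6/7$, which in your scheme has no derivation. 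Relatedly, the paper's modified energy places $If$ inside the potential term, so that the only commutator is $H=I\mathcal{N}(u)-\mathcal{N}(Iu)$ for the full $u=v+f$ and the remaining perturbative term $\int\nabla If\,\mathcal{N}(\nabla Iu,Iu,Iu)$ is handled by $F_2^\omega$; tracking the plain $E(Iv)$ as you do leaves exactly the mixed terms $I(vF^2)$, $I(v^2F)$ that you yourself flag as ``the hard part'' without a mechanism to avoid losing derivatives on $F$. As it stands, the proposal identifies the right skeleton but the bootstrap as described would not close.
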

Uniqueness holds for $v = u-\e^{it\Delta}f_0^\omega\in X^\sigma(\R)\subset C(\R;H_x^\sigma(\R^3))$, solution to the nonlinear Schrödinger equation with some stochastic forcing terms~\eqref{eq:nls:f}, in the critical space $X^\sigma$ defined in~\eqref{def:bourgain-spaces} and with zero initial data. Note that we have scattering at regularity closed $H^{2s}$, which has to be compared to Theorem~\ref{theorem:deterministic-gwp}, where we do not directly have scattering in $H^1$. The threshold $3/7<s$ is certainly not optimal. Even if the local probabilistic flow exists up to $1/4<s$, it would be more realistic to reach $1/3<s$, since the deterministic theory is known for $2/3<\sigma$. One way of loosening the threshold on $s$ would be to have a local-smoothing estimate as~\eqref{eq:local:smoothing1} independent of $N$, and then to perform a scaling argument to have an energy smaller than $1$. In addition, one can also hope to prove a long-time Strichartz estimate in a probabilistic setting, and to refine the local existence theory by the use of lateral spaces as in~\cite{dodson-luhrmann-mendelson-19}.

We stress out that randomized initial data $f_0^\omega$ are not radial.~\footnote{One can also perform an extra microlocal randomization to decouple the orientation of each block that appears in~\eqref{eq:def:fomega}. For instance, one can multiply in the physical space each block by some independent Gaussian vectors on the sphere, and make therefore the initial data as generic as possible.} However, we follow~\cite{dodson-luhrmann-mendelson-20,dodson-luhrmann-mendelson-19} and we use the radial assumption on each frequency localized pieces $Q_kf_0$, for $k\in\Z^3$. Then, we deduce from probabilistic decoupling to deduce an estimate on the linear Schrödinger evolution of the  whole function $f_0^\omega\sim\sum g_n(\omega)Q_kf_0$, which is not radially-symmetric. The radial-symmetry assumption turns out to be crucial in our analysis, since it gives access to the energy decay estimates. It might be loosened, but probably not entirely removed. Indeed, when we estimate the modified energy increment, a lot of derivatives fall on the rough term $f$, and we need at least to gain of $1/2$ derivative when taking the $L^2$-in-time spacetime norms of $f$. For this purpose, we use the local-energy decay. Subsequently, since we avoid using some decay assumptions, we need to use a radial Sobolev embedding to absorb the weights that comes from the local smoothing estimates. This is also the reason Dodson settled the deterministic subcritical theory for radial data only, since the long time Strichartz estimates used in~\cite{dodson19} exploits the local energy decay effect. 
\subsubsection{Outline of the proof}
First, we fix $\alpha>0$, and $N_0=N_0(\alpha,s,\sigma,\norm{f_0}_{H^s})\gg1$, to be chosen at the end of the analysis. From the large deviation bounds presented in section~\ref{section:random}, we can find a set $\widetilde{\Omega}_\alpha$, with $\mathbb{P}(\Omega\setminus\widetilde{\Omega}_\alpha)\leq2\alpha$, such that for every $\omega$, the linear evolution of the random initial data $f_0^\omega$ satisfies refined global spacetime estimates. Specifically, for every $\omega\in\widetilde{\Omega}_\alpha$ and every $N\geq N_0$, we have the finiteness of $F^\omega(\R)$, $F_\infty^\omega(\R)$ and $F_2^\omega(\R)$ defined in~\eqref{eq:ass:data:def}. The large deviation estimate~\eqref{eq:large-deviation-2} for this latter quantity $F_2^\omega$ follows from an improved radial Sobolev embedding estimate, which is a weakened version in dimension 3 of the estimate (1.10) from~\cite{dodson-luhrmann-mendelson-20}. In some sense, it reflects the local energy decay for the linear Schrödinger evolution. Then, we first prove in section~\ref{sec:stability} a \textit{conditional} scattering result, written in Proposition~\ref{prop:uniform:scattering}. It states that the nonlinear Duhamel term $v$ scatters in $H^\sigma$ provided that we have a uniform bound for its $H^s$ norm, on its maximal lifespan. This follows from the stability theory for the perturbed equation~\eqref{eq:nls:f}, at the subcritical energy scales $H^s$ for $2/3<s<1$ where one can use the global spacetime bounds provided by the recent progresses we recalled in Theorem~\ref{theorem:deterministic-gwp}. We avoid the use of~\eqref{eq:large-deviation-2} and of the radial assumption in this section, and we restrict the analysis to some $\omega\in\widetilde{\Omega}_\alpha$, where we only assume~\eqref{eq:ass:data}.

The rest of the paper is devoted to the proof of such an a priori bound for $v$. For this purpose, we fix $6/7<\sigma<1$ and $v\in C(J^*;H^\sigma)$, the maximal lifespan solution of~\eqref{eq:nls:f}. Then, we perform a double bootstrap argument that involves a modified energy and a modified interaction Morawetz inequality, together with large deviation estimates~\eqref{eq:large-deviation-2} on $F_2$, that comes from the local energy decay. In section~\ref{sec:conservation}, we set up the I-method in a stochastic setting. First, we define the modified energy, and compute its increment on some intervals where we assume a smallness condition for the $L_{t,x}^4$ norm of $v$. In addition, we establish a modified interaction Morawetz estimate for the frequency truncated solution $Iu$, where $I:H^\sigma\to H^1$ is the I-operator at regularity $H^\sigma$. Indeed, the usual Morawetz cannot hold for the whole solution, since it requires $H^{1/2}$ regularity. Yet, the general strategy of the globalization argument is to consider $Iu$ instead, solution to the cubic Schrödinger equation with the forcing term $H=I\mathcal{N}(u)-\mathcal{N}(Iu)$. Namely, $H$ is the commutator between the nonlinearity and $I$. By doing so, we can exploit the nonlinear smoothing effect yielded by the frequency cancellations in the commutator $H$. Basically, we can extract a power $N^{-1}$ from spacetime quantities where $H$ appears, and we write a general key estimate in Lemma~\ref{lemma:H} that encapsulate this smoothing effect. Then,  we specify this estimate to handle the modified energy increment and the modified Morawetz interaction. Let us now comment a bit on the modified energy we use. It contains two terms, and writes
\[
\mathcal{E}(v)(t)\coloneqq \frac{1}{2}\int_{\R^3}\abs{\nabla
Iv(t,x)}^2dx+\frac{1}{4}\int_{\R^3}\abs{Iv+If}^4dx\,.
\]
The kinetic energy only depends on $v$, truncated by the I-operator. However, the potential energy depends on $f$, as in~\cite{sun-2015} and ~\cite{oh-okamoto-19}. The motivation for this is to preserve the structure of the unperturbed equation~\eqref{eq:nls}, and to benefit from the key frequency cancellations. Formally, we have
\[
\frac{d}{dt}\mathcal{E}(v)=\re\int_{\R^3} \partial_t(Iv)\parent{\mathcal{N}(Iu)-I\mathcal{N}(u)}dx + \re-i\int_{\R^3} \nabla(If)\mathcal{N}(\nabla Iu, Iu, Iu)dx\,.
\]
To control the first term, we exploit the frequency cancellations in the commutator, which do not appear in the second term. Indeed, this latter term is purely perturbative, and we handle it thanks to local smoothing ~\eqref{eq:ass:data:2}.
Similarly, we write the interaction Morawetz estimate for $Iu$, and not only $Iv$, in order to have some perturbation terms that contain $H$. Note that in the modified energy increment~\eqref{eq:E:loc-incr}, we use the aforementioned idea from Dodson~\cite{dodson-13,dodson19}, and we keep track of some sub-additive quantities on the right-hand side as in, not to lose too many powers of $N$ when summing over the spacetime slabs. Finally, we perform the double bootstrap in section~\ref{sec:conservation} as follows. From the almost conservation of energy, we gain a negative power of $N$, multiplied by some polynomial powers of $N$ on intervals where the $L_{t,x}^4$ norm of $Iv$ is small. However, the interaction Morawetz estimate yields a control on such a spacetime norm, that depends on the energy of $Iv$. Hence, by doing a continuity argument, and after tuning up the parameter $s,\sigma$, we may be able to sum the energy increments over a partition of $J^*$ made of spacetime slabs where the $L_{t,x}^4$ norm of $Iv$ is small, and to prove that the energy cannot exceed $N^{2(1-\sigma)}$ on the lifespan of $v$. At the difference of the standard framework, we cannot use the scaling argument in this perturbed setting. Indeed, some perturbation terms coming from $\norm{\nabla I f^\omega}_{L_t^2L_x^6}$ of size $N^\frac{1-\sigma}{2}$ appear on the right-hand side of the energy increment estimate, so that the energy cannot remain less than $1$ as time evolves. Furthermore, we will use the almost-conserved mass, which is harmless, although it is supercritical with respect to scaling. 
\subsubsection{Organization of the paper}
The analysis is divided into two parts. In the first one, we prove the conditional scattering result for $v$, solution to the perturbed equation~\eqref{eq:nls:f}. In section~\ref{section:random}, we recall standard large deviation estimates, as well as an improved bound for radially-symmetric randomized initial data. Then, we address the stability theory in section~\ref{sec:stability}. In the second part of the analysis, we prove the global a priori estimate for $v$. Namely, we obtain almost conservation laws in section~\ref{sec:conservation}, used in section~\ref{sec:double-bootstrap} to perform the double bootstrap argument. 
\subsection*{Notations}
$\sigma$ is a scaling-subcritical exponent, and $s$ a scaling supercritical one. They are linked together by the relation 
\[
s < 1/2 <\sigma < 2s\,.
\]
We shall keep in mind that the nonlinear Duhamel term is in $H^\sigma$, whereas the initial data is in $H^s$. $I$ is the I-operator defined in~\eqref{eq:def:I}, and $N$ is a large dyadic integer. Throughout the analysis, one can keep in mind that the energy is a priori of order 
\[
\mathcal{E}(v)\lesssim N^{2(1-s)}\,.
\]
We denote the Duhamel integral by 
\begin{equation}
    \label{eq:def-duhamel}
\mathcal{I}(\intervalcc{t_0}{t},F) \coloneqq -i\int_{t_0}^t\e^{i(t-t')\Delta}F(t')dt'
\,.
\end{equation}
The nonlinearity is seen as a trilinear operator, and writes 
\begin{equation}
\label{eq:def-nonlinearity}
    \mathcal{N}(u^{(1)},u^{(2)},u^{(3)}) \coloneqq u^{(1)}\overline{u^{(2)}}u^{(3)},\quad \mathcal{N}(u) \coloneqq \mathcal{N}(u,u,u)\,.
\end{equation}
Given a time-interval $J\subseteq\R$, a randomized initial data $f_0^\omega$ corresponding to some $\omega\in\Omega$, and its linear evolution at time $t$ denoted $f^\omega(t)=\e^{it\Delta}f_0^\omega$, we define
\begin{equation}
\begin{split}
\label{eq:ass:data:def}
F^\omega(J)&\coloneqq\norm{\japbrak{\nabla}^sf^\omega}_{L_{t,x}^{10}(J)}+\norm{\japbrak{\nabla}^sf^\omega}_{L_{t,x}^{4}(J)}+\norm{\japbrak{\nabla}^sf^\omega}_{L_{t,x}^{5}(J)}+\norm{\japbrak{\nabla}^sf^\omega}_{L_t^4L_x^{12}(J)}\,,\\
F_\infty^\omega(J)&\coloneqq\norm{f^\omega}_{L_t^\infty L_x^4(J)}+\norm{f^\omega}_{L_t^\infty L_x^{6}(J)}\,,\\
F_2^\omega(J)&\coloneqq \norm{\japbrak{\nabla}If^\omega}_{L_t^2L_x^\infty(J)}+\norm{\japbrak{\nabla}If^\omega}_{L_t^2L_x^6(J)}\,.
\end{split}
\end{equation}
We will often drop the dependence on $\omega$ from the notations. Some large deviation estimates on this terms are proved in section~\ref{section:random}.~\footnote{We might also incorporate the $L_{t,x}^{10/3}$ norm  in the definition of $F^\omega(J)$, which corresponds to the Strichartz admissible pair $(10/3,10/3)$. Indeed, at some point in the analysis, we will need to consider some spacetime slabs $J\times\R^3$ for which this quantity is small.}
In addition, $C$ is a constant that is irrelevant, and that may change from line to line. We may also write $\lesssim$. Finally, we shall denote by $P_K$ the Littlewood Paley multiplier around the dyadic frequency of size $K=2^k$:
\[
\supp \widehat{P_Kf}\subset\set{\xi\in\R^3\mid 1/2K\leq\abs{\xi}\leq 2K}\,.
\]
The counterpart in the physical space is $\chi_j$:
\[
\supp \chi_jf\subset\set{x\in\R^3\mid 2^{j-1}\leq\abs{x}\leq 2^{j+1}}\,.
\]
The Wiener multiplier around $k\in\Z^3$ is denoted by $Q_k$:
\[
\supp \widehat{Q_kf}\subset k+\intervalcc{-1}{1}^3\,.
\]
\begin{center}
\textbf{Acknowledgements}
\end{center}
The author would like to thank Nicolas Burq, Frédéric Rousset, Chenmin Sun and Nikolay Tzvetkov for enlightening discussions.
\section{Preliminaries and probabilistic estimates}
\label{section:random}
\subsection{The free Schrödinger evolution and critical spaces}
\begin{proposition}[Strichartz estimates in dimension $d=3$]
\label{proposition:strichartz}
Let $u_0\in L_x^2(\R^3)$. We have
\[
\norm{\e^{it\Delta}u_0}_{L_t^q(\R;L_x^r(\R^3))}\lesssim\norm{u_0}_{L_x^2(\R^3)}\,,
\]
for any Strichartz admissible pair 
\[
(q,r)\in\intervalcc{2}{+\infty},\quad  s(q,r)=\frac{2}{q}+d\parent{\frac{1}{r}-\frac{1}{2}}=0\,.
\]
\end{proposition}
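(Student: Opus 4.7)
The plan is to follow the classical $TT^*$ approach of Ginibre–Velo for all non-endpoint pairs, and the bilinear argument of Keel–Tao for the endpoint. The first step I would carry out is the \emph{dispersive estimate}
\[
\norm{\e^{it\Delta}u_0}_{L_x^\infty(\R^3)}\lesssim |t|^{-3/2}\norm{u_0}_{L_x^1(\R^3)}\,,
\]
which follows directly from the explicit Gaussian representation of the free Schrödinger kernel. Interpolating this bound against the $L^2$-isometry $\norm{\e^{it\Delta}u_0}_{L_x^2}=\norm{u_0}_{L_x^2}$ through Riesz–Thorin produces the family of $L^{r'}\to L^r$ decay estimates with rate $|t|^{-3(1/2-1/r)}$ for every $r\in\intervalcc{2}{+\infty}$.

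Next, I would reformulate the target Strichartz bound $T\colon L_x^2\to L_t^qL_x^r$ with $Tu_0(t)=\e^{it\Delta}u_0$ as a $TT^*$ estimate, namely boundedness of $TT^*F(t,\cdot)=\int_\R \e^{i(t-s)\Delta}F(s,\cdot)\,ds$ from $L_t^{q'}L_x^{r'}$ into $L_t^qL_x^r$. Plugging the pointwise dispersive decay into this kernel formula and invoking Minkowski in $x$ together with the Hardy–Littlewood–Sobolev inequality in time reduces the matter to the numerical identity $1-2/q=3(1/2-1/r)$, which is exactly the admissibility hypothesis $s(q,r)=0$. This handles every non-endpoint pair, i.e., every $(q,r)$ with $q>2$.

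The principal obstacle is the endpoint $(q,r)=(2,6)$, since the Hardy–Littlewood–Sobolev step degenerates at the limiting exponent. To handle it I would invoke the Keel–Tao bilinear interpolation: one dyadically decomposes the time separation into shells $|t-s|\sim 2^k$ and, on each shell, combines the dispersive decay on one factor with the $L^2$-isometry on the other to obtain an atomic off-diagonal bilinear estimate with a favorable spread of exponents. A bilinear real interpolation between these atomic pieces then recovers the missing $L_t^2 L_x^6$ bound. Once the endpoint is established, every remaining admissible pair can also be recovered by real interpolation between the endpoint and the trivial energy identity $\norm{\e^{it\Delta}u_0}_{L_t^\infty L_x^2}=\norm{u_0}_{L_x^2}$, providing a uniform proof across the full admissible range.
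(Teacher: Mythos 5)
The paper states this proposition without proof, treating it as the classical Strichartz estimate of Ginibre--Velo and Keel--Tao. Your outline --- dispersive decay from the free kernel, $TT^*$ with Hardy--Littlewood--Sobolev for $q>2$, and Keel--Tao bilinear real interpolation for the endpoint $(2,6)$ --- is exactly the standard argument the paper implicitly invokes, and it is correct.
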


\begin{definition}[Functions of bounded 2-variation $V^2$ and atomic space $U^2$]
\label{def-Vp}
Let $J\subseteq\R$ be a time-interval, and $\mathcal{Z}$ be the collection of every finite partitions of $J$. The set of functions with bounded $2$-variation $V^2(J)$ is the set of functions $v:J\to L_x^2(\R^3)$, endowed with the norm
\begin{equation*}
 \norm{v}_{V^2(J)} \coloneqq \underset{\{t_k\}_{k=0}^{K-1}\in\mathcal{Z}}{\sup} \parentbig{\sum_{k=1}^K\norm{v(t_{k})-v(t_{k-1})}_{L_x^2}^2}^\frac{1}{2}.
\end{equation*}
A function $a:J\to L_x^2(\R^3)$ is an atom if there exists a partition $\{t_k\}_{k=0\dots K}$ in $\mathcal{Z}$ and $\{\phi_k\}_{k=0,\dots, K-1}$ some elements in $L_x^2$ such that
\begin{equation*}
a (t) = \sum_{k=1}^K\mathbf{1}_{\intervalco{t_{k-1}}{t_k}}(t)\phi_{k-1},\quad \sum_{k=0}^{K-1} \norm{\phi_k}_{L_x^2}^2 \leq 1.
\end{equation*}
The atomic space $U^2(J)$ is the set of functions $u:J\to L_x^2(\R^3)$ endowed with the norm
\begin{equation}
\label{def-atomic-space} 
\norm{u}_{U^2(J)}\coloneqq \inf\set{\norm{(\lambda_j)}_{\ell^1}\mid u = \sum_{j\geq1}\lambda_j a_j\quad \text{for some $U^2$-atoms $(a_j)$}}.
\end{equation}
\end{definition}
\begin{definition}[Function spaces adapted to the free Schrödinger evolution]
Spaces adapted to the linear propagator $\e^{it\Delta}$ are the Banach spaces endowed with the norms 
\begin{equation*}
\norm{u}_{U_\Delta^2(J)}\coloneqq\norm{\e^{it\Delta}u}_{U^2(J)},\quad \norm{v}_{V_\Delta^2(J)}\coloneqq\norm{\e^{it\Delta}v}_{V^2(J)}.
\end{equation*}
Then, given $\sigma\in\R$, the critical spaces $X^\sigma(J)$ and $Y^\sigma(J)$ are the Banach spaces endowed with the norms
\begin{equation}
\label{def:bourgain-spaces}
\norm{u}_{X^\sigma(J)}^2=\sum_{K\in2^\N}K^{2\sigma}\norm{P_Ku}_{U_\Delta^2(J)}^2,\quad \norm{v}_{Y^\sigma(J)}^2=\sum_{K\in2^\N}K^{2\sigma}\norm{P_Kv}_{V_\Delta^2(J)}^2\,.
\end{equation}
\end{definition}
\begin{proposition}[Embeddings,~\cite{hadac2009} Proposition 2.2 and Corollary 2.6~\footnote{\ The spaces $U_\Delta^2H^\sigma$ and $V_\Delta^2H^\sigma$ are generalizations of the spaces $U_\Delta^2$ and $V_\Delta^2H^\sigma$, with functions $u,v: J\to H^\sigma(\R^3)$ instead of $L_x^2(\R^3)$.}]
\begin{equation}
\label{eq:embedding:U2} 
U_\Delta^2H_x^\sigma\hookrightarrow X^\sigma(J)\hookrightarrow Y^\sigma(J)\hookrightarrow V_\Delta^2H_x^\sigma\hookrightarrow L_t^\infty(J,H_x^\sigma)\,.
\end{equation}
\end{proposition}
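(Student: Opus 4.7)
The plan is to unpack the four embeddings one at a time, relying on atomic decomposition for $U^2$, the square function characterization of $H^\sigma$ via the Littlewood--Paley projectors $P_K$, and the classical fact that $U^2 \hookrightarrow V^2$ on the scalar level.

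First, for $U_\Delta^2 H_x^\sigma \hookrightarrow X^\sigma(J)$, I would reduce to atoms. It suffices to show that if $a(t)=\sum_{k} \mathbf{1}_{[t_{k-1},t_k)}(t)\phi_{k-1}$ is a $U^2(H^\sigma)$-atom (so $\sum_k \|\phi_{k-1}\|_{H^\sigma}^2\le 1$), then $\|a\|_{X^\sigma}\lesssim 1$. Applying $P_K$ commutes with $e^{it\Delta}$, so $P_K a$ is itself (up to a normalization) a $U^2$-atom with $L_x^2$-weights $\|P_K\phi_{k-1}\|_{L_x^2}$, hence $\|P_K a\|_{U_\Delta^2}^2 \lesssim \sum_k \|P_K\phi_{k-1}\|_{L_x^2}^2$. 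Multiplying by $K^{2\sigma}$, summing in $K$ and exchanging the two sums then gives $\sum_K K^{2\sigma}\|P_K a\|_{U_\Delta^2}^2\lesssim \sum_k \|\phi_{k-1}\|_{H^\sigma}^2\le 1$. Extending from atoms to arbitrary $u$ by $\ell^1$-decomposition and using the triangle inequality for the $X^\sigma$ norm (with a Cauchy--Schwarz against the $\ell^2$-in-$K$ structure) closes this step.

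Second, $X^\sigma(J)\hookrightarrow Y^\sigma(J)$ is immediate once we apply the scalar embedding $\|v\|_{V^2}\le \|v\|_{U^2}$ (proved atom-by-atom: the $V^2$ seminorm of an atom is bounded by its $L^2$-coefficient $\ell^2$-sum, which is $\le 1$) to each piece $P_K u$ and square-sum in $K$ against the weight $K^{2\sigma}$.

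Third, for $Y^\sigma(J)\hookrightarrow V_\Delta^2 H_x^\sigma$, the key point is an $\ell^2$-orthogonality swap. For any finite partition $\{t_k\}_{k=0}^{K-1}\in\mathcal{Z}$,
\begin{equation*}
\sum_k \|e^{-it_k\Delta}u(t_k)-e^{-it_{k-1}\Delta}u(t_{k-1})\|_{H^\sigma}^2
\;\lesssim\; \sum_k \sum_{K} K^{2\sigma}\|P_K(e^{-it_k\Delta}u(t_k)-e^{-it_{k-1}\Delta}u(t_{k-1}))\|_{L_x^2}^2,
\end{equation*}
and exchanging the two sums and taking the supremum over $\{t_k\}$ inside yields $\sum_K K^{2\sigma}\|P_K u\|_{V_\Delta^2}^2 = \|u\|_{Y^\sigma}^2$. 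The supremum on the outside then gives $\|u\|_{V_\Delta^2 H_x^\sigma}^2 \lesssim \|u\|_{Y^\sigma}^2$. The only subtle point here is justifying the square function characterization of $H^\sigma$ for the differences, which is the Littlewood--Paley theorem applied frequency-piece by frequency-piece.

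Finally, the embedding $V_\Delta^2 H_x^\sigma\hookrightarrow L_t^\infty(J,H_x^\sigma)$ is the easiest: for any $t\in J$ and any fixed $t_0\in J$, the triangle inequality gives $\|u(t)\|_{H^\sigma}\le \|u(t_0)\|_{H^\sigma}+\|e^{-it\Delta}u(t)-e^{-it_0\Delta}u(t_0)\|_{H^\sigma}$, and the second term is controlled by the partition $\{t_0,t\}$ in the definition of $V^2$. I expect the main (essentially the only) technical obstacle to be the third embedding, where one needs the sum-exchange to be legitimate, i.e.\ a monotone convergence argument to promote the Littlewood--Paley square-function equivalence to arbitrary finite partitions uniformly. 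Since the proposition is a direct quotation from~\cite{hadac2009}, I would in practice just invoke it rather than reproduce this last argument in detail.
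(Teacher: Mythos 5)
The paper offers no proof of this proposition---it is quoted directly from Hadac--Herr--Koch---so your closing remark that one would in practice simply invoke the reference matches the paper exactly, and your sketch (atoms for $U^2$, the scalar $U^2\hookrightarrow V^2$ bound applied blockwise, and the sum--supremum exchange with the Littlewood--Paley characterization of $H^\sigma$) is a correct reconstruction of the standard arguments behind it. The one point to watch is the final embedding: with the paper's definition the $2$-variation vanishes on constants, so your bound $\|u(t)\|_{H^\sigma}\le\|u(t_0)\|_{H^\sigma}+\|u\|_{V_\Delta^2H^\sigma}$ controls only the oscillation, and one needs the value at a reference time (or the normalization $v(\infty)=0$ used in the cited work) to get a genuine norm embedding into $L_t^\infty H_x^\sigma$.
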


\begin{proposition}[Duality (see~\cite{hadac2009} and~\cite{herr2011})]
\label{proposition:dual:U2}
There exists a unique bilinear map $\operatorname{B}:U^2\times V^2\to\C$ such that
\begin{equation*}
v\in V^2 \mapsto \operatorname{B}(\cdot,v)\in(U^2)^*
\end{equation*}
is a surjective isometry, and
\begin{equation*}
\norm{v}_{V^2}=\underset{\norm{u}_{U^2}\leq1}{\sup} \abs{\operatorname{B}(u,v)},\quad \norm{u}_{U^2}=\underset{\norm{v}_{V^2}\leq1}{\sup} \abs{\operatorname{B}(u,v)}.
\end{equation*}
If $\partial_tu\in L_t^1(J;L_x^2(\R^3))$ we have the explicit formula
\begin{equation*}
\operatorname{B}(u,v)=\int_I (\partial_t u\mid v)_{L_x^2(\R^3)}dt.
\end{equation*}
In particular, for the Duhamel term~\eqref{eq:def-duhamel} and when $F\in L_t^1(J;L_x^2(\R^3))$, we have
\begin{equation}
\label{eq:duality:duhamel}
    \norm{\mathcal{I}(J,F)}_{U_\Delta^2}\lesssim\underset{\norm{v}_{V_\Delta^2}\leq1}{\sup}\ \absbig{\iint_{J\times\R}F\overline{v}dxdt}\,.
\end{equation}
\end{proposition}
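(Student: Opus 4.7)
The plan is to follow the standard construction of $U^p$-$V^p$ duality (as in~\cite{hadac2009}): first define $\operatorname{B}$ explicitly on atoms via a discrete integration by parts, bound it via Cauchy-Schwarz, and then invert the map to establish the isometry.

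First, for a $U^2$-atom $a(t)=\sum_{k=1}^K \mathbf{1}_{[t_{k-1},t_k)}(t)\phi_{k-1}$ with $\sum_k\|\phi_{k-1}\|_{L_x^2}^2\leq 1$, and a right-continuous function $v:J\to L_x^2(\R^3)$ of bounded $2$-variation, set
\[
\operatorname{B}(a,v)= -\sum_{k=1}^K \bigl(\phi_{k-1},\, v(t_k)-v(t_{k-1})\bigr)_{L_x^2}\,.
\]
This is the discrete analogue of $-\int_J(u,\partial_tv)_{L_x^2}dt$, which equals $\int_J(\partial_tu,v)_{L_x^2}dt$ for sufficiently smooth compactly supported $u$. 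Cauchy-Schwarz applied to the sequences $(\phi_{k-1})_k$ and $(v(t_k)-v(t_{k-1}))_k$ yields $|\operatorname{B}(a,v)|\leq\|v\|_{V^2}$. Using the atomic definition of $\|\cdot\|_{U^2}$, linearity extends this to $|\operatorname{B}(u,v)|\leq\|u\|_{U^2}\|v\|_{V^2}$ for every finite atomic combination $u$. Since such combinations are dense in $U^2$, this produces a bounded bilinear form and, correspondingly, a contraction $v\in V^2\mapsto \operatorname{B}(\cdot,v)\in(U^2)^*$.

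Second, to obtain the reverse inequality and the surjectivity, I start from $\ell\in (U^2)^*$ and define a candidate $v:J\to L_x^2$ by
\[
(\phi,v(t))_{L_x^2}=-\ell\bigl(\mathbf{1}_{[t_0,t)}\phi\bigr),\quad \phi\in L_x^2.
\]
Because $\mathbf{1}_{[t_0,t)}\phi/\|\phi\|_{L^2}$ is a $U^2$-atom, this pairing is bounded by $\|\ell\|_{(U^2)^*}\|\phi\|_{L_x^2}$, so $v(t)\in L_x^2$. Given any partition $t_0<\cdots<t_K$ and any vectors $\phi_{k-1}\in L_x^2$, the function $a=\sum_k \mathbf{1}_{[t_{k-1},t_k)}\phi_{k-1}$ satisfies $\|a\|_{U^2}\leq (\sum_k \|\phi_{k-1}\|_{L^2}^2)^{1/2}$, hence
\[
\sum_{k=1}^K(\phi_{k-1},v(t_k)-v(t_{k-1}))_{L_x^2}=-\ell(a)\leq \|\ell\|_{(U^2)^*}\Bigl(\sum_k\|\phi_{k-1}\|_{L^2}^2\Bigr)^{1/2}.
\]
Choosing $\phi_{k-1}$ proportional to $v(t_k)-v(t_{k-1})$ and taking supremum over partitions yields $\|v\|_{V^2}\leq\|\ell\|_{(U^2)^*}$. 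Together with the bound from the first step this forces equality of norms, and the identity $\ell=\operatorname{B}(\cdot,v)$ on the dense subspace of atoms gives surjectivity; uniqueness of $\operatorname{B}$ is then automatic from density.

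Third, the explicit formula when $\partial_t u\in L^1_t(J;L_x^2)$ follows by approximating $u$ by step functions adapted to partitions of mesh tending to zero: a summation-by-parts rewriting identifies $\operatorname{B}(u_n,v)$ with Riemann sums for $\int_J(\partial_tu,v)_{L_x^2}dt$, and the $L^1$ assumption on $\partial_tu$ delivers convergence. Estimate~\eqref{eq:duality:duhamel} for the Duhamel term is then immediate: after conjugating by $\e^{-it\Delta}$ the function $w(t)=\e^{-it\Delta}\mathcal{I}(J,F)$ satisfies $\partial_tw=-i\e^{-it\Delta}F\in L^1_t L_x^2$, so the explicit formula identifies $\|\mathcal{I}(J,F)\|_{U_\Delta^2}$ with the supremum of $|\iint F\overline{v}\,dxdt|$ over the unit ball of $V_\Delta^2$. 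The main obstacle in this plan is the careful choice of right- versus left-continuous representatives of $V^2$-functions, together with endpoint conventions, so that the discrete integration-by-parts identity holds cleanly and so that the density arguments for $U^2$ (where atoms have right-open subintervals) are compatible with the topology of $V^2$; once these conventions are fixed, every remaining step is Cauchy-Schwarz plus density.
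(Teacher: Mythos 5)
The paper does not actually prove this proposition: it is quoted, with a citation, from the $U^2$--$V^2$ duality theory of~\cite{hadac2009} and~\cite{herr2011}, so there is no in-paper argument to compare against. Your reconstruction follows precisely the standard proof in those references: define $\operatorname{B}$ on atoms by a discrete summation by parts, bound it by Cauchy--Schwarz to get $\abs{\operatorname{B}(u,v)}\leq\norm{u}_{U^2}\norm{v}_{V^2}$, recover $v$ from a functional $\ell\in(U^2)^*$ through $(\phi,v(t))_{L^2}=-\ell\parent{\mathbf{1}_{\intervalco{t_0}{t}}\phi}$ and the choice $\phi_{k-1}\sim v(t_k)-v(t_{k-1})$ to obtain the reverse inequality and surjectivity, then pass to the explicit formula by step-function approximation when $\partial_tu\in L_t^1L_x^2$, and deduce~\eqref{eq:duality:duhamel} by conjugating the Duhamel term with $\e^{-it\Delta}$. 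All of these steps are correct; your sign convention on atoms is consistent with the stated formula $\operatorname{B}(u,v)=\int(\partial_tu\mid v)dt$, and the telescoping shows $\operatorname{B}$ is well defined on step functions, so the density extension is legitimate. The one genuine subtlety is the one you flag: with the paper's Definition~\ref{def-Vp} the $V^2$ quantity is only a seminorm (it vanishes on constants), so the ``surjective isometry'' holds modulo constants, equivalently on the normalized (e.g.\ right-continuous, $v(t_0)=0$) subspace used in~\cite{hadac2009}; your construction automatically produces the representative with $v(t_0)=0$, so once that convention is fixed your argument yields the full statement.
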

\begin{proposition}[Transferred linear and bilinear estimates, Lemma 3.3 in~\cite{benyi2015}]
\label{prop:transfer}
Let $(q,r)$ be an admissible pair as in Proposition~\ref{proposition:strichartz}. We have
\begin{equation}
\label{eq:strichartz:embedding}
\norm{u}_{L_t^q(\R ; L_x^r(\R^3))}\lesssim\norm{u}_{U_\Delta^2}\,.
\end{equation}
Let $K,M$ be two dyadic integers. The  bilinear estimate from Bourgain 
\begin{equation}
\label{eq:bili}
\norm{\parent{\e^{it\Delta}P_Ku_0}\parent{ \e^{it\Delta}P_Mu_0}}_{L_{t,x}^2(\R\times\R^3)}\lesssim KM^{-\frac{1}{2}}\norm{P_{\leq K}u_0}_{L_x^2}\norm{P_Mv_0}_{L_x^2}
\end{equation}
has transferred versions to the context of the spaces $U^2$ and $V^2$, that read 
\begin{equation}
\label{eq:bili:transf}
    \begin{split}
\norm{P_KuP_Mv}_{L_{t,x}^2(\R\times\R^3)}&\lesssim KM^{-\frac{1}{2}+}\norm{P_Kv}_{V_\Delta^2}\norm{P_Mv}_{V_\Delta^2}\,,\\ \norm{P_KuP_Mv}_{L_{t,x}^2(\R\times\R^3)}&\lesssim KM^{-\frac{1}{2}}\norm{P_Kv}_{U_\Delta^2}\norm{P_Mv}_{U_\Delta^2}
\,.
\end{split}
\end{equation}
\end{proposition}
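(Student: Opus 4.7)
The plan is to prove each of the three estimates by transference, reducing them to the corresponding property of the free Schr\"odinger propagator via the atomic structure of $U^2$ and the embedding $V^2\hookrightarrow U^p$ that holds for every $p>2$. The content of the proposition is simply that Proposition~\ref{proposition:strichartz} and Bourgain's inequality~\eqref{eq:bili} can be upgraded from pure free evolutions to arbitrary elements of $U_\Delta^2$ and $V_\Delta^2$.

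First, for~\eqref{eq:strichartz:embedding}, the $\ell^1$ atomic decomposition~\eqref{def-atomic-space} and the triangle inequality reduce the question to bounding a single $U_\Delta^2$-atom of the form $a(t)=\sum_{k=1}^K\mathbf{1}_{\intervalco{t_{k-1}}{t_k}}(t)\e^{it\Delta}\phi_{k-1}$ with $\sum_k\norm{\phi_{k-1}}_{L_x^2}^2\leq 1$. On each slab Proposition~\ref{proposition:strichartz} gives $\norm{a}_{L_t^qL_x^r(\intervalco{t_{k-1}}{t_k})}\lesssim\norm{\phi_{k-1}}_{L_x^2}$, and disjointness of the slabs together with $q\geq 2$ and $\ell^2\hookrightarrow\ell^q$ lets one sum these bounds to obtain $\norm{a}_{L_t^qL_x^r(\R)}\lesssim 1$.

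For the $U_\Delta^2$ bilinear bound on the second line of~\eqref{eq:bili:transf}, I would decompose $P_Ku$ and $P_Mv$ into $\ell^1$-sums of atoms. On the intersection of any two slabs the product of the two atoms reduces to a product of free evolutions of $L_x^2$-data with frequency scales $K$ and $M$, to which~\eqref{eq:bili} applies with a constant independent of the sub-interval; squaring the $L_{t,x}^2$-norm and factorising the double sum over slabs into the product of the atomic $\ell^2$-bounds yields $\norm{a^u a^v}_{L_{t,x}^2}\lesssim KM^{-1/2}$ for any pair of atoms, and the $\ell^1$ atomic decomposition promotes this to arbitrary elements of $U_\Delta^2$. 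The $V_\Delta^2$ version in the first line of~\eqref{eq:bili:transf} is then extracted from the $U_\Delta^2$ one: interpolating with the trivial $L_t^\infty L_x^2$ bound produces a $U_\Delta^p$ bilinear estimate for some $p$ slightly above $2$, and the embedding $V_\Delta^2\hookrightarrow U_\Delta^p$ closes the argument. The $M^{0+}$ loss arising from this interpolation is precisely the ``$+$'' in the exponent $-\tfrac{1}{2}+$.

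The only delicate step is this last interpolation, where one must absorb the $U^p$-loss as $p\downarrow 2$ without sacrificing the frequency gain coming from Bourgain's inequality. The relevant bookkeeping is developed in full generality in~\cite{hadac2009} and~\cite{herr2011}, whose machinery this proof directly imports; beyond organising the atomic decompositions the argument reduces to invoking known embeddings and interpolation identities.
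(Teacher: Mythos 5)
The paper does not prove this proposition: it is quoted verbatim from Lemma~3.3 of~\cite{benyi2015}, which in turn relies on the $U^p/V^p$ machinery of~\cite{hadac2009,herr2011}. So there is no in-paper argument to compare against; I am assessing your sketch against the standard transference argument.

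Your treatments of~\eqref{eq:strichartz:embedding} and of the $U_\Delta^2$ line of~\eqref{eq:bili:transf} are correct and are the expected ones: by the $\ell^1$-homogeneity of the atomic norm~\eqref{def-atomic-space}, it suffices to bound a single atom (respectively a product of two atoms); after conjugating by $\e^{-it\Delta}$, an atom is a step function of free evolutions on disjoint slabs, the slabwise estimate (free Strichartz, respectively~\eqref{eq:bili} restricted to $I_k\cap J_l$) holds with a constant independent of the slab, and the slabs being pairwise disjoint one simply sums --- using $\ell^2\hookrightarrow\ell^q$, $q\ge 2$, in the linear case and summing $L^2$-norms squared over the common refinement in the bilinear case. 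No loss occurs at this stage, as you state.

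The $V_\Delta^2$ line is the only subtle point, and your description of it is not quite the mechanism that is actually used. There is no genuine interpolation theory on the $U^p$ scale that lets you ``interpolate the $U^2$ estimate with a crude $L_t^\infty L_x^2$ bound to produce a $U^p$ bilinear estimate for $p$ slightly above $2$.'' The correct tool, and the one the references you cite actually invoke, is the Hadac--Herr--Koch interpolation lemma (Proposition~2.20 in~\cite{hadac2009}): if a linear map is bounded on $U^2$ with constant $C_2$ and on $U^q$ for some \emph{fixed} $q>2$ with constant $C_q\geq C_2$, then it is bounded on (right-continuous) $V^2$ with constant $\lesssim C_2\bigl(1+\ln(C_q/C_2)\bigr)$. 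Here one applies this once per factor, feeding in as the $U^q$ bound a crude H\"older--Bernstein--Strichartz estimate that loses a fixed power of $M$; then $\ln(C_q/C_2)\sim\ln M$, which is absorbed into $M^{0+}$. In particular one never passes through an intermediate $U^p$ bilinear estimate, and the embedding $V^2\hookrightarrow U^p$ for $p>2$ has a constant that diverges as $p\downarrow 2$, which your sketch does not track. Since you do flag this as the delicate step and point to the right references, this is an imprecision in the narration rather than a fatal gap, but the step as literally written would not close.
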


\begin{lemma}[Time continuity and sub-additivity, see Lemma A.4 and A.8 in~\cite{benyi2015}]
\label{lemma:U2-continuity}
Let $J=\intervalco{a}{b}$ and $u\in X^\sigma(J)\cap C(J;H_x^\sigma(\R^3))$. The mapping
\[
t\in J\to \norm{u}_{X^\sigma\intervalco{a}{t}}
\]
is continuous. Moreover, given a partition $\disp J=\bigcup_{k=1}^L J_k$, we have
\[
\norm{u}_{X^\sigma(J)}\leq \sum_{k=1}^L\norm{u}_{X^\sigma(J_k)}\,.
\]
\end{lemma}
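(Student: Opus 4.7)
My plan is to prove the two claims separately, treating sub-additivity first since it furnishes the key tool for continuity.

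\textbf{Sub-additivity.} Because the $X^\sigma$-norm is an $\ell^2_K$-weighted Littlewood-Paley sum of $U^2_\Delta$-norms, the Minkowski inequality (exchanging the sum over dyadic blocks $K$ with the sum over partition pieces $J_k$) reduces the inequality to the frequency-localised statement
\[
\|P_Ku\|_{U^2_\Delta(J)} \;\leq\; \sum_{k=1}^L \|P_Ku\|_{U^2_\Delta(J_k)}.
\]
For this I would take, for each $k$ and every $\varepsilon > 0$, a near-optimal atomic decomposition of the profile on $J_k$,
\[
\e^{-it\Delta}P_Ku\big|_{J_k} \;=\; \sum_j \lambda_j^{(k)} a_j^{(k)}, \qquad \sum_j |\lambda_j^{(k)}| \leq \|P_Ku\|_{U^2_\Delta(J_k)} + \varepsilon.
\]
Extending every atom $a_j^{(k)}$ to all of $J$ by the zero function outside $J_k$ preserves the atomic property (zero is an admissible step-value, and the partition is simply augmented by the endpoints of $J$). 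Summing the extended atoms over both $j$ and $k$ reconstructs $\e^{-it\Delta}P_Ku$ on $J$, so that the infimum in the definition of the $U^2_\Delta$-norm on $J$ is bounded by $\sum_k \|P_Ku\|_{U^2_\Delta(J_k)} + L\varepsilon$; sending $\varepsilon \to 0$ concludes. Propagation to $X^\sigma$ is then just Minkowski in $\ell^2_K$.

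\textbf{Continuity.} Sub-additivity combined with monotonicity of the norm under interval inclusion gives, for $a \leq s < t < b$,
\[
0 \;\leq\; \|u\|_{X^\sigma[a,t)} - \|u\|_{X^\sigma[a,s)} \;\leq\; \|u\|_{X^\sigma[s,t)}.
\]
The problem thus reduces to the quantitative decay $\|u\|_{X^\sigma[s,t)} \to 0$ as $|t-s| \to 0$. I would first use the convergence of $\sum_K K^{2\sigma}\|P_Ku\|_{U^2_\Delta(J)}^2$ to truncate the Littlewood-Paley sum to finitely many dyadic blocks, absorbing the tail in $\varepsilon$. On each retained frequency block, I would approximate the profile $\e^{-it\Delta}P_Ku$ in $U^2$-norm by a finite atomic combination, using that $U^2$ is the completion of such combinations. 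On a sub-interval $[s,t)$ short enough to contain at most one breakpoint of the approximation, the $U^2$-contribution reduces to a single $L^2$-step whose size is controlled by the $H^\sigma$-continuity of $u$ through the $V^2_\Delta \hookrightarrow L^\infty_tH^\sigma$ embedding. A standard three-$\varepsilon$ argument, summed over the finitely many retained blocks, then delivers continuity.

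\textbf{Main obstacle.} The genuinely delicate point is that the $U^2_\Delta$-norm does not soft-continuously see the length of the interval: for a pure free evolution one has $\|\e^{it\Delta}u_0\|_{U^2_\Delta[s,t)} = \|u_0\|_{L^2_x}$ irrespective of $|t-s|$, so no compactness argument alone produces the needed decay of $\|u\|_{X^\sigma[s,t)}$ on shrinking intervals. The continuity claim must therefore be powered by the $C(J;H^\sigma_x)$ hypothesis, which is extracted via the finite-atomic approximation: continuity of $u$ in $H^\sigma_x$ quantifies the size of an individual jump in $L^2_x$, and it is precisely this quantitative control that allows one to convert the length of a shrinking interval into smallness of the $U^2_\Delta$-norm. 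Reconciling this mechanism cleanly with the $\ell^2_K$-Littlewood-Paley structure of $X^\sigma$, without losing uniformity in $K$ during the truncation, is the step I expect to require the most care.
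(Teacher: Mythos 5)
Your sub-additivity argument is correct, and it is essentially the standard proof of the result the paper simply cites (Lemma A.8 of \cite{benyi2015}): reduce to a fixed Littlewood--Paley block by Minkowski in $\ell^2_K$, take near-optimal atomic decompositions of the twisted profile on each $J_k$, extend the atoms by zero (which preserves the atom property and the $\ell^2$ bound on the step values), and recombine. No objection there.

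The continuity half, however, has a genuine gap. Your reduction bounds $\norm{u}_{X^\sigma[a,t)}-\norm{u}_{X^\sigma[a,s)}$ by $\norm{u}_{X^\sigma[s,t)}$ and then aims to show that $\norm{u}_{X^\sigma[s,t)}\to 0$ as $\abs{t-s}\to 0$; but that statement is false for admissible $u$, as your own ``main obstacle'' paragraph essentially shows: $u(t)=\e^{it\Delta}\phi$ lies in $X^\sigma(J)\cap C(J;H^\sigma_x)$ and has $\norm{u}_{X^\sigma[s,t)}=\norm{\phi}_{H^\sigma}$ on every nonempty subinterval. The subsequent sketch does not repair this: after truncating in $K$ and approximating by a finite atomic combination, the $U^2_\Delta$-norm of $u$ on a short interval containing no breakpoint is bounded below by the $L^2_x$-size of the corresponding step value, i.e.\ by the size of $u(s)$ itself, whereas the $H^\sigma$-modulus of continuity of $u$ only controls increments $u(t)-u(s)$; so no smallness of $\norm{u}_{X^\sigma[s,t)}$ can be extracted this way. (The paper itself relies on the opposite fact in the proof of Lemma~\ref{lemma:short-stability}: the restriction norm on a shrinking interval tends to roughly $\norm{w(t_0)}_{H^\sigma}$, not to zero.) The missing idea is to compare $u$ on the larger interval not with its bare restriction but with the norm-preserving extension $w_s$ that continues $u$ past the cut time $s$ by the free flow of $u(s)$ (in twisted variables, extend each atom by its last step value, which changes neither the atom property nor the $U^2_\Delta$-norm). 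The triangle inequality then gives $0\leq \norm{u}_{X^\sigma[a,t)}-\norm{u}_{X^\sigma[a,s)}\leq \normb{u-\e^{i(\cdot-s)\Delta}u(s)}_{X^\sigma[s,t)}$, and it is this quantity that tends to zero as the interval shrinks: writing an atomic decomposition, each atom is a step function, so $a_j-a_j(s)$ vanishes identically on $[s,t)$ once $[s,t)$ contains none of its breakpoints, and one concludes by dominated convergence in $j$ together with the $\ell^2_K$ tail truncation you describe; the hypothesis $u\in C(J;H^\sigma_x)$ enters precisely to rule out a jump of the profile at the endpoint (without it, right-continuity of $t\mapsto\norm{u}_{X^\sigma[a,t)}$ genuinely fails for a step-type $U^2_\Delta$ function). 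Note also that the paper gives no in-house proof to compare with: it refers to Lemmas A.4 and A.8 of \cite{benyi2015}, whose argument for continuity follows the extension-and-compare scheme just outlined rather than the decay claim you propose.
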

\subsection{Large deviation estimates for the free Schrödinger evolution}
First, we recall the following probabilistic decoupling estimate. It expresses the gain of integrability that emerge in probabilistic averaging effects, due to the cancelation of interferences.
\begin{lemma}[Lemma 3.1 in~\cite{burq-tzvetkov-2008I}]
\label{lemma:Khinchin}
Let $\{g_n\}$ be a sequence of real valued, zero-mean and independent random variables with distribution $\{\mu_n\}$ on a probability space $\parent{\Omega,\mathcal{A},\mathbb{P}}$. Assume that there exists $c>0$ such that for any $\gamma\in\R$ and any $n\geq0$ we have
 \[\absbig{\int_\R\e^{\gamma x}d\mu_n(x)}\leq \e^{c\gamma^2}\,.\]
Then, there exists $C>0$ such that for every $2\leq p\leq \infty$ and for every sequence of complex numbers $\{c_n\}$ in $\ell^2(\N;\C)$ it holds that
\begin{equation}
    \label{eq:Khinchin}
    \normbig{\sum_{n\in\N} c_ng_n(\omega)}_{L_\omega^p(\Omega)}\leq C\sqrt{p}\parentbig{\sum_{n\in\N}\abs{c_n}^2}^{1/2}\,.
\end{equation}
\end{lemma}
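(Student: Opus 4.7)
The plan is the classical sub-Gaussian concentration argument: the hypothesis on the Laplace transforms $\int_\R \e^{\gamma x}d\mu_n(x)$ expresses that each $g_n$ is sub-Gaussian with parameter $c$, and independence promotes this to a sub-Gaussian tail for the weighted sum $F(\omega) := \sum_n c_n g_n(\omega)$. The $L^p$ bound with its characteristic $\sqrt{p}$-growth then drops out by integrating the resulting Gaussian tail against $p\,t^{p-1}\,dt$.

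First I would reduce to real coefficients by splitting $c_n = a_n + i b_n$ and setting $A := \sum_n a_n g_n$ and $B := \sum_n b_n g_n$, both real-valued. For any $\gamma\in\R$, independence of the $g_n$ together with the hypothesis yields
\begin{equation*}
\mathbb{E}\bigl[\e^{\gamma A}\bigr] = \prod_n \mathbb{E}\bigl[\e^{\gamma a_n g_n}\bigr] \leq \prod_n \e^{c\gamma^2 a_n^2} = \e^{c\gamma^2 \norm{a}_{\ell^2}^2},
\end{equation*}
and the exponential Markov inequality optimized over $\gamma>0$ gives the sub-Gaussian tail $\mathbb{P}(A>t)\leq \exp(-t^2/(4c\norm{a}_{\ell^2}^2))$. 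The symmetric lower tail comes from the same argument with $-\gamma$, an analogous bound holds for $B$, and since $\norm{a}_{\ell^2}^2+\norm{b}_{\ell^2}^2 = \norm{c}_{\ell^2}^2$, combining via $\{|F|>t\}\subset\{|A|>t/\sqrt{2}\}\cup\{|B|>t/\sqrt{2}\}$ produces a bound of the form $\mathbb{P}(|F|>t)\leq C\exp(-t^2/(Cc\,\norm{c}_{\ell^2}^2))$ for some absolute constant $C$.

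Integrating this tail through the layer-cake formula
\begin{equation*}
\norm{F}_{L_\omega^p(\Omega)}^p = p\int_0^\infty t^{p-1}\mathbb{P}(|F|>t)\,dt
\end{equation*}
and changing variables to reduce the right-hand side to a Gamma function yields an upper bound of order $C^p\, p\,\norm{c}_{\ell^2}^p\,\Gamma(p/2)$; Stirling's asymptotics $\Gamma(p/2)^{1/p}\lesssim \sqrt{p}$ then give exactly $\norm{F}_{L_\omega^p}\leq C\sqrt{p}\,\norm{c}_{\ell^2}$ for $2\leq p<\infty$, while the case $p=\infty$ is trivial since the right-hand side of~\eqref{eq:Khinchin} is infinite. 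The only subtle point is the passage from the real-variable hypothesis on the Laplace transforms to a bound for a sum with complex coefficients, which is why one splits $c_n$ into its real and imaginary parts before applying exponential Markov; the rest is a textbook Hoeffding/Bernstein-type concentration estimate. The essential reason for using the Laplace-transform hypothesis rather than a direct second-moment estimate is precisely that one must track how the constant depends on $p$: only the uniform sub-Gaussian control produces the sharp growth $C\sqrt{p}$ in the $L^p$ norm as $p\to\infty$, which is what later makes it possible to convert $L_\omega^p$ bounds into exponential large-deviation estimates by optimizing $p$.
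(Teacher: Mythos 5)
Your argument is correct and is essentially the standard proof of this lemma: the paper itself gives no proof but cites Lemma 3.1 of Burq--Tzvetkov, whose argument is exactly your route (reduce to real coefficients, use independence and the sub-Gaussian Laplace-transform hypothesis with exponential Chebyshev to get a Gaussian tail, then integrate via the layer-cake formula and Stirling to extract the $C\sqrt{p}$ growth). The only cosmetic omission is the routine remark that for an infinite sequence $\{c_n\}$ one first proves the bound for finite partial sums and passes to the limit, which does not affect the substance.
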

From this, we obtain the general large deviation estimate, on which rely all the estimates stated in this section. 
\begin{lemma}[Large deviation estimate]
\label{lemma:large:deviation}
Let $F$ be a real valued measurable function on a probability space $\parent{\Omega,\mathcal{A},\mathbb{P}}$. Assume that there exists some constants $C_0>0$, $M>0$ and $p_0\geq1$ such that for all $p\geq p_0$ we have
\begin{equation}
\label{eq:moment}
\norm{F}_{L_\omega^p(\Omega)}\leq C_0\sqrt{p}M\,.
\end{equation}
Then, there exists $C=C(C_0)$ and $c=c(C_0)>0$ independent of $M$, such that for all $\lambda>0$,
\begin{equation}
    \label{eq:large-deviation}
    \mathbb{P}\parent{\set{\omega\in\Omega\mid \abs{F(\omega)}>\lambda}}\leq C\exp\parent{-c\lambda^2M^{-2}}\,.
\end{equation}
\end{lemma}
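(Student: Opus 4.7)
The plan is to run the standard Chernoff-type optimization: combine Chebyshev's inequality applied to $|F|^p$ with the moment bound \eqref{eq:moment}, and then choose $p$ as a function of $\lambda$ to make the resulting tail bound Gaussian.

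More concretely, I would first observe that for any $p \geq p_0$, Markov's inequality gives
\[
\mathbb{P}\bigl(\set{\abs{F}>\lambda}\bigr) \leq \lambda^{-p}\norm{F}_{L_\omega^p}^p \leq \parentbig{\frac{C_0 \sqrt{p}\, M}{\lambda}}^{p}.
\]
The right-hand side becomes small once $C_0 \sqrt{p}\, M/\lambda < 1$, so I would optimize by choosing the largest $p$ for which this ratio is a fixed fraction less than $1$. A convenient choice is $p = \lambda^2/(e\, C_0^2 M^2)$, which yields $C_0\sqrt{p}\,M/\lambda = e^{-1/2}$ and hence
\[
\mathbb{P}\bigl(\set{\abs{F}>\lambda}\bigr) \leq e^{-p/2} = \exp\parentbig{-\frac{\lambda^2}{2e\,C_0^2\, M^2}},
\]
so that $c = 1/(2e\,C_0^2)$ (depending only on $C_0$) works in \eqref{eq:large-deviation}.

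The only subtlety is that this choice of $p$ is admissible only when $p \geq p_0$, i.e.\ when $\lambda^2 \geq e\, p_0\, C_0^2\, M^2$. In the complementary regime $\lambda^2 < e\, p_0\, C_0^2\, M^2$ the desired bound $C\exp(-c\lambda^2 M^{-2})$ is bounded below by the positive constant $\exp(-c\,e\,p_0\,C_0^2)$, which depends only on $C_0$ and $p_0$; enlarging the prefactor $C$ to absorb this constant makes \eqref{eq:large-deviation} trivially true in that range. There is no real obstacle here, since the argument is a classical Chernoff bound; the only point requiring a moment of care is this separation of the small-$\lambda$ regime so that the constants $C, c$ end up independent of $M$ as claimed.
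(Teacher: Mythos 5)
Your Chernoff-type argument is correct: Markov's inequality with the moment bound, the choice $p\sim \lambda^2/(e\,C_0^2M^2)$, and the separate treatment of the small-$\lambda$ regime (absorbed into the prefactor $C$) is exactly the standard proof of this lemma, which the paper states without proof as a known consequence of the Khinchin-type bound (it goes back to Burq--Tzvetkov). No gaps; your remark that $C$ also depends on $p_0$ is accurate and harmless, since $p_0$ is fixed in all applications.
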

In the following lemma, we recall the ~\textit{probabilistic} Strichartz estimates, that provide improved global spacetime bounds by loosening the Strichartz admissibility condition.
\begin{lemma}[Improved Strichartz estimates]
\label{lemma:rand-zero}
Let $(q,r)$ be a Strichartz admissible pair, i.e $s(q,r)=0$. For all $\widetilde{r}\geq r$, there exists $C=C(q,\widetilde{r})>0$ and $c=c(q,r)>0$ such that for all $f_0\in H^s(\R^3)$ and $\lambda>0$, we have
\begin{equation}
\mathbb{P}\parent{\set{\omega\in\Omega\mid\norm{\japbrak{\nabla}^s\e^{it\Delta}f_0^\omega}_{L_t^q(\R;L_x^{\widetilde{r}}(\R^3)}>\lambda}}\leq C\exp\parent{-c\lambda^2\norm{f_0}_{H^s}^{-2}}\,.
\end{equation}
\end{lemma}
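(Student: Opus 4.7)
The plan is to first establish a $p$-th moment bound of the form
\[
\normb{\norm{\japbrak{\nabla}^s\e^{it\Delta}f_0^\omega}_{L_t^q(\R;L_x^{\widetilde{r}}(\R^3))}}_{L_\omega^p(\Omega)}\leq C\sqrt{p}\,\norm{f_0}_{H^s}\,,
\]
valid for all $p$ large enough, and then to plug this estimate into Lemma~\ref{lemma:large:deviation} with $M=\norm{f_0}_{H^s}$ to extract the announced Gaussian tail bound.

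To prove the moment bound I would fix $p\geq\max(q,\widetilde{r})$ and apply Minkowski's inequality to move the $L_\omega^p$ norm inside the $L_t^q L_x^{\widetilde{r}}$ norm. Writing the Wiener decomposition $f_0^\omega=\sum_{k\in\Z^3}g_k(\omega)Q_kf_0$ and applying Khintchine's inequality (Lemma~\ref{lemma:Khinchin}) pointwise in $(t,x)$, the quantity $\norm{\japbrak{\nabla}^s\e^{it\Delta}f_0^\omega(t,x)}_{L_\omega^p}$ is controlled by $C\sqrt{p}$ times the $\ell_k^2$ square function of the individual blocks $h_k(t,x):=\japbrak{\nabla}^s\e^{it\Delta}Q_kf_0(t,x)$.

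It then remains to estimate $\norm{(\sum_k|h_k|^2)^{1/2}}_{L_t^q L_x^{\widetilde{r}}}$. Since $q,\widetilde{r}\geq 2$, two further applications of Minkowski pass the $\ell_k^2$ sum to the outside, yielding $\parent{\sum_k\norm{h_k}_{L_t^q L_x^{\widetilde{r}}}^2}^{1/2}$. The key step is then a unit-scale Bernstein inequality: because $Q_kf_0$ has Fourier support in a unit cube, one has $\norm{h_k(t,\cdot)}_{L_x^{\widetilde{r}}}\lesssim \norm{h_k(t,\cdot)}_{L_x^r}$ uniformly in $k$ and $t$. The standard Strichartz estimate (Proposition~\ref{proposition:strichartz}) applied to the admissible pair $(q,r)$ then gives $\norm{h_k}_{L_t^q L_x^r}\lesssim \norm{\japbrak{\nabla}^s Q_kf_0}_{L_x^2}$, and the almost-orthogonality of the unit-cube Wiener decomposition in $H^s$ sums these blocks up to $\norm{f_0}_{H^s}$.

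Combining these steps produces the desired moment estimate for every $p\geq p_0:=\max(q,\widetilde{r})$, after which Lemma~\ref{lemma:large:deviation} immediately delivers the conclusion. No serious obstacle is anticipated, since this is the now-standard pairing of Khintchine decoupling with frequency-localized Strichartz. The only delicate points are the Minkowski swaps, which require $p,q,\widetilde{r}\geq 2$, and the unit-scale Bernstein step, which rests crucially on the unit-cube Fourier support of the multipliers $Q_k$ and is precisely what permits the spatial exponent to be upgraded from $r$ to an arbitrary $\widetilde{r}\geq r$ without loss.
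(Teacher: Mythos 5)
Your proposal is correct and follows essentially the same route as the paper's (sketched) proof: Minkowski to bring the $L_\omega^p$ norm inside, Khintchine decoupling from Lemma~\ref{lemma:Khinchin}, unit-scale Bernstein on each Wiener block to upgrade $L_x^r$ to $L_x^{\widetilde r}$, the Strichartz estimate of Proposition~\ref{proposition:strichartz} on each block, almost-orthogonality to sum in $k$, and finally Lemma~\ref{lemma:large:deviation}. The only difference is that you spell out the intermediate Minkowski swaps and the square-function summation, which the paper delegates to the references.
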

\begin{proof}
The proof is standard, and we refer to~\cite{benyi2015,benyi2015-local}. To prove~\eqref{eq:moment} for $p>\max(q,\widetilde{r})$, we apply Minkowski's inequality and then the decoupling estimate~\eqref{eq:Khinchin}. Then, we apply the unit-scale Bernstein estimate on each frequency bloc, that yields a bound on the Fourier multiplier $Q_k$ from $L_x^r$ to $L_x^{\widetilde{r}}$, uniformly in $k$. We conclude by applying the Strichartz estimate with $(q,\widetilde{r})$.
\end{proof}
Next, we state a uniform in time large deviation estimate for the linear evolution of randomized initial data.  
\begin{lemma}
\label{lemma:large-deviation-infty}
Let $f_0$ in $H^s$ for some $s>0$. For all $q>2$, there exists $C,c>0$ such that for all $\lambda>0$,
\[
\mathbb{P}\parent{\set{\omega\in\Omega\mid \norm{\e^{it\Delta}f_0^\omega}_{L_t^\infty(\R;L_x^q(\R^3))}>\lambda}}\leq C\exp\parent{-c\lambda^2\norm{f_0}_{H_x^s}^{-2}}\,.
\]
\end{lemma}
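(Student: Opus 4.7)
The strategy parallels that of the probabilistic Strichartz estimate (Lemma~\ref{lemma:rand-zero}). By Lemma~\ref{lemma:large:deviation}, the desired exponential tail bound reduces to establishing a Gaussian moment estimate
\[
\normbig{\norm{\e^{it\Delta}f_0^\omega}_{L_t^\infty(\R;L_x^q(\R^3))}}_{L_\omega^p(\Omega)} \leq C\sqrt{p}\,\norm{f_0}_{H_x^s}
\]
valid for all sufficiently large $p$. The main obstruction compared with Lemma~\ref{lemma:rand-zero} is that the essential supremum in $L_t^\infty$ cannot be exchanged with $L_\omega^p$ via Minkowski's inequality, so we cannot directly invoke Khinchin's decoupling.

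To bypass this, I would first trade $L_t^\infty$ for a finite $L_t^{p_0}$-norm using the fractional Sobolev embedding in time $W_t^{\alpha,p_0}(\R;L_x^q) \hookrightarrow L_t^\infty(\R;L_x^q)$, valid whenever $\alpha p_0 > 1$. Setting $\alpha = s/2$, choose $p_0$ large enough so that $p_0 > 2/s$ and so that the Strichartz admissible exponent $\widetilde r = 6p_0/(3p_0-4)$ satisfies $\widetilde r \leq q$; this is possible because $\widetilde r\to 2^+$ as $p_0\to\infty$ and $q > 2$ by assumption. On the free Schrödinger flow the symbol $\tau = -\abs{\xi}^2$ converts fractional time-derivatives into spatial ones, so that $\japbrak{D_t}^{s/2}\e^{it\Delta}f_0^\omega$ is controlled by $\e^{it\Delta}\japbrak{\nabla}^sf_0^\omega$, yielding the deterministic reduction
\[
\norm{\e^{it\Delta}f_0^\omega}_{L_t^\infty L_x^q} \lesssim \norm{\e^{it\Delta}\japbrak{\nabla}^sf_0^\omega}_{L_t^{p_0}L_x^q}\,.
\]

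From this point, the proof follows the pattern of Lemma~\ref{lemma:rand-zero}. Take the $L_\omega^p$ norm on both sides with $p \geq \max(p_0,q)$ and apply Minkowski's inequality twice, first to exchange $L_\omega^p$ with $L_t^{p_0}L_x^q$ and then to bring $\ell_k^2$ inside $L_t^{p_0}L_x^q$. Khinchin's inequality (Lemma~\ref{lemma:Khinchin}) applied to the randomized sum $f_0^\omega = \sum_{k\in\Z^3}g_k(\omega)Q_kf_0$ extracts the factor $\sqrt{p}$ and reduces the estimate to an $\ell_k^2$ sum of spacetime norms of the frequency pieces. On each block, the unit-scale Bernstein inequality (uniform in $k$) trades $L_x^q$ for the Strichartz exponent $L_x^{\widetilde r}$, the deterministic Strichartz estimate bounds $\norm{\e^{it\Delta}\japbrak{\nabla}^s Q_kf_0}_{L_t^{p_0}L_x^{\widetilde r}}$ by $\norm{\japbrak{\nabla}^s Q_kf_0}_{L_x^2}$, and summation in $\ell_k^2$ reassembles $\norm{f_0}_{H_x^s}$. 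Lemma~\ref{lemma:large:deviation} then delivers the exponential bound.

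The principal technical obstacle is the first step: exchanging $L_t^\infty$ for $L_t^{p_0}$ costs $s/2$ fractional time-derivatives, which along the Schrödinger evolution become exactly $s$ spatial derivatives. The hypothesis $f_0\in H^s$ with $s > 0$ provides precisely this regularity, and strict positivity of $s$ is essential — the argument breaks down for merely $L^2$ data, consistent with the fact that no such uniform-in-time probabilistic estimate can be expected without some smoothing.
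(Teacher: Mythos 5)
Your argument is correct and follows essentially the same route as the paper, which invokes Lemma 5.15 of Dodson–L\"uhrmann–Mendelson: a Sobolev embedding in time to pass from $L_t^\infty$ to $L_t^{p_0}$, the observation that on the Schr\"odinger characteristic $\tau=-\abs{\xi}^2$ fractional time-derivatives trade for twice as many spatial derivatives, and then the probabilistic Strichartz estimate (Lemma~\ref{lemma:rand-zero}). The only cosmetic difference is that after the deterministic reduction
\[
\norm{\e^{it\Delta}f_0^\omega}_{L_t^\infty L_x^q}\lesssim\norm{\e^{it\Delta}\japbrak{\nabla}^sf_0^\omega}_{L_t^{p_0}L_x^q}
\]
you re-run the Minkowski--Khinchin--Bernstein--Strichartz chain by hand, whereas one could instead apply Lemma~\ref{lemma:rand-zero} directly to the right-hand side (with the admissible pair $(p_0,\widetilde r)$ and $\widetilde r\leq q$) and conclude immediately from the monotonicity of tail events; this is precisely what ``Sobolev embedding in time + the previous Lemma'' compresses into one line. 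Your parameter choices ($\alpha=s/2$, $p_0>2/s$ and $p_0$ large enough that $\widetilde r=6p_0/(3p_0-4)\leq q$) are consistent, and your closing remark that strict positivity of $s$ is needed to pay for the $L_t^\infty$ embedding correctly identifies why the hypothesis is what it is.
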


\begin{proof}
See Lemma 5.15 in~\cite{dodson-luhrmann-mendelson-19}. This follows from a Sobolev embedding in time, and from the improved Strichartz estimate stated in the previous Lemma.
\end{proof}
We deduce from these large deviation bounds, together with lemma~\ref{lemma:large:deviation}, the following spacetime estimates on $f^\omega$, for some $\omega$ in a large measure set. 
\begin{lemma}
\label{lemma:rand-infty}
For every $\alpha>0$ there exists a set $\Omega_\alpha$ and a constant $C_\alpha$ depending on $\norm{f_0}_{H^s}$ such that $\mathbb{P}(\Omega\setminus\Omega_\alpha)\leq \alpha$, and for every $\omega\in\Omega_\alpha$ we have
\begin{equation}
\label{eq:ass:data}
F^\omega(\R)+F_\infty^\omega(\R)\leq C_\alpha\,.
\end{equation}
\end{lemma}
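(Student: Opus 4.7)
The plan is a direct combination of the two large-deviation bounds established in Lemmas~\ref{lemma:rand-zero} and~\ref{lemma:large-deviation-infty}, one term at a time, followed by a union bound over the (finitely many) exceptional sets.

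First I would handle the four norms appearing in $F^\omega(\R)$ by finding, for each of them, a Strichartz-admissible pair $(q,r)$ with $r$ below the target Lebesgue exponent in $x$, so that Lemma~\ref{lemma:rand-zero} can be applied with $\widetilde r$ equal to the target exponent. Concretely, $L_{t,x}^{10}$ comes from the admissible pair $(10,30/13)$, $L_{t,x}^{4}$ and $L_t^4 L_x^{12}$ from $(4,3)$, and $L_{t,x}^{5}$ from $(5,30/11)$. In each case, Lemma~\ref{lemma:rand-zero} furnishes constants $C,c>0$ and
\[
\mathbb{P}\set{\omega\in\Omega \mid \norm{\japbrak{\nabla}^s\e^{it\Delta}f_0^\omega}_{L_t^q L_x^{\widetilde r}(\R)}>\lambda}\leq C\exp\parent{-c\lambda^2\norm{f_0}_{H^s}^{-2}}\,.
\]
For the two $L_t^\infty L_x^q$ terms in $F_\infty^\omega(\R)$, with $q=4$ and $q=6$, I would invoke Lemma~\ref{lemma:large-deviation-infty} directly, which yields the same kind of Gaussian tail.

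Next I would turn these tails into a uniform bound. For each of the six terms, indexed by $j=1,\dots,6$, pick
\[
\lambda_{j,\alpha}\coloneqq\norm{f_0}_{H^s}\sqrt{\frac{1}{c_j}\log\parent{\frac{6C_j}{\alpha}}}\,,
\]
so that the corresponding tail probability is at most $\alpha/6$. Call $\Omega_{j,\alpha}$ the complementary event, and set $\Omega_\alpha\coloneqq\bigcap_{j=1}^6\Omega_{j,\alpha}$. By the union bound $\mathbb{P}(\Omega\setminus\Omega_\alpha)\leq 6\cdot\alpha/6=\alpha$, and by construction every $\omega\in\Omega_\alpha$ satisfies
\[
F^\omega(\R)+F_\infty^\omega(\R)\leq \sum_{j=1}^6\lambda_{j,\alpha}\eqqcolon C_\alpha\,,
\]
with $C_\alpha$ depending only on $\alpha$ and $\norm{f_0}_{H^s}$, as required.

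There is no genuine obstacle here: all the work is packaged in Lemmas~\ref{lemma:rand-zero} and~\ref{lemma:large-deviation-infty}. The only tiny bookkeeping step is the verification that for every target pair $(q,\widetilde r)$ appearing in $F^\omega$ one can find a Strichartz-admissible pair $(q,r)$ with $r\leq\widetilde r$ so as to fit the hypotheses of Lemma~\ref{lemma:rand-zero}, which is the routine computation carried out above.
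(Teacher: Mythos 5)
Your proof is correct and follows exactly the route the paper takes — the paper states Lemma~\ref{lemma:rand-infty} without a written proof, simply remarking that it follows from the large deviation bounds of Lemmas~\ref{lemma:rand-zero} and~\ref{lemma:large-deviation-infty}. You fill in the routine details (verifying the admissible pairs $(10,30/13)$, $(4,3)$, $(5,30/11)$ and applying a union bound over the six terms), all of which check out.
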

In what follows, $\omega\in\Omega_\alpha$ is fixed such that~\eqref{eq:ass:data} holds. At this stage, we do not need the radial assumption on $f_0$. 
\subsection{Additional estimate for randomized radial initial data}
The following improved Sobolev embedding for radial functions was introduced in~\cite{dodson-luhrmann-mendelson-20}, and used in~\cite{dodson-luhrmann-mendelson-19} in order to prove almost-sure scattering for the energy critical Schrödinger and wave equation in $\R^4$. Here, we present an analog of this lemma in dimension 3, whose proof needs some slight modifications.
\begin{lemma}[Improved Sobolev embedding for radial functions in $\R^3$]
For all $\delta>0$, there exists $C_\delta>0$ such that for all radially-symmetric function $f_0$ in $H^\delta(\R^3)$ we have 
\label{lemma:radialish:embedding} 
\begin{equation}
\label{eq:radialish}
\normbig{\japbrak{x}\parentbig{\sum_{k\in\Z^3}\abs{Q_kf_0}^2}^\frac{1}{2}}_{L_x^\infty(\R^3)}\leq C_\delta \norm{f_0}_{H^\delta(\R^3)}\,.
\end{equation}

\end{lemma}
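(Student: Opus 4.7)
The strategy is to extract the weight $\langle x\rangle^{-1}$ from the radial decay of $f_0$ via the radial Sobolev inequality in $\mathbb{R}^3$, and to handle the $\ell^2$-sum over unit-scale Wiener cubes by means of a Poisson summation identity. Fixing $x\in\R^3$ and rewriting
\[
Q_k f_0(x) = \int_{\R^3} e^{ik\cdot z}\,\psi^\vee(z)\,f_0(x-z)\,dz,
\]
one recognizes that $Q_k f_0(x)$ is, up to a phase, the $k$-th Fourier coefficient of the Schwartz function $z\mapsto \psi^\vee(z)f_0(x-z)$. I would then apply the Poisson summation formula $\sum_{k\in\Z^3}e^{ik\cdot z}=(2\pi)^3\sum_{m\in\Z^3}\delta(z-2\pi m)$ to get the pointwise identity
\[
\sum_{k\in\Z^3}\abs{Q_k f_0(x)}^2 = (2\pi)^3\sum_{m\in\Z^3}\int_{\R^3}\psi^\vee(z)\,\overline{\psi^\vee(z-2\pi m)}\,f_0(x-z)\,\overline{f_0(x-z+2\pi m)}\,dz.
\]

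For the main $m=0$ contribution, which equals $\int|\psi^\vee(z)|^2|f_0(x-z)|^2\,dz$, I would invoke the pointwise radial Sobolev bound in dimension three, $|y||f_0(y)|\lesssim \|f_0\|_{H^\delta(\R^3)}$, valid for radial $f_0\in H^\delta$ with $\delta>1/2$ (with a dyadic decomposition $f_0=\sum_j P_jf_0$ used to reach smaller $\delta$, the frequency localized estimate $|y||P_jf_0(y)|\lesssim 2^{j/2}\|P_jf_0\|_{L^2}$ being combined with the observation that $Q_kP_jf_0\equiv 0$ unless $|k|\sim 2^j$). This reduces the main term to $\|f_0\|_{H^\delta}^2\int|\psi^\vee(z)|^2|x-z|^{-2}\,dz$, which is $\lesssim \langle x\rangle^{-2}$: the Schwartz decay of $\psi^\vee$ localizes the integral near $z=0$ where $|x-z|\sim\langle x\rangle$, while the local singularity of $|x-z|^{-2}$ at $z=x$ is integrable in $\R^3$. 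For the alias terms $m\neq 0$, I would use the rapid decay $|\psi^\vee(z)\psi^\vee(z-2\pi m)|\leq C_N\langle m\rangle^{-N}\min(\langle z\rangle^{-N},\langle z-2\pi m\rangle^{-N})$ together with radial Sobolev applied to both $f_0(x-z)$ and $f_0(x-z+2\pi m)$; this produces a contribution $\leq C_N\langle m\rangle^{-N}\langle x\rangle^{-1}\langle x+2\pi m\rangle^{-1}\|f_0\|_{H^\delta}^2$, and summing over $m\in\Z^3$ (splitting into $|m|\leq|x|/10$ and its complement) recovers again a $\langle x\rangle^{-2}$ bound.

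Combining the two contributions yields $\sum_{k\in\Z^3}|Q_k f_0(x)|^2\lesssim \langle x\rangle^{-2}\|f_0\|_{H^\delta}^2$, from which the claim follows upon taking the square root. The main obstacle I anticipate is the extension to regularities $\delta$ close to, or below, $1/2$, since the pointwise radial Sobolev bound degenerates there. This is precisely where the unit-scale structure of the Wiener decomposition plays a crucial role, since for each dyadic frequency $2^j$ only the shell of blocks $\{Q_k\}_{|k|\sim 2^j}$ contributes and the $2^{j/2}$-loss of the frequency-localized radial bound is balanced by the Bessel-type orthogonality of those blocks.
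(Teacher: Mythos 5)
Your Poisson-summation reduction of the square function is correct in substance: it is Parseval's identity for the $2\pi\Z^3$-periodization of $z\mapsto\psi^\vee(z)f_0(x-z)$ (that function is not Schwartz when $f_0$ is merely in $H^\delta$, so the distributional identity $\sum_k\e^{ik\cdot z}=(2\pi)^3\sum_m\delta(z-2\pi m)$ should be replaced by periodization and density, but this is a cosmetic point). The genuine gap is the regime $\delta\leq1/2$, which is exactly what the lemma asserts and what the paper needs: the proof of Proposition~\ref{prop:ass:data:radial} requires $\delta$ small, cf.\ the condition $\sigma-\frac{1}{2}-s+\delta<0$ in~\eqref{eq:ass-r-delta}, so a version of~\eqref{eq:radialish} valid only for $\delta>1/2$ is useless downstream. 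Your main term is estimated through the pointwise radial bound $\abs{y}\abs{f_0(y)}\lesssim\norm{f_0}_{H^\delta}$, valid only for $\delta>1/2$, and the proposed remedy — dyadic decomposition together with $\abs{y}\abs{P_jf_0(y)}\lesssim 2^{j/2}\norm{P_jf_0}_{L^2}$, the loss being "balanced by Bessel-type orthogonality of the blocks $Q_k$, $\abs{k}\sim2^j$" — cannot work as stated: the orthogonality of the unit cubes within a fixed dyadic shell is precisely what your Poisson/Parseval identity has already spent, so there is nothing further to harvest from it, and summing $2^{j}\norm{P_jf_0}_{L^2}^2$ over $j$ produces exactly $\norm{f_0}_{\dot H^{1/2}}^2$. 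As written, your argument proves~\eqref{eq:radialish} with $H^{1/2}$ on the right-hand side, not $H^\delta$ for all $\delta>0$.

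The loss is an artifact of the pointwise bound, which is the wrong tool for the quantity you actually face. After Poisson summation the $m=0$ term is a weighted local $L^2$ average, $\int\abs{\psi^\vee(z)}^2\abs{f_0(x-z)}^2dz$, and for radial $f_0$ such an average carries the weight with no derivatives at all: a unit ball centred at distance $R=\abs{x}$ meets each sphere $\{\abs{y}=r\}$, $r\sim R$, in a cap of area $O(1)$ out of total area $\sim R^2$, whence $\int_{B_1(x)}\abs{f_0}^2\lesssim R^{-2}\norm{f_0}_{L^2}^2$; the aliases $m\neq0$ are handled the same way after Cauchy--Schwarz, using $\japbrak{m}^{-N}\japbrak{x-2\pi m}^{-2}\lesssim\japbrak{m}^{-N+2}\japbrak{x}^{-2}$. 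With that substitution your physical-space route does close, for every $\delta>0$ (indeed with $L^2$ on the right), and it would then be genuinely different from the paper's proof, which works in Fourier-space spherical coordinates, integrates by parts once in the polar angle to gain $(\abs{x}\rho)^{-1}$, and counts the lattice points $\abs{k}\sim j$ with prescribed $\sin\theta_k$, paying a $\log(1+j)$ that is absorbed by $\delta>0$. But the proposal as submitted contains no working mechanism bridging $\delta>1/2$ down to arbitrary $\delta>0$, and that is the missing step.
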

\begin{proof}
Assume that $x=\abs{x}e_3$, where $e_3=(0,0,1)$ and $\abs{x}\gg1$. We denote by $(\rho,\theta,\varphi)$ the spherical coordinates in the Fourier space $\R_\xi^3$:
\[\xi = \xi(\rho,\theta,\varphi) =  \parent{\rho\cos(\varphi)\sin(\theta),\rho\sin(\varphi)\sin(\theta),\rho\sin(\varphi)\cos(\theta)}\,. \]
with $\rho>0$, $\theta\in\intervaloo{0}{\pi}$ and $\varphi\in\intervalcc{0}{2\pi}$. Given some $k\in\Z^3$ such that $1\ll\abs{k}$, we write
\begin{equation}
\label{eq:Q_k:spherical}
    \begin{split}
    Q_kf(x)&=\int_{\R^3}\e^{i\xi\cdot x}\psi_k(\xi)\widehat{f}(\xi)d\xi\\
    &=\int_0^{+\infty}\int_0^{2\pi}\parentbig{\int_0^\pi \e^{i\abs{x}\rho\cos{\theta}}\psi_k(\xi(\rho,\theta,\varphi))\sin{\theta}d\theta} d\varphi\rho^2\widehat{f_0}(\rho)d\rho\,.
    \end{split}
\end{equation}
Hence, we need to study the following oscillatory integral
\begin{equation}
\label{eq:oscillatory}
    I(x,\rho,\varphi) = \int_0^\pi\e^{i\abs{x}\rho\cos{\theta}}\psi_k(\xi(\rho,\theta,\varphi))\sin{\theta}d\theta\,.
\end{equation}
The function $\psi_k$ and its derivatives provide a localization on the unit cube centered around $k$. More precisely, this localization reads in spherical coordinates as follows:   if $\xi(\rho,\theta,\varphi)$ is in $\supp(\psi_k)$, then 
\begin{itemize}
    \item $\rho$ is in an interval of size $\sim1$ around $\abs{k}$.
    \item $\theta$ in an interval of size $\sim\frac{1}{\abs{k}}$ around the angle $\theta_k$, where $\sin(\theta_k)\sim\sqrt{1-\parent{\frac{k}{\abs{k}}e_3}^2}$.
    \item $\varphi$ is in an interval $J_k$ of size $\sim\min(1,\frac{1}{\abs{k}\sin{(\theta_k)}})$.
\end{itemize}
To gain some decay with respect to $x$, we integrate by parts in the variable $\theta$. It holds
\begin{multline}
\label{eq:I:xrhovarphi}
        I(x,\rho,\varphi) = -\frac{1}{i\abs{x}\rho}\int_0^\pi\partial_\theta\parent{\e^{i\rho\cos\theta\abs{x}}}\psi_k(\xi(\rho,\theta,\varphi))d\theta\\
        =-\frac{1}{i\abs{x}\rho}\left[\e^{i\rho\cos\theta\abs{x}}\psi_k(\xi(\rho,\theta,\varphi))\right]_0^\pi+\frac{1}{i\abs{x}\rho}\int_0^\pi \e^{i\rho\cos\theta\abs{x}}\partial_\theta\varphi_k(\xi(\rho,\theta,\varphi))d\theta\,.
\end{multline}
Moreover, we have for all $\xi(\rho,\theta,\varphi)\in\R_\xi^3$ and all $x\in\R^3$ and $k\in\Z^3$ that
\begin{equation}
\label{eq:dtheta}
    \abs{\frac{\partial}{\partial_\theta} \psi_k(\xi)}\leq \rho\abs{\nabla_\xi\psi_k(\xi)}\,.
\end{equation}
Keeping in mind the localization in the variables $\rho,\theta$ yielded by the amplitude $\psi_k$ and its derivatives, we deduce from~~\eqref{eq:I:xrhovarphi} and~\eqref{eq:dtheta} that
\begin{equation}
\label{eq:I:xrhovarphi2}  
\abs{I(x,\rho,\varphi)}\lesssim\frac{1}{\abs{x}\rho}\mathbf{1}_{\intervaloo{\abs{k}-1}{\abs{k}+1}}(\rho)\mathbf{1}_{J_k}(\varphi)\,.
\end{equation}
Using that $\varphi$ is localized on the interval $J_k$ of size $\min(1,\frac{1}{\abs{k}\sin{\theta_k}})$, integrating over the domain$ \intervaloo{\abs{k}-1}{\abs{k}+1}\times J_k$ and using Cauchy-Schwarz in $\rho$, we get
\begin{multline*}
    \abs{Q_k(f_0)(x)}\lesssim\frac{1}{\abs{k}\abs{x}}\min(1,\frac{1}{\abs{k}\sin\theta_k})\norm{\mathbf{1}_{\intervaloo{\abs{k}-1}{\abs{k}+1}}(\rho)\widehat{f_0}(\rho)\rho^2}_{L_\rho^1(\R)}
    \\
    \lesssim\min(1,\frac{1}{\abs{k}\sin\theta_k})\norm{\mathbf{1}_{\intervaloo{\abs{k}-1}{\abs{k}+1}}(\rho)\widehat{f_0}(\rho)\rho}_{L_\rho^2(\R)}
\end{multline*}
Summing the above estimate over $\Z^3$ gives
\begin{equation*}
    \begin{split}
    \abs{x}^2\sum_{k\in\Z^3}\abs{Q_k(f_0)(x)}^2&\lesssim\sum_{k\in\Z^3}\min(1,\frac{1}{\abs{k}\sin\theta_k})^2\norm{\mathbf{1}_{\intervaloo{\abs{k}-1}{\abs{k}+1}}(\rho)\widehat{f_0}(\rho)\rho}_{L_\rho^2(\R)}^2\\
    &\lesssim\sum_{j\in\N}\sum_{l=0}^j\frac{\#\set{k\in\Z^3\mid\abs{k}=j,\ \abs{j\sin(\theta_k)-l}\leq1}}{(1+l)^2}\norm{\mathbf{1}_{\intervaloo{j-1}{j+1}}(\rho)\widehat{f_0}(\rho)\rho}_{L_\rho^2(\R)}^2\\
    &\lesssim \sum_{j\in\N}\sum_{l=0 }^j\frac{1}{1+l}\norm{\mathbf{1}_{\intervaloo{j-1}{j+1}}(\rho)\widehat{f_0}(\rho)\rho}_{L_\rho^2(\R)}^2\\
    &\lesssim \sum_{j\geq1}\log(1+j)\norm{\mathbf{1}_{\intervaloo{j-1}{j+1}}(\rho)\widehat{f_0}(\rho)\rho}_{L_\rho^2(\R)}^2\lesssim_\delta \norm{f_0}_{H^\delta(\R^3)}^2\,.
\end{split}
\end{equation*}
This ends the proof of Lemma~\ref{lemma:radialish:embedding}.
\end{proof}
\begin{corollary}
Let $2\leq r\leq\infty$, $K\in2^\N$ and $\delta>0$. There exists a constant $C_\delta>0$ such that for any radial function $f_0\in L^2(\R^3)$ we have 
\begin{equation}
    \label{eq:radialish:r}
    \normbig{\japbrak{x}^{1-\frac{2}{r}}\parentbig{\sum_{k\in\Z^3}\abs{Q_kP_Kf_0}^2}^\frac{1}{2}}_{L_x^r(\R^3)}\leq C_\delta K^\delta\norm{P_Kf_0}_{L_x^2(\R^3)}\,.
\end{equation}
\end{corollary}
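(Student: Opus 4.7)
The plan is to interpolate between the endpoints $r=2$ and $r=\infty$. I introduce the notation
\[
F(x) \coloneqq \parentbig{\sum_{k\in\Z^3}\abs{Q_kP_Kf_0(x)}^2}^{1/2}\,,
\]
so that the goal becomes the bound $\norm{\japbrak{x}^{1-2/r}F}_{L_x^r}\lesssim K^\delta\norm{P_Kf_0}_{L_x^2}$.

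At the endpoint $r=2$, I would invoke almost-orthogonality: the family $\{Q_k\}_{k\in\Z^3}$ consists of Fourier projectors with uniformly finitely overlapping supports, so Plancherel yields
\[
\norm{F}_{L_x^2}^2 = \sum_{k\in\Z^3}\norm{Q_kP_Kf_0}_{L_x^2}^2 \lesssim \norm{P_Kf_0}_{L_x^2}^2\,.
\]
At the endpoint $r=\infty$, I note that $P_Kf_0$ remains radially symmetric (since $P_K$ has a radial symbol), and then apply Lemma~\ref{lemma:radialish:embedding} to $P_Kf_0$, combined with Bernstein to trade $H^\delta$ for $K^\delta$ times $L^2$:
\[
\norm{\japbrak{x}F}_{L_x^\infty}\leq C_\delta\norm{P_Kf_0}_{H^\delta}\lesssim C_\delta K^\delta \norm{P_Kf_0}_{L_x^2}\,.
\]

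For the intermediate values $2<r<\infty$, a direct Hölder argument suffices: writing $\japbrak{x}^{r-2}F^r = (\japbrak{x}F)^{r-2}\cdot F^2$ and integrating,
\[
\norm{\japbrak{x}^{1-\frac{2}{r}}F}_{L_x^r}^{r}\leq \norm{\japbrak{x}F}_{L_x^\infty}^{r-2}\norm{F}_{L_x^2}^2\lesssim \parent{C_\delta K^\delta\norm{P_Kf_0}_{L_x^2}}^{r-2}\norm{P_Kf_0}_{L_x^2}^2\,.
\]
Taking $r$-th roots gives $\norm{\japbrak{x}^{1-2/r}F}_{L_x^r}\lesssim_\delta K^{\delta(1-2/r)}\norm{P_Kf_0}_{L_x^2}\leq K^\delta\norm{P_Kf_0}_{L_x^2}$, as claimed.

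No real obstacle is anticipated: the heavy lifting has already been carried out in Lemma~\ref{lemma:radialish:embedding}, and the remainder is a standard interpolation between a trivial $L^2$ square-function bound and that weighted $L^\infty$ endpoint. The only point to verify is that the Littlewood--Paley truncation preserves radial symmetry so that Lemma~\ref{lemma:radialish:embedding} is applicable to $P_Kf_0$, which is immediate from the fact that $P_K$ is defined by a radial Fourier multiplier.
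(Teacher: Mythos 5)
Your proposal is correct and follows essentially the same route as the paper: Hölder interpolation between the trivial $L^2$ square-function bound and the weighted $L^\infty$ endpoint from Lemma~\ref{lemma:radialish:embedding} applied to the (still radial) function $P_Kf_0$, with $\norm{P_Kf_0}_{H^\delta}\lesssim K^\delta\norm{P_Kf_0}_{L^2}$ by frequency localization. The only cosmetic difference is that you treat the endpoints $r=2,\infty$ separately and spell out the almost-orthogonality, which the paper records as a trivial estimate.
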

\begin{proof}
The estimate~\eqref{eq:radialish:r} is obtained by interpolation between~\eqref{eq:radialish} and the trivial estimate
\[\norm{\parentbig{\sum_{k\in\Z^3}\abs{Q_kP_Kf_0}^2}^\frac{1}{2}}_{L_x^2(\R^3)}\leq \norm{P_Kf_0}_{L_x^2(\R^3)}\,. \]
It holds
\begin{equation*}
    \begin{split}
        \normbig{\japbrak{x}^{1-\frac{2}{r}}\parentbig{\sum_{k\in\Z^3}\abs{Q_kP_Kf_0}^2}^\frac{1}{2}}_{L_x^r}
        &\leq \normbig{\japbrak{x}\parentbig{\sum_{k\in\Z^3}\abs{Q_kP_Kf_0}^2}^\frac{1}{2}}_{L_x^\infty}^{1-\frac{2}{r}}\normbig{\parentbig{\sum_{k\in\Z^3}\abs{Q_kP_Kf_0}^2}^\frac{1}{2}}_{L_x^2}^\frac{2}{r}\\
        &\leq C_\delta\norm{P_Kf_0}_{H^\delta}^{1-\frac{2}{r}}\norm{P_Kf_0}_{L_x^2}^\frac{2}{r}\\
        &\lesssim C_\delta K^\delta\norm{P_Kf_0}_{L_x^2}\,.
    \end{split}
\end{equation*}
\end{proof}
We will use the local smoothing estimate under the following form.
\begin{lemma}[Local smoothing,~\cite{local-smoothing}]
\label{lemma:local:smooting}
For any $d\geq3$ and $\alpha>0$, there exists a constant $C$ such that for all $f_0\in L^2(\R^d)$,
\begin{equation}
\label{eq:local:smoothing1}
\underset{R>0}{\sup}\norm{\e^{it\Delta}f_0}_{L_{t,x}^2(\R\times\{\vert x\vert<R\})}\leq CR^{1/2} \norm{\abs{\nabla}^{-1/2}f_0}_{L_x^2(\R^d)}\,.
\end{equation}
\end{lemma}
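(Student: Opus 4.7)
The plan is to combine Plancherel in time with a scaling argument and the classical Agmon--H\"ormander extension estimate for the sphere. First, writing
\[
\e^{it\Delta}f_0(x)=\int_{\R^d}\e^{i(x\cdot\xi-t\abs{\xi}^2)}\widehat f_0(\xi)\,d\xi
\]
and passing to polar coordinates $\xi=\rho\omega$ with $\rho\geq0$, $\omega\in S^{d-1}$, followed by the change of variable $\tau=\rho^2$, one exhibits the time Fourier transform of $t\mapsto\e^{it\Delta}f_0(x)$ as a measure supported on $\{\tau+\abs{\xi}^2=0\}$. Plancherel in time, followed by undoing $\tau=\rho^2$, yields up to a universal constant
\[
\int_\R\abs{\e^{it\Delta}f_0(x)}^2\,dt\;\simeq\;\int_0^{+\infty}\rho^{2d-3}\abs{T_\rho G_\rho(x)}^2\,d\rho,\qquad T_\rho G_\rho(x):=\int_{S^{d-1}}\e^{i\rho x\cdot\omega}\widehat f_0(\rho\omega)\,d\omega,
\]
where $G_\rho(\omega):=\widehat f_0(\rho\omega)$ and the factor $\rho^{2d-3}$ records the polar Jacobian $\rho^{d-1}$ together with the $(2\sqrt\tau)^{-1}$ coming from $\tau=\rho^2$.

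Next I would integrate this identity in $x$ over the ball $\{\abs{x}<R\}$. The scaling $y=\rho x$ reduces the inner integral to one on the unit sphere,
\[
\int_{\abs{x}<R}\abs{T_\rho G_\rho(x)}^2\,dx=\rho^{-d}\int_{\abs{y}<\rho R}\abs{T_1 G_\rho(y)}^2\,dy,
\]
after which I would invoke the Agmon--H\"ormander extension bound
\[
\int_{\abs{y}<\widetilde R}\absbig{\int_{S^{d-1}}\e^{iy\cdot\omega}G(\omega)\,d\omega}^2dy\;\lesssim\;\widetilde R\,\norm{G}_{L^2(S^{d-1})}^2
\]
valid for $\widetilde R\geq1$. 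The regime $\widetilde R<1$ is absorbed by the trivial $L^\infty$ estimate $\abs{T_1G}^2\lesssim\norm{G}_{L^2(S^{d-1})}^2$ multiplied by the volume $\widetilde R^d\leq\widetilde R$ (using $d\geq 3$). This Agmon--H\"ormander step is the one non-trivial external input; it is precisely where the local-smoothing gain is captured, and it can be cited from the references in~\cite{local-smoothing} (or derived via a $TT^*$/resolvent argument for $-\Delta$).

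Combining the two displays and summing the estimates gives
\[
\iint_{\R\times\{\abs{x}<R\}}\abs{\e^{it\Delta}f_0}^2\,dt\,dx\;\lesssim\;R\int_0^{+\infty}\rho^{d-2}\norm{G_\rho}_{L^2(S^{d-1})}^2\,d\rho\;=\;R\,\norm{\abs{\nabla}^{-1/2}f_0}_{L^2(\R^d)}^2,
\]
where the last equality is the polar-coordinate rewriting of $\norm{\abs{\nabla}^{-1/2}f_0}_{L^2}^2=\int_{\R^d}\abs{\xi}^{-1}\abs{\widehat f_0(\xi)}^2\,d\xi$. Taking square roots and the supremum over $R>0$ yields the claim. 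The main obstacle in this plan is simply to track the polar-coordinate Jacobians carefully and to invoke the Agmon--H\"ormander bound; once these are in place, the Plancherel/scaling architecture is routine.
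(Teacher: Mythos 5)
Your argument is correct, and in fact the paper does not prove this lemma at all: it is imported as a known result from the cited reference, so there is no internal proof to compare against. What you have written is the classical proof of the local smoothing estimate (in the spirit of Constantin--Saut, Sj\"olin and Vega): Plancherel in time on the paraboloid, polar coordinates with the change of variable $\tau=\rho^2$, rescaling $y=\rho x$, and the Agmon--H\"ormander bound $\int_{|y|<\widetilde R}|\widehat{G\,d\sigma}(y)|^2dy\lesssim \widetilde R\,\|G\|_{L^2(S^{d-1})}^2$ for $\widetilde R\geq1$, with the trivial volume bound for $\widetilde R<1$. Your Jacobian bookkeeping is right (the weight $\rho^{2d-3}$, the factor $\rho^{-d}$ from scaling, and the final identification $\int_0^\infty\rho^{d-2}\|G_\rho\|_{L^2(S^{d-1})}^2d\rho=\|\,|\nabla|^{-1/2}f_0\|_{L^2}^2$ up to constants), and the constant you obtain is uniform in $R$, as required. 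Two cosmetic remarks: the small-radius regime only needs $d\geq1$, not $d\geq3$; and the Plancherel step should be phrased for a.e. fixed $x$ for the function of $\tau$ supported on $[0,\infty)$ (justified by density), but this is routine.
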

Recall that the Fourier multiplier $I:H_x^\sigma\to H_x^1$ was defined in~\eqref{eq:def:I}
\begin{proposition}
\label{prop:ass:data:radial}
Let $s>1/4$ and $\sigma>1/2$ such that $\sigma<2s$. Then, for all $r\in\intervaloc{4}{+\infty}$ and $\delta>0$, there exist $N_0$, $C>0$ and $c>0$ depending on $r,\delta,s,\sigma$ such that for all $\lambda>0$ and $N\geq N_0$, we have the large deviation estimates

\begin{equation}
    \label{eq:large-deviation-2}
\mathbb{P}\parent{\set{\omega\in\Omega\mid \norm{\nabla I\e^{it\Delta}f_0^\omega}_{L_t^2(\R;L_x^r(\R^3))}>\lambda}}\leq C\exp\parent{-c\lambda^2( N^{-\delta}N^\frac{1-\sigma}{2}\norm{f_0}_{H_x^s})^{-2}}\,.
\end{equation}
\end{proposition}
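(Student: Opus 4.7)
The statement is a large deviation bound, and by Lemma~\ref{lemma:large:deviation} it suffices to establish the moment estimate
\[
\normbig{\norm{\nabla I\e^{it\Delta}f_0^\omega}_{L_t^2(\R;L_x^r(\R^3))}}_{L_\omega^p(\Omega)}\lesssim\sqrt{p}\,N^{-\delta}N^{\frac{1-\sigma}{2}}\norm{f_0}_{H^s}
\]
for all $p\geq\max(2,r)$, with implicit constants depending on $r,\delta,s,\sigma$. My plan is to combine Minkowski's inequality and the probabilistic decoupling of Lemma~\ref{lemma:Khinchin} with a Littlewood-Paley decomposition, reducing the bound to a spatial square-function estimate, and then to establish a pointwise spatial-decay bound for this square function using the radial structure of $f_0$, which plays the role of a combined local-smoothing and radial-Sobolev estimate.

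First, Minkowski (valid for $p\geq r\geq 2$) together with Lemma~\ref{lemma:Khinchin} applied pointwise in $(t,x)$ gives
\[
\text{LHS}\lesssim \sqrt{p}\,\normbig{\parentbig{\sum_{k\in\Z^3}\abs{\nabla I\e^{it\Delta}Q_kf_0}^2}^{1/2}}_{L_t^2L_x^r}.
\]
A Littlewood-Paley decomposition of the square sum, the triangle inequality in the dyadic frequency $K$, and another application of Minkowski to swap $L_t^2$ and $L_x^r$ (valid for $r\geq 2$) reduce the task to bounding $\sum_K\norm{G_K}_{L_x^r}$, where
\[
G_K(x)\coloneqq\norm{A_K(\cdot,x)}_{L_t^2},\qquad A_K(t,x)\coloneqq\parentbig{\sum_k\abs{Q_kP_K\nabla I\e^{it\Delta}f_0}^2}^{1/2}.
\]

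The crucial step is the pointwise estimate $G_K(x)\lesssim K^{1/2+\delta_0}m(K)\norm{P_Kf_0}_{L^2}\japbrak{x}^{-1}$ for arbitrarily small $\delta_0>0$. Using Plancherel in time for the Fourier multiplier $e^{-it\abs{\xi}^2}$ yields the identity
\[
\norm{\e^{it\Delta}g(x)}_{L_t^2}^2=\pi\int_0^\infty\rho^3\absbig{\int_{S^2}\widehat{g}(\rho\omega)\,e^{ix\cdot\rho\omega}d\omega}^2d\rho.
\]
Applied to $g=Q_kP_K\nabla If_0$ and summed over $k$, the radial symmetry of $f_0$ factors the radial function $\widehat{f_0}(\rho)$ out of the angular integral, reducing the task to bounding $\sum_k\abs{\int_{S^2}\psi_k(\rho\omega)e^{ix\cdot\rho\omega}d\omega}^2$. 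The latter is controlled exactly as in the proof of Lemma~\ref{lemma:radialish:embedding}: an integration by parts in the polar angle produces a factor $(\abs{x}\rho)^{-1}$, and the lattice-point count on the sphere of radius $\rho\sim K$ absorbs a power of $\rho$ up to a $K^{\delta_0}$ factor coming from a $\log$ term. Combined with the direct bound valid near the origin (where a matching bound $G_K(0)\lesssim K^{1/2}m(K)\norm{P_Kf_0}_{L^2}$ comes from the same Plancherel identity with $e^{ix\cdot\rho\omega}\equiv 1$), this yields the claimed estimate. Since $r>3$, the weight $\japbrak{x}^{-r}$ is integrable on $\R^3$, and therefore $\norm{G_K}_{L_x^r}\lesssim_r K^{1/2+\delta_0}m(K)\norm{P_Kf_0}_{L^2}$.

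Finally, Cauchy-Schwarz in $K$ together with $\norm{f_0}_{H^s}^2\sim\sum_KK^{2s}\norm{P_Kf_0}_{L^2}^2$ bounds the dyadic sum by $\norm{f_0}_{H^s}\parentbig{\sum_KK^{1+2\delta_0-2s}m(K)^2}^{1/2}$. Splitting at the threshold $K=N$, where $m(K)=1$ for $K\leq N$ and $m(K)\sim(N/K)^{1-\sigma}$ for $K>N$, the assumption $\sigma<2s$ (with $\delta_0>0$ small enough for convergence of the high-frequency tail) ensures both contributions are dominated by $K\sim N$, yielding $N^{1/2-s+\delta_0}\norm{f_0}_{H^s}$. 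The equivalence $\sigma<2s\Leftrightarrow 1/2-s<(1-\sigma)/2$ allows, for $\delta>0$ given in the proposition statement, a choice of internal $\delta_0>0$ small enough to produce the desired factor $N^{-\delta}N^{(1-\sigma)/2}$, and Lemma~\ref{lemma:large:deviation} concludes. The main obstacle is the pointwise bound for $G_K$: extracting both the $\japbrak{x}^{-1}$ spatial decay and the $K^{1/2}$ factor (which is what encodes the local-smoothing gain $\abs{\nabla}^{-1/2}$ at the level of the square sum) requires carefully combining Plancherel in time with the oscillatory-integral analysis of Lemma~\ref{lemma:radialish:embedding}.
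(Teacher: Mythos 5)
Your proof is mathematically sound but takes a genuinely different route from the paper's. The paper's proof first reduces to finite $r$ by Bernstein, then performs a physical-space dyadic decomposition with cutoffs $\chi_j$, splits into a near-diagonal piece and a remainder (the remainder being handled by the rather technical kernel bounds of Lemma~\ref{lemma:commute}), and for the main piece invokes \emph{separately} the interpolated radial Sobolev estimate~\eqref{eq:radialish:r} (to extract $\japbrak{x}^{1-2/r}$ decay) and the local smoothing estimate~\eqref{eq:local:smoothing1} (to gain $|\nabla|^{-1/2}$ and absorb the $2^{j/2}$ factor). The summability over $j$ in the resulting geometric series is what forces $r>4$. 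Your argument, by contrast, converts $L_t^2$ into an $L_\rho^2$-in-radial-frequency integral via Plancherel in time for the Schrödinger phase $e^{-it\rho^2}$, factors the radial $\widehat{f_0}(\rho)$ out of the remaining angular integral, and then runs the oscillatory-integral and lattice-counting argument of Lemma~\ref{lemma:radialish:embedding} directly inside the $\rho$-integral. This single step produces both the $K^{1/2}$ gain (which in the paper comes from local smoothing) and the $\japbrak{x}^{-1}$ spatial decay (which in the paper comes from radial Sobolev) simultaneously, yielding a clean pointwise bound $G_K(x)\lesssim K^{1/2+\delta_0}m(K)\|P_Kf_0\|_{L^2}\japbrak{x}^{-1}$, after which $\|G_K\|_{L_x^r}$ is finite as soon as $r>3$. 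So your approach avoids the physical-space dyadic decomposition and the auxiliary operator bounds of Lemma~\ref{lemma:commute} entirely, and marginally enlarges the admissible range of $r$ (from $r>4$ to $r>3$), though this extra range is not needed for the statement. Both approaches eventually produce the same moment bound and feed it into Lemma~\ref{lemma:large:deviation}, and both impose the same implicit restriction that the parameter $\delta$ be smaller than $s-\sigma/2$. The one place that would deserve a careful write-up is the interchange of the probabilistic decoupling, the Littlewood–Paley decomposition, and the triangle inequality at the start (you decompose the Wiener square-sum into dyadic shells $|k|\sim K$ by subadditivity of $\sqrt{\cdot}$, then Minkowski to pull out the $L_t^2$), and the boundary terms at the poles in the $\theta$-integration-by-parts — but these are identical technical points in the paper's Lemma~\ref{lemma:radialish:embedding}, not gaps introduced by your approach.
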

\begin{proof}
It suffices to prove that for any $\delta>0$ small enough (depending on $s$ and $\sigma$), there exists a constant $C_{\delta}$ such that for any $p>r$ one has
\begin{equation}
    \label{eq:large-deviation-F2}
    \norm{\nabla I\e^{it\Delta}f_0^\omega}_{L_\omega^pL_t^2L_x^r}\leq C_{\delta}\sqrt{p}N^{-\delta}N^\frac{1-\sigma}{2}\norm{f_0}_{H_x^s(\R^3)}\,.
\end{equation}
Subsequently,~\eqref{eq:large-deviation-2} follows from Lemma~\ref{lemma:large:deviation}, with $F \coloneqq \norm{\nabla I\e^{it\Delta}f_0^\omega}_{L_t^2(\R;L_x^r(\R^3))}$ and $M=N^{0-}N^\frac{1-\sigma}{2}\norm{f_0}_{H_x^s}$. To prove~\eqref{eq:large-deviation-F2}, we dyadically decompose $f_0$ in the frequency space. We reduce the case when $r=\infty$ to the case where $r$ is finite by applying the Bernstein estimate. For any $3<r<\infty$, we get
\begin{equation}
\label{eq:large-deviation-bernstein}
    \norm{\nabla\e^{it\Delta}If_0^\omega}_{L_\omega^pL_t^2L_x^r}\lesssim\norm{\nabla P_{\leq1}\e^{it\Delta}f_0^\omega}_{L_\omega^pL_t^2L_x^r}+\sum_{K\in2^{\N^*}}K^\frac{3}{r}\norm{\nabla P_KI\e^{it\Delta}f_0^\omega}_{L_\omega^pL_t^2L_x^r}\,.
\end{equation}
Estimating the low-frequency term is standard, and we refer to~\cite{benyi2015-local}. However, to handle the high-frequency terms, we used the radial Sobolev embedding~\eqref{eq:radialish:r} and the local smoothing estimate. Let us fix $K\geq 2$ a dyadic integer. Then, we dyadically decompose the term on right-hand side of~\eqref{eq:large-deviation-bernstein} in the physical space, with some cutoff $\chi_j$ introduced in the notations paragraph. By the triangle inequality, we have
\[
\norm{\nabla P_KI\e^{it\Delta}f_0^\omega}_{L_\omega^pL_t^2L_x^r}\leq \sum_{j\geq0}\norm{\chi_j\nabla P_KI\e^{it\Delta}f_0^\omega}_{L_\omega^pL_t^2L_x^r}.
\]
Next, for each $j\geq0$, and $p\geq r$, we use Minkowski's inequality and the decoupling estimate~\eqref{eq:Khinchin} to get 
\[
    \norm{\chi_j\nabla P_KI\e^{it\Delta}f_0^\omega}_{L_\omega^pL_t^2L_x^r}\lesssim\sqrt{p}\normbig{\parentbig{\sum_{\substack{k\in\Z^3\\\vert k\vert\sim K}}\abs{\chi_jQ_k\nabla P_KI\e^{it\Delta}f_0}^2}^\frac{1}{2}}_{L_t^2L_x^r}\,.
\]
We split the sum into a localized term, and a remainder.
\begin{equation*}
\begin{split}
\norm{\chi_j\nabla P_KI\e^{it\Delta}f_0^\omega}_{L_\omega^pL_t^2L_x^r}\lesssim&\sqrt{p} \normbig{\parentbig{\sum_{\substack{k\in\Z^3\\\vert k\vert\sim K}}\abs{\chi_jQ_k\chi_{\leq j+5}\nabla P_KI\e^{it\Delta}f_0}^2}^\frac{1}{2}}_{L_t^2L_x^r}\\
+&\sqrt{p}\normbig{\parentbig{\sum_{\substack{k\in\Z^3\\\vert k\vert\sim K}}\abs{\chi_jQ_k\chi_{>j+5}\nabla P_KI\e^{it\Delta}f_0}^2}^\frac{1}{2}}_{L_t^2L_x^r}\,.
\end{split}
\end{equation*}
To estimate the remainder, we refer to~\cite{dodson-luhrmann-mendelson-19}. The proof is a bit technical but only uses Young inequality the local smoothing estimate~\eqref{eq:local:smoothing1} and some operator bounds collected in the following lemma that holds for dimension 3 without changing the proof.  
\begin{lemma}[Lemma 5.10 and Lemma 5.11 from~\cite{dodson-luhrmann-mendelson-19}]
\label{lemma:commute}
Let $2\leq r\leq \infty$. For any $k\in\Z^3$, any dyadic $J,L\in2 ^\N$ with $2^5J<L$, and any $\nu>0$, there exists $C_\nu>0$ such that
\begin{equation}
    \label{eq:commute:PQP}
\norm{\chi_jQ_k\chi_l}_{L_x^2(\R^3)\to L_x^r(\R^3)}\leq C_\nu L^{-\nu}\,.
\end{equation}
Similarly, given $L\in2^\N$ and $k,m\in\Z^3$ such that $\abs{k-m}\geq100$, it holds that
\begin{equation}
    \label{eq:commute:QPQ}
    \norm{Q_k\chi_lQ_m}_{L_x^2(\R^3)\to L_x^r(\R^3)}\leq C_\nu L^{-\alpha}\abs{k-m}^{-\nu}\,.
\end{equation}
\end{lemma}
However, we detail how to handle the main term, since this requires to combine the local smoothing estimate~\eqref{eq:local:smoothing1} with the improved radial Sobolev embedding~\eqref{eq:radialish}.~\footnote{\ The case when $j=0$ is actually easier and does not require to use the improved radial Sobolev embedding. We refer to the original proof in~\cite{dodson-luhrmann-mendelson-20} where this term is handled separately.} Given $j\geq0$, we have
\begin{multline*}
\normbig{\parentbig{\sum_{\substack{k\in\Z^3\\\vert k\vert\sim K}}\abs{\chi_jQ_k\chi_{\leq j+5}\nabla P_KI\e^{it\Delta}f_0}^2}^\frac{1}{2}}_{L_t^2L_x^r}\\
\lesssim2^{-j(1-\frac{2}{r})}\normbig{\japbrak{x}^{1-\frac{2}{r}}\parentbig{\sum_{\substack{k\in\Z^3\\\vert k\vert\sim K}}\abs{Q_k\chi_{\leq j+5}\nabla P_KI\e^{it\Delta}f_0}^2}^\frac{1}{2}}_{L_t^2L_x^r}\,.
\end{multline*}
By the improved radial Sobolev embedding~\eqref{eq:radialish:r}, we have
\[
\normbig{\japbrak{x}^{1-\frac{2}{r}}\parentbig{\sum_{\substack{k\in\Z^3\\\vert k\vert\sim K}}\abs{Q_k\chi_{\leq j+5}\nabla P_KI\e^{it\Delta}f_0}^2}^\frac{1}{2}}_{L_t^2L_x^r}
        \lesssim_\delta K^\delta\norm{\chi_{\leq j+5}\abs{\nabla}P_KI\e^{it\Delta}f_0}_{L_t^2L_x^2}\,.
\]
Next, we apply the local smoothing estimate~\eqref{eq:local:smoothing1} to see that the above line is controlled by
\[
\normbig{\parentbig{\sum_{\substack{k\in\Z^3\\\vert k\vert\sim K}}\abs{\chi_jQ_k\chi_{\leq j+5}\nabla P_KI\e^{it\Delta}f_0}^2}^\frac{1}{2}}_{L_t^2L_x^r}\lesssim_\delta 2^{-j(1-\frac{2}{r}-\frac{1}{2})}K^{\frac{1}{2}+\delta}\norm{P_KIf_0}_{L_x^2}\,.
\]
The above expression can be summed over $j$ provided that $r>4$. In addition, we observe that if $K>2N$, there exists $\gamma>0$ such that
\[K^{\frac{1}{2}+\delta}\norm{P_KIf_0}_{L_x^2}\leq N^{1-\sigma}K^{\sigma-\frac{1}{2}-s+\delta}\norm{P_Kf_0}_{H_x^s}\,.\] 
Indeed, Plancherel estimate and the definition of the $I$-multiplier~\eqref{eq:def:I} yield 
\begin{multline*}
   K^{1+2\delta}\norm{P_KIf_0}_{L_x^2}^2=K^{1+2\delta}\int_{2^{-1}K<\abs{\xi}<2K}\parent{\frac{N}{\vert\xi\vert}}^{2(1-\sigma)}\abs{\widehat{P_Kf}_0(\xi)}^2d\xi\\
   \lesssim N^{2(1-\sigma)}K^{2(\sigma-1)+1-2s+2\delta}\norm{P_Kf_0}_{H_x^s}^2\,,
\end{multline*}
When $K\leq 2N$, we directly get that
\[\norm{\abs{\nabla}^{\frac{1}{2}+\delta}P_KIf_0}_{L_x^2}\leq N^{\frac{1}{2}-s+\delta}\norm{P_Kf_0}_{H_x^s}\,.\]
We now sum over $K$ in estimate~\eqref{eq:large-deviation-bernstein} and get that for all $r>4$ and $\delta>0$,~\footnote{In the case when we look at the $L_t^2L_x^\infty$ norm, we need to add a $K$ to the power $\frac{3}{r}$, for $r$ arbitrarily large, that comes from the Sobolev embedding from $W^{\frac{3}{r},r}(\R^3)$ to $L^\infty(\R^3)$. This term is harmless, and can be absorbed in the residual power $K^\delta$.}
\begin{multline*}
\norm{\nabla\e^{it\Delta}IF}_{L_\omega^pL_t^2L_x^\infty}\lesssim_{\delta,r}\sqrt{p}\norm{P_\leq1f_0}_{L_x^2}+\sqrt{p}\sum_{\substack{K\in2^{\N^*}\\K\leq 2N}}K^{\frac{1}{2}-s+\delta}\norm{P_Kf_0}_{H_x^s}\\
+\sqrt{p}N^{1-\sigma}\sum_{\substack{K\in2^{\N^*}\\K>2N }}K^{\sigma-\frac{1}{2}-s+\delta}\norm{P_Kf_0}_{H_x^s}\,. 
\end{multline*}
Define $\gamma_0(s,\sigma)=s-\frac{\sigma}{2}>0$ such that $\frac{1}{2}-s=\frac{1-\sigma}{2}-\gamma_0\,$.~\footnote{\ Recall the assumption that $s\leq 1/2$ and $\sigma<2s$.} Next, we chose $\delta$ and $\gamma_0$ such that
\begin{equation}
\label{eq:ass-r-delta}
\sigma-\frac{1}{2}-s+\delta<0,\quad \text{and}\ \frac{1}{2}-s\leq \frac{1-\sigma}{2}-\gamma_0\,.
\end{equation}
By using Cauchy Schwarz, we conclude that 
\[
\norm{\nabla\e^{it\Delta}I\e{it\Delta}f_0^\omega}_{L_\omega^pL_t^2L_x^\infty}\lesssim_{\delta,r}\sqrt{p} N^{\frac{1}{2}-s+\delta}\norm{P_Kf_0}_{H_x^s}\,.
\]
This ends the proof of Proposition~\ref{prop:ass:data:radial}.
\end{proof}
We deduce from the above large deviation estimates the following bounds on the linear evolution of the randomized initial data.
\begin{proposition}
For all $\alpha>0$, there exists a set $\widetilde{\Omega}_\alpha\subset\Omega_\alpha$, with $\mathbb{P}(\Omega_\alpha\setminus\widetilde{\Omega}_\alpha)\leq\alpha$, a constant $C_\alpha>0$ and $N_0(\alpha)$ such that for all $\omega\in\Omega_\alpha$, and all $N\geq N_0$ we have
\begin{equation}
\label{eq:ass:data:2}
F_2^\omega(\R)\leq C_\alpha N^\frac{1-\sigma}{2}\,. 
\end{equation}
\end{proposition}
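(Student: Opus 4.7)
The plan is to apply the large deviation bound~\eqref{eq:large-deviation-2} at a single dyadic scale $N$ with the critical threshold $\lambda=A_\alpha N^{(1-\sigma)/2}$, and then to promote the result to a uniform-in-$N$ estimate via a union bound over the dyadic scales $N\geq N_0$. For this choice of $\lambda$, the Gaussian exponent in~\eqref{eq:large-deviation-2} becomes $\exp(-c\, A_\alpha^2\, N^{2\delta}\,\norm{f_0}_{H^s}^{-2})$, and the small positive gap $\delta>0$ is precisely what makes the dyadic sum summable.

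First I would fix $\delta>0$ small (depending only on $s,\sigma$ via~\eqref{eq:ass-r-delta}) so that Proposition~\ref{prop:ass:data:radial} applies simultaneously at $r=6$ and $r=+\infty$. This gives, for every dyadic $N\geq 1$ and every such $r$,
\begin{equation*}
\mathbb{P}\parent{\norm{\nabla I\e^{it\Delta}f_0^\omega}_{L_t^2 L_x^r(\R)} > A_\alpha N^{\frac{1-\sigma}{2}}}\,\leq\, C\exp\parent{-c\, A_\alpha^2\, N^{2\delta}\,\norm{f_0}_{H^s}^{-2}}\,.
\end{equation*}
The identity part of $\japbrak{\nabla}$ produces an analogous bound: after a Littlewood--Paley decomposition, the low-frequency piece is handled by the probabilistic Strichartz estimate of Lemma~\ref{lemma:rand-zero}, while on each dyadic shell $K\geq 1$ Bernstein reduces $\norm{P_K I \e^{it\Delta}f_0^\omega}_{L_t^2 L_x^r}$ to a factor $K^{-1}$ times the $\nabla I$ bound, and the result sums.

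Setting $C_\alpha\coloneqq 4A_\alpha$, I would then take
\begin{equation*}
\widetilde{\Omega}_\alpha\,\coloneqq\, \Omega_\alpha\cap\bigcap_{\substack{N\in 2^\N\\ N\geq N_0}}\setbig{\omega\in\Omega\mid F_2^\omega(\R)\leq C_\alpha N^{\frac{1-\sigma}{2}}}\,,
\end{equation*}
and a union bound over dyadic $N=2^n\geq N_0$ and over $r\in\{6,+\infty\}$ yields
\begin{equation*}
\mathbb{P}(\Omega_\alpha\setminus\widetilde{\Omega}_\alpha)\,\leq\, 4C\sum_{n\,:\,2^n\geq N_0}\exp\parent{-c\, A_\alpha^2\, 2^{2\delta n}\,\norm{f_0}_{H^s}^{-2}}\,.
\end{equation*}
The summand decays double-exponentially in $n$, so the series is dominated by its first term and can be made smaller than $\alpha$ by choosing $N_0=N_0(\alpha,\delta,\norm{f_0}_{H^s})$ large enough. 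I do not anticipate any genuine difficulty here: the technical core is entirely inside Proposition~\ref{prop:ass:data:radial}, and what remains amounts to a routine summability argument for Gaussian tails over dyadic scales.
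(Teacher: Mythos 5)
Your proposal is correct and follows essentially the same route as the paper: both rest on the large deviation estimate~\eqref{eq:large-deviation-2}, whose extra factor $N^{-\delta}$ is precisely what lets the threshold $\lambda\sim N^{\frac{1-\sigma}{2}}$ produce a tail decaying in $N$, hence a bound uniform over dyadic $N\geq N_0$ after intersecting the corresponding events. The paper's proof is simply terser — it takes $\lambda=C_\alpha N^{0-}N^{\frac{1-\sigma}{2}}\norm{f_0}_{H_x^s}$ with $C_\alpha^2\geq c^{-1}\ln(C\alpha^{-1})$ and leaves the dyadic union bound and the routine reduction of $\japbrak{\nabla}I$ to $\nabla I$ plus a low-frequency piece implicit, which you spell out explicitly.
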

\begin{proof}
Estimate~\eqref{eq:ass:data:2} follows from the large deviation estimate~\eqref{eq:large-deviation-2} by choosing 
\[\lambda=C_\alpha N^{0-}N^\frac{1-\sigma}{2}\norm{f_0}_{H_x^s}\,,\quad C_\alpha^2\geq c^{-1}\ln(C\alpha^{-1})\,.\qedhere\]
\end{proof}
\section{Cauchy theory for the forced NLS equation}
The main step of this section is to come with a conditional scattering result for the solutions $v$ to the perturbed Schrödinger equation with a random forcing term~\eqref{eq:nls:f}. More precisely, we prove that an a priori uniform estimate of the $H^\sigma$ norm of $v$ on its maximal lifespan yields global existence and scattering, when $2/3<\sigma\leq1$. To prove such a result, we essentially follow the same lines as in the proof of Proposition 3.1 from~\cite{killip-murphy-visan-2019}. However, since $v$ lies below the energy space, we need to develop a stability theory in $H^\sigma$. For this, given a solution $v$ to~\eqref{eq:nls}, a time $t_0$ and a solution $u$ to~\eqref{eq:nls} with $u(t_0)=v(t_0)$,  we appeal to the global Cauchy theory in such subcritical regimes provided by~\cite{ckstt-2004,dodson-13}, that claims that $u$ enjoys the spacetime global estimate 
\[
\norm{u}_{L_{t,x}^5}\leq C(\norm{u_0}_{H^\sigma})\,.
\]
With this global spacetime bound at hand, we can use sub-additivity and reduce the analysis to intervals $J$ where we have a smallness assumption on the $L_{t,x}^5(J)$ norm of $u$, which is scaling-critical, and hence on $v$ which is expected to stay close to $u$. For this reason, we are led to refine the trilinear stochastic estimates from~\cite{benyi-oh-pocovnicu-2019}, and to estimate the Duhamel nonlinear term not only by the critical norm $X^\sigma(J)$, which is not small, but also by some powers of the norm  $L_{t,x}^5(J)$ of $v$ to gain smallness. This is the matter of the nonlinear estimates from Propositions~\ref{prop:trilinear} and~\ref{prop:trilinear:5}. Throughout this section, we fix $\alpha>0$ and $\omega\in\Omega_\alpha$ such that 
\[
F^\omega(\R)\leq C_\alpha\,.
\]
In particular, we do not use the quantity $F_2^\omega(\R)$, so that we can release the radial assumption on $f_0$.
\subsection{Nonlinear estimates}
Before diving into the nonlinear analysis, let us detail how to address the Littlewood-Paley summation of $L^q$-norms of dyadic blocs $u_N$, for a given function $u$ that comes with a gain of regularity materialized by the presence of a negative power of $N$ in front of each $u_N$.   
\begin{observation}
\label{observation:sum}
Let $q>2$ and $u$ in $L_t^q(J;W_x^{\gamma,q}(\R^3))$ for some $\gamma\in\R$. We denote by $u\sim \sum_{N\in2^\N}u_N$ the Littlewood-Paley decomposition of $u$. We have
\begin{equation}
\label{eq:observation-q}
\sum_{N\in2^\N}N^{0-}N^\gamma\norm{u_N}_{L_{t,x}^q(J\times\R^3)}\lesssim \norm{u}_{L_t^q(J;W_x^{\gamma,q}(\R^3))}.
\end{equation}
\end{observation}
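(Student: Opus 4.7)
The plan is to exploit the small gain $N^{0-} = N^{-\varepsilon}$ (for some $\varepsilon>0$) via Cauchy--Schwarz in the dyadic frequency parameter, and then reduce to the Littlewood--Paley square function characterization of $W^{\gamma,q}$. This is the standard trick that lets one sum $\ell^1$-style dyadic estimates in $L^q$ for $q>2$ whenever a tiny power of $N$ can be spent.

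First, I would write $N^{0-}=N^{-\varepsilon}$ with $\varepsilon>0$ chosen so that $\sum_{N\in 2^\N}N^{-2\varepsilon}<\infty$, and apply Cauchy--Schwarz on the sum:
\[
\sum_{N\in 2^\N}N^{-\varepsilon}N^{\gamma}\norm{u_N}_{L^q_{t,x}(J\times\R^3)}
\leq
\parentbig{\sum_{N\in 2^\N}N^{-2\varepsilon}}^{1/2}
\parentbig{\sum_{N\in 2^\N}N^{2\gamma}\norm{u_N}_{L^q_{t,x}(J\times\R^3)}^2}^{1/2}.
\]
The first factor is a harmless constant, so it remains to estimate the weighted $\ell^2$ square sum of the dyadic norms by $\|u\|_{L^q_tW^{\gamma,q}_x}$.

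Second, since $q\geq 2$, Minkowski's inequality applied pointwise in $(t,x)$ exchanges the $\ell^2_N$ and $L^q_{t,x}$ norms in the right direction:
\[
\parentbig{\sum_{N\in 2^\N}N^{2\gamma}\norm{u_N}_{L^q_{t,x}}^2}^{1/2}
\leq
\normbig{\parentbig{\sum_{N\in 2^\N}N^{2\gamma}\abs{u_N}^2}^{1/2}}_{L^q_{t,x}(J\times\R^3)}.
\]
Then I would apply Fubini and the Littlewood--Paley square function characterization of the inhomogeneous Sobolev space $W^{\gamma,q}_x(\R^3)$, valid for $1<q<\infty$, to conclude that the right-hand side is controlled by $\norm{u}_{L^q_t(J;W^{\gamma,q}_x(\R^3))}$. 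Chaining the three displays yields the claim.

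There is no real obstacle here: the only condition actually used is $q>2$, which is needed both to have $\ell^2\hookrightarrow\ell^q$ via Minkowski (in fact $q\ge 2$ suffices for that step) and, more subtly, to ensure the series $\sum N^{-2\varepsilon}$ converges for some $\varepsilon>0$ compatible with the notation $N^{0-}$. The endpoint $q=2$ would not need Cauchy--Schwarz at all, while for $q<2$ Minkowski would go the wrong way and the statement would fail in general; this is precisely why the hypothesis $q>2$ is imposed.
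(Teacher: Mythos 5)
Your proof breaks at the Minkowski step: the inequality is written in the wrong direction. For $2\leq q$, generalized Minkowski gives
\[
\normbig{\parentbig{\sum_{N}N^{2\gamma}\abs{u_N}^2}^{1/2}}_{L^q_{t,x}}\leq\parentbig{\sum_{N}N^{2\gamma}\norm{u_N}_{L^q_{t,x}}^2}^{1/2},
\]
i.e.\ $L^q_{t,x}(\ell^2_N)\leq\ell^2_N(L^q_{t,x})$, the opposite of what you wrote. The deeper issue is the choice of Cauchy--Schwarz in the first step: it lands you on $\|N^\gamma u_N\|_{\ell^2_N(L^q)}$, which is the Besov norm $B^\gamma_{q,2}$. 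For $q>2$ one has $B^\gamma_{q,2}\hookrightarrow F^\gamma_{q,2}=W^{\gamma,q}$, i.e.\ $\|u\|_{W^{\gamma,q}}\lesssim\|u\|_{B^\gamma_{q,2}}$, and the reverse bound you need is false. Concretely, with $\gamma=0$, $q=4$, and $K$ spatially well-separated frequency-localized bumps $u_N$ each of unit $L^4$ norm, $\bigl(\sum\|u_N\|_{L^4}^2\bigr)^{1/2}=K^{1/2}$ while $\|u\|_{L^4}\sim K^{1/4}$, so no constant can close the estimate.

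The paper's proof avoids this by applying H\"older with the dual pair $(q',q)$ rather than $(2,2)$: the gain $N^{0-}$ is spent on the $\ell^{q'}$ factor, which leaves $\|N^\gamma u_N\|_{\ell^q_N(L^q_{t,x})}=\|N^\gamma u_N\|_{L^q_{t,x}(\ell^q_N)}$ by Fubini; then the pointwise embedding $\ell^2\hookrightarrow\ell^q$ (valid since $q\geq 2$) produces the square-function norm $\|N^\gamma u_N\|_{L^q_{t,x}(\ell^2_N)}\lesssim\|u\|_{L^q_tW^{\gamma,q}_x}$. In Besov terms the intermediate space is $B^\gamma_{q,q}$, into which $W^{\gamma,q}=F^\gamma_{q,2}$ does embed for $q\geq 2$. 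The fix to your argument is therefore simple: replace Cauchy--Schwarz by H\"older with exponents $(q',q)$ and keep the rest of the architecture.
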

\begin{proof}
From Hölder, we have
\begin{multline*}
       \sum_{N\in2^\N}N^{0-}N^{\gamma}\norm{u_N}_{L_{t,x}^q(J\times\R^3)}\leq \parent{\sum_{N\in2^\N}N^{0-}}^\frac{1}{q'}\parent{\sum_{N\in2^\N}\parent{N^\gamma\norm{u_N}_{L_{t,x}^q(J\times\R^3)}}^q}^\frac{1}{q}
  \\ \lesssim \norm{N^\gamma u_N}_{\ell_N^q(2^\N;L_{t,x}^q(J\times\R^3))}=\norm{N^\gamma u_N}_{L_{t,x}^q(J\times\R^3;\ell_N^q(2^\N))}\,.
\end{multline*}
Since $q\geq2$, we obtain
\[
\norm{N^\gamma u_N}_{L_{t,x}^q(J\times\R^3;\ell_N^q(2^\N))}\leq \norm{N^\gamma u_N}_{L_{t,x}^q(J\times\R^3;\ell_N^2(2^\N))}\,.
\]
We conclude the proof of~\eqref{eq:observation-q} by applying the Littlewood-Paley square function theorem
\[
\norm{u_N}_{L_{t,x}^q(J\times\R^3;\ell_N^2(2^\N))}=\normbig{\parentbig{\sum_{N\in2^\N}\abs{u_N}^2}^\frac{1}{2}}_{L_{t,x}^q}\lesssim \norm{ u}_{L_{t,x}^q(J\times\R^3)}\,.
\qedhere\]
\end{proof}
Now, we establish quadrilinear estimates that involve three types of terms. First, we have the stochastic forcing terms of type $f=\e^{it\Delta}f_0^\omega$, for which we proved improved Strichartz estimates. More precisely, we have that $F^\omega(\R)<+\infty$, and using sub-additivity, we will reduce the analysis to a finite number of intervals where $F^(J)$ is small. Then, we have the term $v$ solution to the forced Schrödinger equation~\eqref{eq:nls:f} at the subcritical regularity $H^{\sigma}$, and that corresponds to the nonlinear Duhamel term for the solution $u=\e^{it\Delta}f_0^\omega+v$. We want to obtain a priori estimates for $v$ in the spacetime spaces $X^\sigma$ and $L_{t,x}^5$, at least, on some intervals where there holds a smallness assumption on the forcing term. Finally, the terms $w\in Y^0$ that appear in the analysis come from duality.
\begin{proposition}[Trilinear estimates with random terms]
\label{prop:trilinear}
Let $1/4<s\leq 1/2$ and $1/2<\sigma<2s$. Denote by $f\omega=\e^{it\Delta}f_0^\omega$ the linear evolution of the randomized initial data. There exists a constant $C(\norm{f_0}_{H^\sigma})>0$ such that for all interval $J\subseteq\R$ where $F^\omega(J)\leq1$, and all $v\in X^\sigma(J)$ with $\norm{v}_{L_{t,x}^5(J)}\leq1$, it holds
\begin{align}
    \label{eq:tri-fff}
    \norm{\mathcal{I}\parent{\cdot,\mathcal{N}(f,f,f)}}_{X^\sigma(J)}&\leq C(\norm{f_0}_{H^\sigma})F^\omega(J)\,,\\
    \label{eq:tri-ffv}
    \norm{\mathcal{I}\parent{\cdot,\mathcal{N}(f,f,v)}}_{X^\sigma(J)}&\leq C(\norm{f_0}_{H^\sigma})F^\omega(J)\norm{v}_{X^\sigma(J)}\,,\\
    \label{eq:tri-fvv}
    \norm{\mathcal{I}\parent{\cdot,\mathcal{N}(f,v,v)}}_{X^\sigma(J)}&\leq C(\norm{f_0}_{H^\sigma})\parent{F^\omega(J)+\norm{v}_{L_{t,x}^5(J)}F^\omega(J)^\frac{1}{2}}\norm{v}_{X^\sigma(J)}\,.
\end{align}
\end{proposition}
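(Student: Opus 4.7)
The plan is to prove each of the three estimates by duality combined with a Littlewood-Paley paraproduct analysis. By the dyadic definition~\eqref{def:bourgain-spaces} of $X^\sigma(J)$ together with the duality formula~\eqref{eq:duality:duhamel}, for any trilinear expression $\mathcal{N}(g_1,g_2,g_3)$ we have
\begin{equation*}
\norm{\mathcal{I}\parent{\cdot,\mathcal{N}(g_1,g_2,g_3)}}_{X^\sigma(J)}^2\lesssim \sum_{N_0\in 2^{\N}}N_0^{2\sigma}\sup_{\norm{w_{N_0}}_{V_\Delta^2}\leq 1}\absbig{\iint_{J\times\R^3} P_{N_0}\mathcal{N}(g_1,g_2,g_3)\,\overline{w_{N_0}}\,dx\,dt}^2\,,
\end{equation*}
since $P_{N_0}$ commutes with the Duhamel operator $\mathcal{I}$. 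After a Littlewood-Paley decomposition of each $g_i=\sum_{N_i}P_{N_i}g_i$, the frequency localization of $\mathcal{N}$ restricts the sum to configurations $N_0\lesssim\max(N_1,N_2,N_3)$, and by the symmetry of the conjugation we may order $N_1\geq N_2\geq N_3$. In each resulting quadrilinear pairing the strategy is to apply the transferred bilinear Strichartz estimate~\eqref{eq:bili:transf} to one carefully chosen pair, producing the decisive gain $N_\mathrm{min}N_\mathrm{max}^{-1/2+}$, and to control the other pair in $L^2_{t,x}$ via Hölder against the spacetime norms collected in $F^\omega(J)$, the transferred Strichartz embedding~\eqref{eq:strichartz:embedding}, and unit-scale Bernstein on the Wiener blocks. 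The strict inequality $s<\sigma$ gives room to absorb the $N^{0+}$ loss in bilinear Strichartz, and the summation of the residual $N^{0-}$ factors over dyadic frequency blocks of $f$ is handled by Observation~\ref{observation:sum}.

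For~\eqref{eq:tri-fff}, all three $g_i$ are the rough term $f^\omega$ at regularity~$s$. I would place two factors (either two copies of $f$, or one $f$ and the test function $w_{N_0}$) inside one bilinear Strichartz, and handle the remaining pair by Hölder using two of the $F^\omega(J)$-norms $\norm{\japbrak{\nabla}^sf^\omega}_{L^{10}_{t,x}}$, $\norm{\japbrak{\nabla}^sf^\omega}_{L^4_{t,x}}$, $\norm{\japbrak{\nabla}^sf^\omega}_{L^5_{t,x}}$, $\norm{\japbrak{\nabla}^sf^\omega}_{L^4_tL^{12}_x}$. For~\eqref{eq:tri-ffv} one $f$-factor is replaced by $v$: when $v$ carries the maximal frequency $N_1$, the weight $N_1^{-\sigma}\norm{P_{N_1}v}_{V_\Delta^2}$ reassembles into $\norm{v}_{X^\sigma(J)}$ via Cauchy-Schwarz against the $N_0^\sigma$-weighted test functions, while the two $f$-factors produce the full factor $F^\omega(J)$; when $v$ is not at the maximal frequency, the embedding $X^\sigma(J)\hookrightarrow L^\infty_tH^\sigma_x$ together with transferred Strichartz allows us to place $v$ in a Lebesgue norm instead.

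The main obstacle is~\eqref{eq:tri-fvv}, where the right-hand side must split as $F^\omega(J)\norm{v}_{X^\sigma(J)}+\norm{v}_{L^5_{t,x}(J)}F^\omega(J)^{1/2}\norm{v}_{X^\sigma(J)}$. The first contribution is obtained in frequency regimes where one can place $f$ entirely in a single $F^\omega(J)$-type norm (for instance via a Hölder partition of $L^{10}_{t,x}\cdot L^{10/3}_{t,x}\cdot L^{10/3}_{t,x}\cdot L^{10/3}_{t,x}$ type with both $v$-factors bounded by their $X^\sigma(J)$-Strichartz embeddings. The second, smaller-looking contribution is forced by those frequency regimes where the only way to avoid a derivative loss on $v$ is to apply a bilinear Strichartz to a pair containing $f$: the free-wave $V_\Delta^2$-norm of $f$ that comes out is not controlled by $F^\omega(J)$ alone, so one splits it through Cauchy-Schwarz between two of the $F^\omega(J)$-norms to produce the square root $F^\omega(J)^{1/2}$, and one of the two $v$-factors is placed in the critical norm $L^5_{t,x}$ to absorb the residual integrability. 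The remaining $v$-factor is always routed to $X^\sigma(J)$, and in each sub-case the dyadic summation is closed exactly as in the previous paragraph. The delicate point is matching the frequency configurations with the correct Hölder partitions so that both pieces of the right-hand side appear naturally and the $\ell^2_{N_0}$-sum over test functions recombines after applying Cauchy-Schwarz.
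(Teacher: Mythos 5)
Your skeleton --- duality with $V_\Delta^2$ test functions via~\eqref{eq:duality:duhamel}, Littlewood--Paley decomposition of all four factors, the transferred bilinear estimate~\eqref{eq:bili:transf} combined with H\"older against the norms collected in $F^\omega(J)$, and Observation~\ref{observation:sum} to close the dyadic sums --- is exactly the paper's proof, and your prediction of the structure of~\eqref{eq:tri-fvv} (one $v$-factor in $L^5_{t,x}$, a half power of $F^\omega$) is also how the paper closes the hardest cases. However, two of your quantitative claims are wrong, and they sit precisely where the content of the proposition lies. First, the room that makes the dyadic sums converge is \emph{not} the inequality $s<\sigma$ (which goes the wrong way: the output regularity exceeds that of $f$); it is $\sigma<2s$, together with $s>1/4$, that produces the negative powers of the top frequency on which every summation rests ($N_{(1)}^{\sigma-2s}$ in the $fff$ cases, and $N_{(1)}^{\sigma/2-s}$, $N_{(1)}^{1/4-s}$ in the mixed cases). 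Second, the mechanism you give for the factor $F^\omega(J)^{1/2}$ is off: the $U_\Delta^2/V_\Delta^2$ norm of the free evolution is not something one ``splits between two $F^\omega$-norms'' (it cannot be controlled by them at all); since $f$ is a free wave, $\norm{P_{N_1}f}_{U_\Delta^2}=\norm{P_{N_1}f_0}_{L_x^2}$, and this is simply absorbed into the constant $C(\norm{f_0}_{H^s})$.

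The half power of $F^\omega$ arises instead by interpolating the $L^2_{t,x}$ pairing that contains $f$ between its bilinear-Strichartz bound and a H\"older bound of the form $\norm{\japbrak{\nabla}^sf_{N_1}}_{L^5_{t,x}}\norm{v_{N_j}}_{L^{10/3}_{t,x}}$ (in the high--low--low regime the paper interpolates the second pairing $v_{N_2}\overline{w_N}$ as well, taking half powers of two bilinear estimates); this is what simultaneously produces $F^\omega(J)^{1/2}$, the factor $\norm{v}_{L^5_{t,x}}$, and the residual negative power of $N_{(1)}$. Note that if, as your plan literally prescribes, one applies the full bilinear estimate to the single pair containing $f$ and plain H\"older to the rest, the resulting bound in that regime is $N^{0-}\norm{f_0}_{H^s}\norm{v}_{L^5_{t,x}}\norm{v}_{X^\sigma}$ with no factor of $F^\omega$ at all, so the right-hand side of~\eqref{eq:tri-fvv} as stated is not obtained; the half-power interpolation is the missing step. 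With these two corrections the case-by-case bookkeeping goes through as in the paper.
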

In particular, the above trilinear estimates yield the following estimate for the forcing term of~\eqref{eq:nls:f} at the regularity $H_x^\sigma$:
\begin{equation}
    \label{eq:trilinear-error}
    \norm{\mathcal{I}\parent{\cdot,\mathcal{N}(v+f)-\mathcal{N}(v)}}_{X^\sigma(J)}\lesssim C(\norm{f_0}_{H^s})F^\omega(J)+C\set{F^\omega(J)+\norm{v}_{L_{t,x}^5(J)}F^\omega(J)^\frac{1}{2}}\norm{v}_{X^\sigma(J)}\,.
\end{equation}
\begin{proof}
In the following, we take all the spacetime norms over $J\times\R^3$. Using the duality between $U^2$ and $V^2$ (see Proposition~\ref{proposition:dual:U2}), we have~\footnote{\ To apply Proposition~\ref{proposition:dual:U2} and to be in position to apply formula~\eqref{eq:duality:duhamel}, we need to make sure that $\mathcal{N}(u)$ is in $L_t^1(J;H_x^\sigma)$. We omit this short verification which is essentially contained in the analysis, and we refer to~\cite{benyi2015} where the authors prove that $P_{\leq N}\mathcal{N}(u)\in L_t^1(J;H_x^\sigma)$ for any $N\in2^\N$.} 
\[\norm{v}_{X^\sigma(J)}^2\leq \sum_N \mathcal{I}_N^2\,,\]
where, denoting $w^{(i)}\in\{v,f\}$ for $i\in\{1,2,3\}$, we have that for all $v\in X^\sigma(J)$
\[\mathcal{I}_N\coloneqq N^\sigma\norm{P_Nv}_{U_\Delta^2(J)}\leq CN^\sigma\underset{\norm{w}_{V_\Delta^2(J)}\leq1}{\sup}\absbig{\iint_{J\times\R^3}P_N\parent{w^{(1)}\overline{w}^{(2)}w^{(3)}}\overline{w}dxdt}\,.\]
Given a fixed function $w$ with $\norm{w}_{V_\Delta^2(J)}\leq1$, we perform a Littlewood-Paley decomposition of each term~\footnote{Note that the sums over $N_1,N_2,N_3$ under the integral are absolutely convergent in $\C$. Hence, we can intertwine the sum and the integral, and then apply the triangle inequality to obtain,~\eqref{eq:multi-int}.} and we are left to estimate quadrilinear spacetime integrals of the form
\begin{equation}
    \label{eq:multi-int} \mathcal{I}_N\leq\sum_{L=(N_1,N_2,N_3)}N_{(1)}^\sigma\absbig{\iint_{J\times\R^3}w_{N_1}^{(1)}\overline{w}_{N_2}^{(2)}w_{N_3}^{(3)}\overline{w_N}dxdt}\eqqcolon \sum_{L=(N_1,N_2,N_3)}\mathcal{I}_N(L)\,.
\end{equation}
For dyadic integers $N_1,N_2,N_3$, we denote by $N_{(1)} \geq N_{(2)} \geq N_{(3)}$ the non-increasing
ordering among them, and we use the shorthand notation $w_{N_i}\coloneqq P_{N_i}w^{(i)}\,, \ i\in\{1,2,3\}$. Note that it suffices to consider the nontrivial cases, where $N\lesssim N_{(1)}$. To be able to sum the terms $\mathcal{I}_N$ over $N$, we need to gain a negative power of $N$ after summing $\mathcal{I}_N^L$ over $L$. Hence, we shall bound the above quadrilinear integrals by some negative power of the highest frequency $N_{(1)}$, and by some appropriate norms of each $w_i$. Then, we proceed as in observation~\ref{observation:sum}.
\paragraph{\textbf{Three random terms}} Here we address the first Duhamel iteration, that is 
\[-i\int_0^t\e^{i(t-t')\Delta}\mathcal{N}(f,f,f)(t')dt'\,,\]
and we prove~\eqref{eq:tri-fff}. By symmetry, we can assume that $N_1\geq N_2 \geq N_3$ without loss of generality.
\begin{enumerate}[]
    \item \textbf{Case 1: High-high-high} $N_1^\frac{1}{2}\lesssim N_3$. Applying Hölder with $1=\frac{3}{10}+\frac{7}{10}$ yields
    \begin{multline*}
         \abs{\eqref{eq:multi-int}}= N_1^\sigma \absbig{\iint_{J\times\R^3}f_{N_1}\overline{f_{N_2}}f_{N_3}w_Ndxdt}\\
         \lesssim N_1^{\sigma-s} N_2^{-s}N_3^{-s} \prod_{1\leq j\leq3}\norm{\japbrak{\nabla}^sf_{N_j}}_{L_{t,x}^{30/7}}\norm{w_N}_{L_{t,x}^{10/3}}\\
         \lesssim N_1^{\sigma-2s}\prod_{1\leq j\leq3}\norm{\japbrak{\nabla}^sf_{N_j}}_{L_{t,x}^{10/3}{30}{7}}\norm{w_N}_{V_\Delta^2}\,.
    \end{multline*}
    Recall that $\sigma<2s$. Hence, we have a negative power of the highest frequency, and we can proceed as in observation ~\ref{observation:sum} to obtain
    \[
    \sum_{N_1,N_2,N_3,N}N_1^\sigma \absbig{\iint_{J\times\R^3}f_{N_1}\overline{f_{N_2}}f_{N_3}w_Ndxdt}\lesssim \sum_N N^{0-} \norm{\japbrak{\nabla}^sf}_{L_{t,x}^{10/3}}^3\norm{w_N}_{V_\Delta^2}\,.
    \]
    When we eventually sum over $N$, we apply Cauchy-Schwarz and use that $\norm{v}_{Y^0}\leq1$ to conclude that 
    \[
    \sum_N N^{0-}\norm{w_N}_{V_\Delta^2}\lesssim \parent{\sum_N\norm{w_N}_{V_\Delta^2}^2}^\frac{1}{2}\lesssim 1\,.
    \]
    We conclude similarly in the other cases.
    \item \textbf{Case 2: High-low-low} $N_2\leq N_1^{1/2}$. We apply Cauchy-Schwarz, we use interpolation and the bilinear estimate~\eqref{eq:bili} in its transferred version~\eqref{eq:bili:transf} to obtain
\begin{equation*}
\begin{split}
&\abs{\eqref{eq:multi-int}}= N_1^\sigma \absbig{\iint_{J\times\R^3}f_{N_1}\overline{f_{N_2}}f_{N_3}w_Ndxdt}
\lesssim
N_1^\sigma\norm{f_{N_1}f_{N_2}}_{L_{t,x}^2}\norm{f_{N_3}w_N}_{L_{t,x}^2}\\
&\lesssim N_1^\sigma N_1^{-\frac{1}{4}}N_2^\frac{1}{2}\norm{f_{N_1}(0)}_{L_x^2}^\frac{1}{2}\norm{f_{N_2}(0)}_{L_x^2}^\frac{1}{2}\norm{f_{N_1}}_{L_{t,x}^4}^\frac{1}{2}\norm{f_{N_2}}_{L_{t,x}^4}^\frac{1}{2}N^{+0}N^{-\frac{1}{2}}N_3\norm{f_{N_3}(0)}_{L_x^2}\norm{w_N}_{V_\Delta^2}\\
&\lesssim N_1^{\sigma-s}N^{-\frac{1}{2}+0}N_2^{\frac{1}{2}-s}N_3^{1-s}\norm{f_{N_1}(0)}_{H_x^s}^\frac{1}{2}\norm{f_{N_2}(0)}_{H_x^s}^\frac{1}{2}\norm{f_{N_3}(0)}_{H_x^s}\norm{\japbrak{\nabla}^sf_{N_1}}_{L_{t,x}^4}^\frac{1}{2}\norm{\japbrak{\nabla}^sf_{N_2}}_{L_{t,x}^4}^\frac{1}{2}\norm{w_N}_{V_\Delta^2}\\
&\lesssim N_1^{\sigma-2s+0}\norm{f_{N_1}(0)}_{H_x^s}^\frac{1}{2}\norm{f_{N_2}(0)}_{H_x^s}^\frac{1}{2}\norm{f_{N_3}(0)}_{H_x^s}\norm{\japbrak{\nabla}^sf_{N_1}}_{L_{t,x}^4}^\frac{1}{2}\norm{\japbrak{\nabla}^sf_{N_2}}_{L_{t,x}^4}^\frac{1}{2}\norm{w_N}_{V_\Delta^2}\,.
\end{split}
\end{equation*}
As in the first case, we use that $\sigma<2s$ and we sum over the $N_j$'s to get
     \[
    \sum_{N_1,N_2,N_3,N}N_1^\sigma \absbig{\iint_{J\times\R^3}f_{N_1}\overline{f_{N_2}}f_{N_3}w_Ndxdt}\lesssim  \norm{\japbrak{\nabla}^sf}_{L_{t,x}^4}\norm{f_0}_{H_x^s}^2\,.
    \]
    \item \textbf{Case 3: High-high-low} $N_3\leq N_1^\frac{1}{2}\leq N_2$. Similarly, it follows from Hölder's inequality, from the bilinear estimate~\eqref{eq:bili} and from the Strichartz embedding~\eqref{eq:strichartz:embedding} that
\begin{equation*}
\begin{split}
    &\absbig{\iint_{J\times\R^3}f_{N_1}\overline{f_{N_2}}f_{N_3}w_Ndxdt}
        \lesssim N_1^\sigma\norm{f_{N_1}f_{N_3}}_{L_{t,x}^2}\norm{f_{N_2}}_{L_{t,x}^5}\norm{w_N}_{L_{t,x}^{10/3}}\\
        &\lesssim N_1^{\sigma-\frac{1}{4}-s}N_3^{\frac{1}{2}-s}\norm{f_{N_1}(0)}_{H_x^s}^\frac{1}{2}\norm{f_{N_3}(0)}_{H_x^s}^\frac{1}{2}\norm{\japbrak{\nabla}^sf_{N_1}}_{L_{t,x}^4}^\frac{1}{2}\norm{\japbrak{\nabla}^sf_{N_3}}_{L_{t,x}^4}^\frac{1}{2}N_2^{-s}\norm{\japbrak{\nabla}^sf_{N_2}}_{L_{t,x}^5}\norm{w_N}_{V_\Delta^2}\\
        &\lesssim N_1^{\sigma-2s}\norm{f_{N_1}(0)}_{H_x^s}^\frac{1}{2}\norm{f_{N_3}(0)}_{H_x^s}^\frac{1}{2}\norm{\japbrak{\nabla}^sf_{N_1}}_{L_{t,x}^4}^\frac{1}{2}\norm{\japbrak{\nabla}^sf_{N_3}}_{L_{t,x}^4}^\frac{1}{2}N_2^{-s}\norm{\japbrak{\nabla}^sf_{N_2}}_{L_{t,x}^5}\norm{w_N}_{V_\Delta^2}\,.
\end{split}
\end{equation*}
    Summing over the different dyadic integers yields
     \[
    \sum_{N_1,N_2,N_3,N}N_1^\sigma \absbig{\iint_{J\times\R^3}f_{N_1}\overline{f_{N_2}}f_{N_3}w_Ndxdt}\lesssim \norm{\japbrak{\nabla}^sf}_{L_{t,x}^4}\norm{\japbrak{\nabla}^sf}_{L_{t,x}^5}\norm{f_0}_{H_x^s}\,.
    \]
\end{enumerate}
\paragraph{\textbf{Mixed terms}}  Here we consider the case when at least one term is random, say $w^{(1)}=f$, and at least another one is deterministic, say $w^{(2)}=v$. As for the last term $w^{(3)}\in\{f,v\}$,  we always place it in $L_{t,x}^5(J\times\R^3)$. We prove~\eqref{eq:tri-ffv} and~\eqref{eq:tri-fvv}.
\begin{enumerate}[]
\item \textbf{Case 1: One deterministic term comes with the highest frequency} Without loss of generality, we assume that $N_{(1)}=N_2$. By applying Hölder's inequality, we get
    \[
      \abs{\eqref{eq:multi-int}}=N_2^\sigma\absbig{\iint_{J\times\R^3}f_{N_1}\overline{v}_{N_2}w_{N_3}^{(3)}\overline{w}_{N}dxdt}
      \lesssim N_2^\sigma \norm{f_{N_1}v_{N_2}}_{L_{t,x}^2}\norm{w_{N_3}^{(3)}}_{L_{t,x}^5}\norm{w_N}_{L_{t,x}^{10/3}}\,.
     \]
     The transferred bilinear estimate and Strichartz embedding yield
      \begin{multline*}
      \abs{\eqref{eq:multi-int}}\lesssim N_2^\sigma \norm{f_{N_1}v_{N_2}}_{L_{t,x}^2}^\frac{1}{2}\norm{f_{N_1}}_{L_{t,x}^5}^\frac{1}{2}\norm{v_{N_2}}_{L_{t,x}^{10/3}}^\frac{1}{2} \norm{w_{N_3}^{(3)}}_{L_{t,x}^5}\norm{w_N}_{V_\Delta^2}\\
      \lesssim N_2^{-\frac{1}{4}}N_1^{\frac{1}{2}-s}\parent{N_2^\sigma\norm{v_{N_2}}_{U_\Delta^2}} \norm{f_{N_1}(0)}_{H_x^s}^\frac{1}{2}\norm{\japbrak{\nabla}^sf_{N_1}}_{L_{t,x}^5}^\frac{1}{2}\norm{w_{N_3}^{(3)}}_{L_{t,x}^5}\norm{w_N}_{V_\Delta^2}\,,
    \end{multline*}
 Finally, we sum over the $N_i$'s to obtain that the contribution of this term to~\eqref{eq:multi-int} is less than 
    \begin{multline*}
      \sum_{N_i}N_2^\sigma\absbig{\iint_{J\times\R^3}f_{N_1}\overline{v}_{N_2}w_{N_3}^{(3)}\overline{w}_{N}dxdt}\\
      \lesssim \sum_{N_i}N_2^{-\frac{1}{4}}N_1^{\frac{1}{2}-s}N_2^\sigma\norm{v_{N_2}}_{U_\Delta^2}\norm{P_{N_1}f_0}_{H_x^s}^\frac{1}{2}\norm{f_{N_1}}_{L_{t,x}^5}^\frac{1}{2}\norm{w_{N_3}^{(3)}}_{L_{t,x}^5}\norm{w_N}_{V_\Delta^2} \\
      \lesssim\sum_{N_i} N_2^{\frac{1}{4}-s}\norm{P_{N_1}f_0}_{H_x^s}^\frac{1}{2}\norm{f_{N_1}}_{L_{t,x}^5}^\frac{1}{2}\norm{w_{N_3}^{(3)}}_{L_{t,x}^5}\norm{w_N}_{V_\Delta^2}\\
      \lesssim N^{0-}\norm{f_0}_{H_x^{1/2}}^\frac{1}{2}\norm{\japbrak{\nabla}^sf}_{L_{t,x}^5}^\frac{1}{2}\norm{w^{(3)}}_{L_{t,x}^5}\norm{v}_{X^\sigma}\norm{w_N}_{V_\Delta^2}\,.
    \end{multline*}
Note that we used the assumption that $\frac{1}{4}<s$.
    \item \textbf{Case 2: One random term comes with the highest frequency:} $N_{(1)}=N_1$. We distinguish between two cases.
\begin{enumerate}[]
    \item \textbf{Case 2a: High-high interactions } $N_1^\frac{1}{2}\leq N_2$. We apply Hölder's inequality and the Strichartz embedding~\eqref{eq:strichartz:embedding}, to get  
    \begin{multline*}
        \abs{\eqref{eq:multi-int}}=N_1^\sigma\absbig{\iint_{J\times\R^3}f_{N_1}\overline{v}_{N_2}w_{N_3}^{(3)}\overline{w}_{N}dxdt}\\
        \lesssim N_1^{\sigma-s}N_2^{-\sigma}\norm{\japbrak{\nabla}^sf_{N_1}}_{L_{t,x}^5}\norm{\japbrak{\nabla}^\sigma v_{N_2}}_{L_{t,x}^{10/3}}\norm{w_{N_3}^{(3)}}_{L_{t,x}^5}\norm{w_N}_{L_{t,x}^{10/3}}\\
        \lesssim N_1^{\frac{\sigma}{2}-s}\norm{\japbrak{\nabla}^sf_{N_1}}_{L_{t,x}^5}\norm{\japbrak{\nabla}^\sigma v_{N_2}}_{U_\Delta^2}\norm{w_{N_3}^{(3)}}_{L_{t,x}^5}\norm{w_N}_{V_\Delta^2}\,.
    \end{multline*}
    Since $\sigma<2s$, we can sum over the dyadic frequencies and obtain
    \[\sum_{N_i}N_1^\sigma\absbig{\iint_{J\times\R^3}f_{N_1}\overline{v}_{N_2}w_{N_3}^{(3)}\overline{w}_{N}dxdt}\lesssim N^{0-}\norm{\japbrak{\nabla}^sf}_{L_{t,x}^5}\norm{v}_{L_{t,x}^5}\norm{v}_{X^\sigma}\norm{w_N}_{V_\Delta^2}\,.\]
    \item \textbf{Case 2b: High-low-low interactions} $N_2,N_3\ll N_1^\frac{1}{2}$, and $N_1\sim N$. We make a different analysis depending on the type of $w^{(3)}$.
    \begin{enumerate}[]
        \item \textbf{Case 2b(i)}: $w^{(3)}=f$. The idea is to use Cauchy Schwarz in order to apply the bilinear estimate twice to gain derivatives, and to use Hölder's inequality in order to gain smallness.
    \begin{multline*}
        \abs{\eqref{eq:multi-int}}=N_1^\sigma\absbig{\iint_{J\times\R^3}f_{N_1}\overline{v}_{N_2}f_{N_3}\overline{w}_{N}dxdt}\leq N_1^\sigma\norm{f_{N_1}v_{N_2}}_{L_{t,x}^2}\norm{f_{N_3}w_N}_{L_{t,x}^2}\\
        \lesssim N_1^\sigma \norm{f_{N_1}v_{N_2}}_{L_{t,x}^2}^\frac{1}{2}\norm{f_{N_1}}_{L_{t,x}^5}^\frac{1}{2}\norm{v_{N_2}}_{L_{t,x}^{10/3}}^\frac{1}{2}\norm{f_{N_3}w_N}_{L_{t,x}^2}^\frac{1}{2}\norm{f_{N_3}}_{L_{t,x}^5}^\frac{1}{2}\norm{w_N}_{L_{t,x}^{10/3}}^\frac{1}{2}
    \end{multline*}
Applying bilinear Strichartz estimate yields 
    \begin{multline*}
        \abs{\eqref{eq:multi-int}}\lesssim N_1^{\sigma-s-\frac{1}{4}} N_2^{\frac{1}{2}-\sigma}N_3^{\frac{1}{2}-\frac{s}{2}}N^{-\frac{1}{4}+0}\norm{\japbrak{\nabla}^sf_{N_1}}_{L_{t,x}^5}^\frac{1}{2}\norm{f_{N_1}(0)}_{H_x^s}^\frac{1}{2}\\
        \norm{\japbrak{\nabla}^\sigma v_{N_2}}_{U_\Delta^2}^\frac{1}{2}\norm{f_{N_3}(0)}_{H_x^s}^\frac{1}{2}\norm{\japbrak{\nabla}^\sigma v_{N_2}}_{L_{t,x}^{10/3}}^\frac{1}{2}\norm{\japbrak{\nabla}^sf_{N_3}}_{L_{t,x}^5}^\frac{1}{2}\norm{w_N}_{V_\Delta^2}\\
        \lesssim N_1^{\sigma-s-\frac{1}{2}+0}N_2^{\frac{1}{2}-\sigma}N_3^{\frac{1}{2}-\frac{s}{2}}\norm{\japbrak{\nabla}^\sigma v_{N_2}}_{U_\Delta^2}\norm{w_N}_{V_\Delta^2}\\
        \parent{\norm{f_{N_1}(0)}_{H_x^s}\norm{f_{N_3}(0)}_{H_x^s}}^\frac{1}{2}\parent{\norm{\japbrak{\nabla}^sf_{N_1}}_{L_{t,x}^5}\norm{\japbrak{\nabla}^sf_{N_3}}_{L_{t,x}^5}}^\frac{1}{2}
         \,.
        \end{multline*}
Finally, we observe that
        \[
       N_1^{\sigma-s-\frac{1}{2}+0}N_2^{\frac{1}{2}-\sigma}N_3^{\frac{1}{2}-\frac{s}{2}}\leq N_1^{\sigma-s-\frac{1}{2}+0}N_3^{1-\sigma-\frac{s}{2}}\leq N_1^{\frac{\sigma}{2}-s}  \,, 
       \]
and we can sum over the $N_i$'s to see that the contribution for this term is less than
\[\sum_{N_i}N_1^\sigma\absbig{\iint_{J\times\R^3}f_{N_1}\overline{v}_{N_2}f_{N_3}\overline{w}_{N}dxdt}\lesssim N^{0-} \norm{f_0}_{H_x^s}\norm{\japbrak{\nabla}^sf}_{L_{t,x}^5}\norm{v}_{X^\sigma}\,.\]
\item \textbf{Case 2b(ii)}: $w^{(3)}=v$. We use Hölder's inequality, interpolation, and then apply the bilinear estimate~\eqref{eq:bili:transf}
\begin{equation*}
\begin{split}
\abs{\eqref{eq:multi-int}}&=N_1^\sigma\absbig{\iint_{J\times\R^3}f_{N_1}\overline{v}_{N_2}v_{N_3}\overline{w}_{N}dxdt}\leq N_1^\sigma\norm{f_{N_1}v_{N_3}}_{L_{t,x}^2}\norm{v_{N_2}w_N}_{L_{t,x}^2}\\
&\lesssim N_1^\sigma \parent{\norm{f_{N_1}}_{L_{t,x}^{10/3}}\norm{v_{N_2}}_{L_{t,x}^5}\norm{f_{N_1}v_{N_3}}_{L_{t,x}^2}\norm{v_{N_2}w_N}_{L_{t,x}^2}}^\frac{1}{2}\parent{\norm{v_{N_2}}_{L_{t,x}^5}\norm{w_N}_{L_{t,x}^{10/3}}\norm{v_{N_2}w_N}_{L_{t,x}^2}}^\frac{1}{2} \\
&\begin{split}
\lesssim_\delta N_1^{\sigma-s-\frac{1}{2}+\delta}N_2^\frac{1-\sigma}{2}N_3^\frac{1-\sigma}{2}&\parent{N_1^s\norm{f_{N_1}}_{L_{t,x}^{10/3}}\norm{v_{N_2}}_{L_{t,x}^5}N_1^s\norm{f_{N_1}(0)}_{ L_x^2}N^\sigma\norm{v_{N_2}}_{U_\Delta^2}}^\frac{1}{2}\\
&\parent{\norm{v_{N_3}}_{L_{t,x}^5}\norm{w_N}_{L_{t,x}^{10/3}}N_3^\sigma\norm{v_{N_3}}_{U_\Delta^2}\norm{w_N}_{V_\Delta^2}}^\frac{1}{2}\,.
\end{split}
\end{split}
\end{equation*}
Since, $N_3\leq N_2 \leq N_1^\frac{1}{2}$, we have
\[
N_1^{\sigma-s-\frac{1}{2}+\delta}N_2^\frac{1-\sigma}{2}N_3^\frac{1-\sigma}{2}\leq N_1^{\frac{\sigma}{2}-s+\delta}\,.
\]
Next, we use the assumption that $\sigma<2s$, and we chose $\delta<\frac{\sigma}{2}-s$. Subsequently, we can sum over the $N_i's$ and we conclude that
\[\sum_{N_i}N_1^\sigma\absbig{\iint_{J\times\R^3}f_{N_1}\overline{v}_{N_2}v_{N_3}\overline{w}_{N}dxdt}\lesssim N^{0-} \norm{f_0}_{H_x^s}^\frac{1}{2}\norm{\japbrak{\nabla}^sf}_{L_{t,x}^{10/3}}^\frac{1}{2}\norm{v}_{L_{t,x}^5}\norm{v}_{X^\sigma}\norm{w_N}_{V_\Delta^2}\,.\]
    \end{enumerate}
    \end{enumerate}
\end{enumerate}
This finishes the proofs of estimates~\eqref{eq:tri-ffv} and~\eqref{eq:tri-fvv}, and of Proposition~\ref{prop:trilinear}.
\end{proof}
In order to perform a double bootstrap argument with $\norm{v}_{L_{t,x}^5(J)}$ and $\norm{v}_{X^\sigma(J)}$ to prove local well-posedness, we need to estimate the spacetime critical $L_{t,x}^5$ norm  of $v$.
\begin{proposition}[Additional trilinear estimates]
\label{prop:trilinear:5}
\begin{align}
\label{eq:trili-5-ffv}
\norm{\mathcal{I}\parent{\cdot,\mathcal{N}(f,f,v)}}_{L_{t,x}^5(J)}&\lesssim\norm{f_0}_{H_x^s}^\frac{1}{2} F^\omega(J)^\frac{3}{2}\norm{v}_{X^{1/2}(J)} \\
\label{eq:trili-5-fvv}
\norm{\mathcal{I}\parent{\cdot,\mathcal{N}(f,v,v)}}_{L_{t,x}^5(J)}&\lesssim \norm{f_0}_{H_x^s}^\frac{1}{2} F^\omega(J)^\frac{1}{2}\norm{v}_{L_{t,x}^5(J)}\norm{v}_{X^{1/2}(J)}\,.
\end{align}
\end{proposition}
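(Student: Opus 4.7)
The plan is to reduce the $L_{t,x}^5$ estimates to $X^{1/2}(J)$ estimates by exploiting the Strichartz embedding $X^{1/2}(J)\hookrightarrow L_{t,x}^5(J\times\R^3)$. In $\R^3$ the pair $(5,5)$ is $\dot{H}^{1/2}$-admissible, so $\norm{P_N\e^{it\Delta}u_0}_{L_{t,x}^5}\lesssim N^{1/2}\norm{P_Nu_0}_{L^2}$ and the transfer principle upgrades this to $\norm{P_Nu}_{L_{t,x}^5}\lesssim N^{1/2}\norm{P_Nu}_{U_\Delta^2}$. Combined with the Littlewood--Paley square-function estimate and the triangle inequality in $L_{t,x}^{5/2}$, this yields
\[
\norm{u}_{L_{t,x}^5}^2\lesssim \sum_{N\in 2^{\N}} \norm{P_Nu}_{L_{t,x}^5}^2\lesssim \sum_{N\in 2^{\N}} N\norm{P_Nu}_{U_\Delta^2}^2=\norm{u}_{X^{1/2}}^2\,,
\]
so it suffices to control the $X^{1/2}(J)$ norm of each Duhamel integral by the announced right-hand sides.

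After this reduction I would follow the scheme of Proposition~\ref{prop:trilinear} with the exponent $\sigma=1/2$, which is compatible with the assumptions since $s>1/4$ ensures $1/2<2s$, so that the crucial gain $N_{(1)}^{1/2-2s+}$ of a strictly negative power of the highest frequency remains available. Dualizing through Proposition~\ref{proposition:dual:U2} and performing a Littlewood--Paley decomposition of each factor and of the $V_\Delta^2$ test function $w$, the problem reduces to bounding
\[
N_{(1)}^{1/2}\,\absbig{\iint_{J\times\R^3} w^{(1)}_{N_1}\overline{w^{(2)}_{N_2}}w^{(3)}_{N_3}\overline{w_N}\,dx\,dt}
\]
in the three standard regimes (high-high-high, high-high-low, high-low-low) by means of Hölder, the transferred bilinear Strichartz estimate~\eqref{eq:bili:transf}, and the Strichartz embedding~\eqref{eq:strichartz:embedding}, exactly as in Cases 2a--2b of the earlier proof.

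The main point is the distribution of the random factors so as to extract the prescribed weights $\norm{f_0}_{H^s}^{1/2}F^\omega(J)^{3/2}$ and $\norm{f_0}_{H^s}^{1/2}F^\omega(J)^{1/2}$. Specifically, one random factor $f$ is placed through a transferred bilinear Strichartz paired with a $v$ (or $w$) contribution, yielding after interpolation a square root $\norm{f_0}_{H^s}^{1/2}\norm{\japbrak{\nabla}^sf}_{L_{t,x}^q}^{1/2}\leq \norm{f_0}_{H^s}^{1/2}F^\omega(J)^{1/2}$, while any remaining random factor is placed fully inside one of the Strichartz norms defining $F^\omega(J)$. For the second estimate, one of the two $v$'s is directly put in $L_{t,x}^5$ by Hölder (mirroring Case 2b(ii) of Proposition~\ref{prop:trilinear}), producing the extra factor $\norm{v}_{L_{t,x}^5(J)}$. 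The main bookkeeping obstacle lies in the high-high-low regime, where two applications of~\eqref{eq:bili:transf} are needed to extract a total gain of order $N_{(1)}^{-1/2+}$ from the largest frequency; since $1/2-2s<0$ this leaves enough room for summation over all dyadic parameters via Cauchy--Schwarz and the square-function argument of Observation~\ref{observation:sum}, which yields the announced bounds.
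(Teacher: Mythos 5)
Your argument is correct in its essentials, but it follows a genuinely different route from the paper. You first pass through the critical space: the embedding $X^{1/2}(J)\hookrightarrow L_{t,x}^5(J\times\R^3)$ (frequency-localized Sobolev--Strichartz plus transfer and the square-function argument, which is valid), and then you bound $\norm{\mathcal{I}(\cdot,\mathcal{N})}_{X^{1/2}(J)}$ by dualizing against a $V_\Delta^2$ test function and rerunning the case analysis of Proposition~\ref{prop:trilinear} at $\sigma=1/2$, which is admissible since every decay exponent there only requires $s>1/4$ (e.g.\ $N_{(1)}^{1/4-s}$ in the mixed cases), and the structure of Case~1 and Case~2b(ii) indeed lets you keep exactly one $X^{1/2}$ factor and one $L_{t,x}^5$ factor of $v$ in the $\mathcal{N}(f,v,v)$ estimate, which is the feature needed later in the local theory. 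The paper instead never dualizes: it applies the inhomogeneous $\operatorname{TT}^*$ Strichartz estimate together with Sobolev embedding to reduce directly to $\norm{\abs{\nabla}^{1/2}\mathcal{N}}_{L_{t,x}^{10/7}(J)}$, and then performs the Littlewood--Paley/H\"older/bilinear analysis on that dual Strichartz norm, with no $V_\Delta^2$ test function and a shorter case analysis. Your route proves slightly more (control of the full $X^{1/2}$ norm of the Duhamel term, hence of all $\dot H^{1/2}$-level Strichartz norms), at the cost of the duality step and the $\epsilon$-losses in the $V^2$ bilinear estimate; the paper's reduction is more economical for the stated $L_{t,x}^5$ bound. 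One bookkeeping caveat: in some of your high-high interactions the random factors naturally come out as $F^\omega(J)^2$ or $\norm{\japbrak{\nabla}^sf}_{L_{t,x}^5}\norm{f}_{L_{t,x}^5}$ rather than literally $\norm{f_0}_{H_x^s}^{1/2}F^\omega(J)^{3/2}$; this matches the stated right-hand side only under the standing normalization $F^\omega(J)\leq1$ and with constants depending on $\norm{f_0}_{H_x^s}$, but the paper's own proof has exactly the same feature, so this is not a gap in substance.
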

\begin{proof}
By the $\operatorname{TT}^*$ Strichartz estimate and the Sobolev embedding, we have 
\[ 
\norm{\mathcal{I}\parent{\cdot,\mathcal{N}}}_{L_{t,x}^5(J)}\lesssim\norm{\abs{\nabla}^\frac{1}{2}\mathcal{I}\parent{\cdot,\mathcal{N}}}_{L_t^5L_x^{30/11}}\lesssim\norm{\abs{\nabla}^\frac{1}{2}\mathcal{N}}_{L_{t,x}^{10/7}}\,.
\]
Once again we perform a Littlewood-Paley decomposition of each term and conduct a case by case analysis. We write $N_{(1)},N_{(2)},N_{(3)}$ the non-increasing ordering among them.
\paragraph{\textbf{Proof of}~\eqref{eq:trili-5-ffv}}
We need to estimate 
\[N_{(1)}^\frac{1}{2}\norm{f_{N_1}f_{N_2}v_{N_3}}_{L_{t,x}^{10/7}}\,.\]
Without loss of generality, we may assume that $N_1\geq N_2$.
\begin{enumerate}[]
    \item \textbf{Case 1:} $N_{(1)}=N_1$. We use Hölder's inequality with $\frac{7}{10}=\frac{2}{5}+\frac{3}{10}$ and get
    \[
    N_1^\frac{1}{2}\norm{f_{N_1}f_{N_2}v_{N_3}}_{L_{t,x}^{10/7}}\leq  N_1^{\frac{1}{2}-2s}\norm{\japbrak{\nabla}^sf_{N_1}}_{L_{t,x}^5}\norm{\japbrak{\nabla}^sf_{N_2}}_{L_{t,x}^5}\norm{v_{N_3}}_{L_{t,x}^{10/3}}\,.
    \]
Under the assumption that $1/4<s$ we can use Observation~\ref{observation:sum} and see that in this case, 
\[
\sum_{N_i}N_1^\frac{1}{2}\norm{f_{N_1}f_{N_2}v_{N_3}}_{L_{t,x}^{10/7}}\lesssim \norm{\japbrak{\nabla}^sf}_{L_{t,x}^5}^2\norm{v}_{L_{t,x}^5}\norm{v}_{X^0}\,.
\]
    \item \textbf{Case 2:} $N_{(1)}=N_3$.
    \begin{enumerate}[]
        \item \textbf{Case 2a}: $N_1<N_3^\frac{1}{2}$. In this case we shall apply the bilinear estimate once.
        \begin{multline*}
        N_{3}^\frac{1}{2}\norm{f_{N_1}f_{N_2}v_{N_3}}_{L_{t,x}^{10/7}}\leq N_3^\frac{1}{2}\norm{f_{N_2}v_{N_3}}_{L_{t,x}^2}\norm{f_{N_1}}_{L_{t,x}^5}\\
        \lesssim N_3^{-\frac{1}{4}}N_2^{\frac{1}{2}-s}N_1^{-s}\norm{f_{N_2}(0)}_{H_x^s}^\frac{1}{2}\norm{\japbrak{\nabla}^sf_{N_2}}_{L_{t,x}^5}^\frac{1}{2}\parent{N_3\norm{v_{N_3}}_{U_\Delta^2}\norm{v_{N_3}}_{L_{t,x}^{10/3}}}^\frac{1}{2}\norm{\japbrak{\nabla}^sf_{N_1}}_{L_{t,x}^5}\\
        \lesssim N_3^{-s}\norm{f_{N_2}(0)}_{H_x^s}^\frac{1}{2}\norm{\japbrak{\nabla}^sf_{N_2}}_{L_{t,x}^5}^\frac{1}{2}\parent{N_3^\frac{1}{2}\norm{v_{N_3}}_{U_\Delta^2}}\norm{\japbrak{\nabla}^sf_{N_1}}_{L_{t,x}^5}\,.
        \end{multline*}
        Summing over the $N_i$'s yields
        \[
\sum_{N_i}N_1^\frac{1}{2}\norm{f_{N_1}f_{N_2}v_{N_3}}_{L_{t,x}^{10/7}}\lesssim \norm{f_0}_{H_x^s}^\frac{1}{2}\norm{\japbrak{\nabla}^sf}_{L_{t,x}^5}^\frac{3}{2}\norm{v}_{X^\frac{1}{2}}\,.
\]
        \item \textbf{Case 2b}: $N_3^\frac{1}{2}\leq N_1$. In this case there is no need to apply the bilinear estimate and we only use Hölder. We have
        \begin{multline*}
        N_{3}^\frac{1}{2}\norm{f_{N_1}f_{N_2}v_{N_3}}_{L_{t,x}^{10/7}}\leq N_3^\frac{1}{2}N_1^{-s}\norm{\japbrak{\nabla}^sf_{N_1}}_{L_{t,x}^5}\norm{f_{N_2}}_{L_{t,x}^5}\norm{v_{N_3}}_{L_{t,x}^{10/3}}\\\leq N_3^{-\frac{s}{2}}
        \norm{\japbrak{\nabla}^sf_{N_1}}_{L_{t,x}^5}\norm{f_{N_2}}_{L_{t,x}^5}\parent{N_3^\frac{1}{2}\norm{v_{N_3}}_{U_\Delta^2}}\,.
        \end{multline*}
        Then we sum over the $N_i$'s and get 
        \[
\sum_{N_i}N_1^\frac{1}{2}\norm{f_{N_1}f_{N_2}v_{N_3}}_{L_{t,x}^{10/7}}\lesssim \norm{\japbrak{\nabla}^sf}_{L_{t,x}^5}\norm{f}_{L_{t,x}^5}\norm{v}_{X^\frac{1}{2}}\,.
\]
     \end{enumerate}
\end{enumerate}
This concludes the proof of~\eqref{eq:trili-5-ffv}.
\paragraph{\textbf{Proof of}~\eqref{eq:trili-5-fvv}}
We need to estimate 
\[N_{(1)}^\frac{1}{2}\norm{f_{N_1}v_{N_2}v_{N_3}}_{L_{t,x}^{10/7}}\,.\]
Without loss of generality, we assume that $N_2\geq N_3$.
\begin{enumerate}[]
    \item \textbf{Case 1:} $N_{(1)}=N_1$. We use Hölder with $\frac{7}{10}=\frac{1}{2}+\frac{1}{5}$ to apply the bilinear estimate once. 
    \begin{multline*}
    N_{1}^\frac{1}{2}\norm{f_{N_1}v_{N_2}v_{N_3}}_{L_{t,x}^{10/7}}\leq \norm{f_{N_1}v_{N_3}}_{L_{t,x}^2}\norm{v_{N_2}}_{L_{t,x}^5}\\
   \lesssim N_1^{\frac{1}{4}-s}N_3^\frac{1}{2}\norm{\japbrak{\nabla}^sf_{N_1}}_{L_{t,x}^5}^\frac{1}{2}\norm{f_{N_1}(0)}_{H_x^s}^\frac{1}{2}\norm{v_{N_3}}_{U_\Delta^2}^\frac{1}{2}\norm{v_{N_3}}_{L_{t,x}^{10/3}}^\frac{1}{2}\norm{v_{N_2}}_{L_{t,x}^5}\\
   \lesssim N_1^{\frac{1}{4}-s}\norm{\japbrak{\nabla}^sf_{N_1}}_{L_{t,x}^5}^\frac{1}{2}\norm{f_{N_1}(0)}_{H_x^s}^\frac{1}{2}\norm{v_{N_3}}_{X^\frac{1}{2}}\norm{v_{N_2}}_{L_{t,x}^5}\,.
    \end{multline*}
Under the assumption that $1/4<s$ we can use Observation~\ref{observation:sum} and see that in this case,
\[
\sum_{N_i}N_1^\frac{1}{2}\norm{f_{N_1}v_{N_2}v_{N_3}}_{L_{t,x}^{10/7}}\lesssim \norm{f_0}_{H_x^s}^\frac{1}{2}\norm{\japbrak{\nabla}^sf}_{L_{t,x}^5}^\frac{1}{2}\norm{v}_{L_{t,x}^5}\norm{v}_{X^\frac{1}{2}}\,.
\]
\item \textbf{Case 2:} $N_{(1)}=N_2$.
\begin{enumerate}[]
\item \textbf{Case 2a:} $N_1< N_2^\frac{1}{2}$. We proceed similarly, and get 
\begin{multline*}
N_2^\frac{1}{2}\norm{f_{N_1}v_{N_2}v_{N_3}}_{L_{t,x}^{10/7}}\leq \norm{f_{N_1}v_{N_2}}_{L_{t,x}^2}\norm{v_{N_3}}_{L_{t,x}^5}\\
\lesssim N_2^\frac{1}{4}N_1^{\frac{1}{2}-s}\norm{\japbrak{\nabla}^sf_{N_1}}_{L_{t,x}^5}^\frac{1}{2}\norm{f_{N_1}(0)}_{H_x^s}^\frac{1}{2}\norm{v_{N_2}}_{U_\Delta^2}^\frac{1}{2}\norm{v_{N_2}}_{L_{t,x}^{10/3}}^\frac{1}{2}\norm{v_{N_3}}_{L_{t,x}^5}\\
   \lesssim N_2^{-\frac{s}{2}}\norm{\japbrak{\nabla}^sf_{N_1}}_{L_{t,x}^5}^\frac{1}{2}\norm{f_{N_1}(0)}_{H_x^s}^\frac{1}{2}\norm{v_{N_2}}_{X^\frac{1}{2}}\norm{v_{N_3}}_{L_{t,x}^5}\,.
    \end{multline*}
Hence, 
\[
\sum_{N_i}N_2^\frac{1}{2}\norm{f_{N_1}v_{N_2}v_{N_3}}_{L_{t,x}^{10/7}}\lesssim \norm{f_0}_{H_x^s}^\frac{1}{2}\norm{\japbrak{\nabla}^sf}_{L_{t,x}^5}^\frac{1}{2}\norm{v}_{L_{t,x}^5}\norm{v}_{X^\frac{1}{2}}\,.
\]
    \item \textbf{Case 2b:} $N_2^\frac{1}{2}\leq N_1$. We apply Hölder and get
    \begin{multline*}
        N_2^\frac{1}{2}\norm{f_{N_1}v_{N_2}v_{N_3}}_{L_{t,x}^{10/7}}\leq N_2^\frac{1}{2}\norm{f_{N_1}}_{L_{t,x}^5}\norm{v_{N_2}}_{L_{t,x}^{10/3}}\norm{v_{N_3}}_{L_{t,x}^5}\\
        \leq N_2^{-\frac{s}{2}}\norm{\japbrak{\nabla}^sf_{N_1}}_{L_{t,x}^5}\parent{N_2^\frac{1}{2}\norm{v_{N_2}}_{U_\Delta^2}}\norm{v_{N_3}}_{L_{t,x}^5}\,.
    \end{multline*}
    We deduce from this and from Observation~\ref{observation:sum} that in this case,
    \[
\sum_{N_i}N_2^\frac{1}{2}\norm{f_{N_1}v_{N_2}v_{N_3}}_{L_{t,x}^{10/7}}\lesssim \norm{\japbrak{\nabla}^sf}_{L_{t,x}^5}\norm{v}_{L_{t,x}^5}\norm{v}_{X^\frac{1}{2}}\,.
\]
    \end{enumerate}
\end{enumerate}
\end{proof}
\subsection{Local well-posedness}
\label{section:lwp}
In the following local well-posedness statement, the smallness assumption comes from the $L_{t,x}^5$ norm of the free evolution of the initial data. Since the problem is at a subcritical regularity scale $1/2<\sigma$, we could avoid using such a critical norm. Nevertheless, the $L_{t,x}^5$ norm  provides a blow-up and scattering criterion that can be easily deduced from such a local well-posedness result. Then, we prove in the next section that this blow-up criterion can be exploited as soon as we are able to obtain an a priori uniform bound on the $H^\sigma$ norm of the solution.
\begin{proposition}[Local well-posedness]
\label{prop:lwp}
Let $1/4<s\leq 1/2$ and $\omega\in \Omega_\alpha$. Then, for every $1/2<\sigma<2s$, the Cauchy problem~\eqref{eq:nls:f} with data $v(t_0)\in H^\sigma$ is locally well-posed. More precisely, for every $E>0$, there exists $\epsilon_0(E)$ such that for every $\epsilon\leq \epsilon_0$, every interval $J$ that contains $t_0$ on which  
\begin{equation}
\label{eq:smallness-assumption-lwp}
F^\omega(J)\leq \epsilon,\quad \norm{\e^{it\Delta}v_0}_{L_{t,x}^5(J)}\leq\epsilon\,,
\end{equation}
and for every $v_0\in H^\sigma$ with $\norm{v_0}_{H^\sigma}\leq E$, there exists a unique solution $v$ to~\eqref{eq:nls:f} in $X^\sigma(J)$, with data $v(t_0)=v_0$. Moreover, there exists $C=C(\norm{f_0}_{H^s})>0$ such that
\[\norm{v}_{X^\sigma(J)}\leq 2E,\quad \norm{v}_{L_{t,x}^5(J)}\leq C\epsilon\,,\ \text{and}\ v\in C(J;H^\sigma)\,.\]
\end{proposition}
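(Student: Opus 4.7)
The plan is to run a Picard iteration in a complete metric space that couples the subcritical $X^\sigma(J)$ norm of $v$ with the critical $L_{t,x}^5(J)$ norm, so that both the a priori bound $\norm{v_0}_{H^\sigma}\leq E$ and the smallness~\eqref{eq:smallness-assumption-lwp} are simultaneously exploited. I would fix a large constant $C_0$ to be determined below and take $\epsilon\leq\epsilon_0(E)$ small. Consider the Duhamel map
\[\Phi(v)(t)\coloneqq\e^{i(t-t_0)\Delta}v_0+\mathcal{I}\parent{\intervalcc{t_0}{t},\mathcal{N}(v+f)}\]
and the closed ball
\[B_{E,\epsilon}\coloneqq\setbig{v\in X^\sigma(J)\mid\norm{v}_{X^\sigma(J)}\leq 2E,\ \norm{v}_{L_{t,x}^5(J)}\leq C_0\epsilon},\]
which is complete for the $X^\sigma(J)$ distance by Lemma~\ref{lemma:U2-continuity} and the embeddings~\eqref{eq:embedding:U2}.

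I would then expand $\mathcal{N}(v+f)$ into the eight trilinear terms in $v$ and $f$ and apply Proposition~\ref{prop:trilinear} together with the standard deterministic estimate $\norm{\mathcal{I}(\cdot,\mathcal{N}(v))}_{X^\sigma(J)}\lesssim\norm{v}_{L_{t,x}^5(J)}^2\norm{v}_{X^\sigma(J)}$ (proved by a $U^2$--$V^2$ duality and bilinear Strichartz argument entirely parallel to the one carried out in Proposition~\ref{prop:trilinear}), so as to obtain, under the bootstrap assumption $v\in B_{E,\epsilon}$,
\[\norm{\Phi(v)}_{X^\sigma(J)}\leq E+C\epsilon+2CE\parent{\epsilon+C_0\epsilon^{3/2}+C_0^2\epsilon^2}.\]
For the critical norm, Proposition~\ref{prop:trilinear:5} handles the mixed terms, the $\operatorname{TT}^*$ Strichartz plus Sobolev embedding argument from the proof of Proposition~\ref{prop:trilinear:5} controls $\mathcal{I}(\cdot,\mathcal{N}(v))$ in $L_{t,x}^5(J)$ by $\norm{v}_{L_{t,x}^5(J)}^2\norm{v}_{X^{1/2}(J)}$, and the transferred Strichartz embedding $X^{1/2}(J)\hookrightarrow L_{t,x}^5(J)$ combined with~\eqref{eq:tri-fff} controls $\mathcal{I}(\cdot,\mathcal{N}(f,f,f))$. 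Collecting,
\[\norm{\Phi(v)}_{L_{t,x}^5(J)}\leq\epsilon+C\epsilon+2CE\parent{\epsilon^{3/2}+C_0\epsilon^{3/2}+C_0^2\epsilon^2}.\]
I would then fix $C_0$ large enough to absorb the absolute constant in front of $\epsilon$, and $\epsilon_0(E)$ small enough so that the remainders fall below $E$ and $C_0\epsilon/2$ respectively, which gives $\Phi(B_{E,\epsilon})\subset B_{E,\epsilon}$.

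Applying the same multilinear estimates to $\Phi(v_1)-\Phi(v_2)$, whose integrand factors as a quadratic expression in $\{v_1,v_2,f\}$ multiplying $v_1-v_2$, produces a Lipschitz constant of order $C(\epsilon+C_0\epsilon^{1/2}E+C_0^2\epsilon^2E)<1/2$ for $\epsilon$ small, and Banach's fixed point theorem gives a unique $v\in B_{E,\epsilon}\subset C(J;H_x^\sigma)$, where the continuity and the $H_x^\sigma$ membership follow from~\eqref{eq:embedding:U2} and Lemma~\ref{lemma:U2-continuity}. The main subtlety is that because $\norm{v_0}_{H^\sigma}$ is merely bounded while the length of $J$ is controlled only through critical quantities, the iteration does not close at the level of the subcritical $X^\sigma(J)$ norm alone: the double bookkeeping is essential, because the purely deterministic cubic term $\mathcal{I}(\cdot,\mathcal{N}(v))$ is tamed by the $L_{t,x}^5(J)$ smallness of $v$, while that very smallness is in turn propagated using the $X^\sigma(J)$ bound $2E$ together with the random gains $F^\omega(J)^{1/2}$ coming from Propositions~\ref{prop:trilinear} and~\ref{prop:trilinear:5}.
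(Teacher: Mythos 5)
Your proposal is correct and takes essentially the same approach as the paper: a contraction mapping in the closed ball $B_{E,\epsilon}\subset X^\sigma(J)$ simultaneously enforcing $\norm{v}_{X^\sigma(J)}\leq 2E$ and the smallness of $\norm{v}_{L_{t,x}^5(J)}$, with the self-mapping and Lipschitz estimates coming from Propositions~\ref{prop:trilinear} and~\ref{prop:trilinear:5} together with the fractional Leibniz/Strichartz bound on the purely deterministic cubic term. The only cosmetic difference is that you derive $\norm{\mathcal{I}(\cdot,\mathcal{N}(v))}_{X^\sigma(J)}\lesssim\norm{v}_{L_{t,x}^5}^2\norm{v}_{X^\sigma}$ by a $U^2$--$V^2$ duality argument whereas the paper goes through the fractional Leibniz rule in $L_{t,x}^{10/7}$, but both yield the same estimate.
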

\begin{proof}
We perform a contraction mapping argument in the Banach space, 
\[
\mathcal{B}_{E,\epsilon}=\set{v\in X^\sigma(J)\mid \norm{v}_{X^\sigma(J)}\leq 2E,\quad \norm{v}_{L_{t,x}^5(J)}\leq C\epsilon}\,,
\]
where $C=C(\norm{f_0}_{H^\sigma})>0$ is an irrelevant constant that comes from the multilinear estimates of Proposition~\ref{prop:trilinear:5}. We endow this space with the norm $X^\sigma(J)$, and we define the mapping
\[
\Phi: v\in \mathcal{B}_{E,\epsilon} \mapsto \e^{it\Delta}v_0 -i\int_{t_0}^t\e^{i(t-t')\Delta}\mathcal{N}(v+f)(t')dt'\,.
\]
From the Duhamel integral representation formula, we see that $v$ is solution to~\eqref{eq:nls:f} with $v(t_0)=v_0$ if an only if $\Phi(v)=v$. Let us prove that 
\[
\Phi(\mathcal{B}_{E,\epsilon})\subseteq \mathcal{B}_{E,\epsilon}
\]
when $\epsilon\ll E^{-1}$.
We get from Strichartz and from the trilinear estimate~\eqref{eq:trilinear-error} that for all $v\in\mathcal{B}_{E,\epsilon}$,
\begin{equation*}
\begin{split}
    \norm{\Phi(v)}_{X^\sigma(J)}&\leq \norm{v_0}_{H_x^\sigma}+\norm{\mathcal{I}\parent{\cdot,\mathcal{N}(v)}}_{X^\sigma(J)}+\norm{\mathcal{I}\parent{\cdot,\mathcal{N}(v+f)-\mathcal{N}(v)}}_{X^\sigma(J)}\\
    &\leq E+C\norm{\japbrak{\nabla}^\sigma\parent{\mathcal{N}(v)}}_{L_{t,x}^{10/7}}+C(\norm{f_0}_{H^s})F^\omega(J)\\
    &\quad \quad +C(\norm{f_0}_{H^s})\set{F^\omega(J)+\norm{v}_{L_{t,x}^5(J)}}\norm{v}_{X^\sigma(J)}\,.
\end{split}
\end{equation*}
Applying the Fractional Leibniz rule with $\frac{7}{10}=\frac{2}{5}+\frac{3}{10}$ yields
\[
\norm{\japbrak{\nabla}^\sigma\parent{\mathcal{N}(v)}}_{L_{t,x}^{10/7}}\leq\norm{v}_{L_{t,x}^5(J)}^2\norm{\japbrak{\nabla}^\sigma v}_{L_{t,x}^{10/3}}\leq \norm{v}_{L_{t,x}^5(J)}^2\norm{v}_{X^\sigma(J)}\leq C^2\epsilon^2E\,.
\]
Hence, for all $v\in\mathcal{B}_{E,\epsilon}$ and under the smallness assumption~\eqref{eq:smallness-assumption-lwp} we get 
\begin{equation}
\label{eq:lwp:1}
\norm{\Phi(v)}_{X^\sigma(J)}\leq E + C\epsilon^2E + C(1+C)\epsilon E\,.
\end{equation}
Similarly, we have
\begin{multline}
\label{eq:lwp-5}
\norm{\Phi(v)}_{L_{t,x}^5}\leq\norm{\e^{it\Delta}v_0}_{L_{t,x}^5(J)}+\norm{\mathcal{I}\parent{\cdot,\mathcal{N}(v)}}_{L_{t,x}^5(J)}
+\norm{\mathcal{I}\parent{\cdot,\mathcal{N}(f)}}_{L_{t,x}^5(J)}\\+
\norm{\mathcal{I}\parent{\cdot,\mathcal{N}(f,f,v)+\mathcal{N}(f,v,v)}}_{L_{t,x}^5(J)}\,.
\end{multline}
First, observe from the Sobolev embedding and from~\eqref{eq:tri-fff} that
\[
\norm{\mathcal{I}\parent{\cdot,\mathcal{N}(f)}}_{L_{t,x}^5(J)}\leq C\norm{\mathcal{I}\parent{\cdot,\mathcal{N}(f)}}_{X^{1/2}(J)}\leq C(\norm{f_0}_{H^\sigma})F^\omega(J)\,.
\]
By Hölder's inequality, Sobolev embedding and Strichartz embedding we have
\[
\norm{\mathcal{I}\parent{\cdot,\mathcal{N}(v)}}_{L_{t,x}^5(J)}\leq\norm{v}_{X^{1/2}(J)}\norm{v}_{L_{t,x}^5(J)}^2\,.
\]
Then, using the trilinear estimates presented in Proposition~\ref{prop:trilinear:5} we get that for any $v\in\mathcal{B}_{E,\epsilon}$,
\[
\norm{\mathcal{I}\parent{\cdot,\mathcal{N}(f,f,v)+\mathcal{N}(f,v,v)}}_{L_{t,x}^5(J)}\leq C\parent{F^\omega(J)^\frac{3}{2}+F^\omega(J)^\frac{1}{2}\norm{v}_{L_{t,x}^5(J)}}\norm{v}_{X^\sigma(J)}\,.
\]
Collecting the above estimates, we deduce from~\eqref{eq:lwp-5} that under the smallness condition~\eqref{eq:smallness-assumption-lwp}, we have for $v\in\mathcal{B}_{E,\epsilon}$
\begin{equation}
    \label{eq:lwp:2}
    \norm{\Phi(v)}_{L_{t,x}^5}\leq \epsilon + C\epsilon + 2C^3\epsilon^2E + 2CE\parent{\epsilon^\frac{3}{2}+C\epsilon^\frac{3}{2}}\,.
\end{equation}
Combining~\eqref{eq:lwp:1},~\eqref{eq:lwp:2} and choosing $\eta=3C\epsilon$, $R=2CE$, for $\epsilon\leq\epsilon_0(E)$ with, say $\epsilon_0(E)\sim E^{-1}$ when $E\gg1$, we obtain 
\[
\norm{\Phi(v)}_{X^\sigma(J)}\leq 2CE,\quad \norm{\Phi(v)}_{L_{t,x}^5(J)}\leq 3C\epsilon\,.
\]
for any $v\in\mathcal{B}_{E,\epsilon}$. Thus , $\Phi:\  \mathcal{B}_E\mapsto \mathcal{B}_E$. Similarly, we prove that 
\[
\norm{\Phi(v_1)-\Phi(v_2)}_{X^\sigma(J)}\leq C\parent{\norm{v_1}_{L_{t,x}^5(J)}\norm{v_1}_{X^\sigma(J)}+\norm{v_2}_{L_{t,x}^5(J)}\norm{v_2}_{X^\sigma(J)}}  \norm{v_1-v_2}_{X^\sigma(J)}\,.
\]
Choosing $\epsilon_0(E)\ll E^{-1}$, we see that $\Phi$ is a contraction mapping, and it admits a unique fixed point $v$, solution to~\eqref{eq:nls:f}. Finally, we deduce the continuity of $t\to v(t,\cdot)$ from the embedding $X^\sigma(J)\xhookrightarrow{}C\parent{J;H_x^\sigma(\R^3)}\,.$
\end{proof}
\begin{proposition}[Blow-up criterion]
\label{prop:blow-up}
Let $v$ be the maximal lifespan solution to~\eqref{eq:nls:f} on $J^*\times\R^3$ given by the local well-posedness theory. If we have
\begin{equation}
\label{eq:blow-up:ass}
\norm{v}_{L_{t,x}^5(J^*)}<+\infty\,,
\end{equation}
then $\sup J^* = +\infty$ and the solution scatters as $t$ goes to $+\infty$.
\end{proposition}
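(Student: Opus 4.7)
The plan is to use the hypothesis $\norm{v}_{L_{t,x}^5(J^*)}<+\infty$ together with the finiteness $F^\omega(\R)\leq C_\alpha$ from Lemma~\ref{lemma:rand-infty} to split $J^*$ into finitely many subintervals on which the smallness assumptions of the local theory hold, iterate the trilinear estimates of Propositions~\ref{prop:trilinear} and~\ref{prop:trilinear:5} to conclude $\norm{v}_{X^\sigma(J^*)}<+\infty$, and then deduce global existence by contradicting maximality and scattering via a Cauchy argument in $H^\sigma$.

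First I would fix $\epsilon=\epsilon(\norm{f_0}_{H^s})>0$ small enough and, using absolute continuity of $\norm{v}_{L_{t,x}^5(\cdot)}$ and $F^\omega$, partition $J^*=\bigcup_{k=1}^K[t_{k-1},t_k]$ into finitely many consecutive subintervals $J_k$ with $\norm{v}_{L_{t,x}^5(J_k)}+F^\omega(J_k)\leq \epsilon$. On each $J_k$ I would write the Duhamel representation
\[v(t)=\e^{i(t-t_{k-1})\Delta}v(t_{k-1})-i\mathcal{I}([t_{k-1},t],\mathcal{N}(v))-i\mathcal{I}([t_{k-1},t],\mathcal{N}(v+f)-\mathcal{N}(v)),\]
and bound each piece: the linear term by $\norm{v(t_{k-1})}_{H^\sigma}$; the purely deterministic Duhamel term by $C\norm{v}_{L_{t,x}^5(J_k)}^2\norm{v}_{X^\sigma(J_k)}$ via fractional Leibniz and the Strichartz embedding from Proposition~\ref{prop:transfer}; and the remaining mixed terms via~\eqref{eq:trilinear-error}. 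Absorbing the small factors yields
\[\norm{v}_{X^\sigma(J_k)}\leq 2\norm{v(t_{k-1})}_{H^\sigma}+C(\norm{f_0}_{H^s})\epsilon,\]
and iterating together with the sub-additivity of $X^\sigma$ (Lemma~\ref{lemma:U2-continuity}) gives $\norm{v}_{X^\sigma(J^*)}<+\infty$ and $\sup_{t\in J^*}\norm{v(t)}_{H^\sigma}<+\infty$.

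If $\sup J^*=T^*<+\infty$ held, the uniform $H^\sigma$ bound combined with the trilinear control on shrinking tails would let me define $v(T^*)\in H^\sigma$ as the $H^\sigma$-limit of $v(t)$ as $t\to T^{*-}$; since both $F^\omega([T^*,T^*+\delta))$ and $\norm{\e^{it\Delta}v(T^*)}_{L_{t,x}^5([T^*,T^*+\delta))}$ vanish as $\delta\to 0^+$, Proposition~\ref{prop:lwp} applied at $T^*$ would extend the solution past $T^*$, contradicting maximality; hence $\sup J^*=+\infty$. For scattering, I would show that $\e^{-it\Delta}v(t)$ is Cauchy in $H_x^\sigma$ as $t\to+\infty$: for $s<t$ sufficiently large,
\[\norm{\e^{-it\Delta}v(t)-\e^{-is\Delta}v(s)}_{H_x^\sigma}\lesssim \norm{\mathcal{I}([s,t],\mathcal{N}(v+f))}_{X^\sigma([s,t])},\]
and the right-hand side tends to zero by the same trilinear estimates, since $\norm{v}_{L_{t,x}^5([s,+\infty))}\to 0$, $F^\omega([s,+\infty))\to 0$, and $\norm{v}_{X^\sigma(\R)}$ remains bounded; the resulting limit $v_+\in H_x^\sigma$ is the scattering state. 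The main obstacle is the iteration in step two: the smallness parameter $\epsilon$ must be chosen independently of $K$, which is possible because the constants in Propositions~\ref{prop:trilinear} and~\ref{prop:trilinear:5} depend only on $\norm{f_0}_{H^s}$ and every small factor multiplies a $v$-norm that can be absorbed on the left, so that the multiplicative $2^K$ loss in the $H^\sigma$ bound is harmless once $K$ is fixed.
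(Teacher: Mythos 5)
Your proposal is correct and follows essentially the same route as the paper: a finite partition of $J^*$ into intervals where $\norm{v}_{L_{t,x}^5}$ and $F^\omega$ are small, an iteration of the trilinear estimates of Propositions~\ref{prop:trilinear} and~\ref{prop:trilinear:5} with sub-additivity of the $X^\sigma$ norm to get a uniform $H^\sigma$ bound, continuation past $T^*$ via Proposition~\ref{prop:lwp}, and scattering by a Cauchy argument in $H_x^\sigma$. The only cosmetic difference is in the continuation step: you restart from the limit datum $v(T^*)$, whereas the paper restarts from a time $t_0$ slightly before $T^*$ and uses monotone convergence to make $\norm{\e^{i(t-t_0)\Delta}v(t_0)}_{L_{t,x}^5}$ small beyond $T^*$; both variants are standard and equivalent here.
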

\begin{proof}
Denote $E=\norm{v(0)}_{H_x^\sigma}$, and $\epsilon>0$ to be chosen later on. We proceed by contraction and assume that $\norm{v}_{L_{t,x}^5}<+\infty$, but $T^*=\sup J^*<+\infty$.  We will raise the contradiction by extending the solution up to $T^*$. By the local Cauchy theory from Proposition~\ref{prop:lwp}, it is enough to prove that there exist a constant $C(E,f_0)$, $t_0\in J^*$ and $\delta>0$ such that 
\begin{align}
\label{eq:blow-up1}
\norm{v(t)}_{L_t^\infty H_x^\sigma(J^*)}\leq C(E,f_0)\,,\\
\label{eq:blow-up2}\norm{\e^{i(t-t_0)\Delta}v(t_0)}_{L_{t,x}^5\intervaloo{t_0-\delta}{T^*+\delta}}\leq \epsilon_0\,,
\end{align}
where $\epsilon_0=\epsilon_0(E)$ is as in the local well-posedness Proposition~\ref{prop:lwp}. To prove the above estimates, we proceed as follows. First, we use the global assumption~\eqref{eq:blow-up:ass} to decompose $J^*$ into a finite number $L(\epsilon)$ of intervals $\{I_l\}_{1\leq l\leq L}$ such that for $j\in\{1,\dots,L\}$,
\[
\norm{v}_{L_{t,x}^5(I_j)}\leq \epsilon\,.
\]
Using the assumption~\eqref{eq:ass:data} that $\norm{f}_{L_{t,x}^5(\R)}<+\infty$, and up to an extra decomposition of $J^*$ into $\bigo{L}$ intervals, we may assume that for each $l\in\set{1,\dots,L}$, 
\[
F(J_l)\leq \epsilon\,.
\]
Next, we take $\widetilde{J}\Subset J^*$ a compact sub-interval of $J^*$, and denoting $\widetilde{J}\cap J_l$ by $\widetilde{J}_l$, we prove by induction on $l\leq L$ that there exists $C(\norm{f_0}_{H^s})$ such that 
\begin{equation}
    \norm{v}_{X^\sigma(\widetilde{J}_l)}\leq C2^lE\,.
\end{equation}
For this purpose, we apply the trilinear estimates~\eqref{eq:trilinear-error} to get
\[
\norm{v}_{X^\sigma\widetilde{J}_1)}\leq \norm{v(0)}_{H^\sigma} + C\epsilon +2C\epsilon\norm{v}_{X^\sigma(\widetilde{J}_1)}\,.
\]
Hence, choosing $\epsilon$ small enough we see that 
\[
\norm{v}_{X^\sigma(\widetilde{J}_1)}\leq 2E\,.
\]
Iterating this on $\widetilde{J}_2,\dots, \widetilde{J}_L$ with the same $\epsilon$, we obtain that 
for $l\in\set{1,\dots,L}$,  
\[
\norm{v}_{X^\sigma(\widetilde{J}_l)}\leq 2^lE\,.
\]
Since $\epsilon$ does not depend on $\widetilde{J}\Subset J^*$, we have from sub-additivity of the norm $X^\sigma$ (see Lemma~\ref{lemma:U2-continuity}) that  
\begin{equation}
    \label{eq:uniform-Xsigma}
\norm{v}_{L_t^\infty H^\sigma(J^*)} \leq \underset{\widetilde{J}\Subset J^*}{\sup}\ \norm{v}_{X^\sigma(\widetilde{J})}\leq \parent{\sum_{l=1}^L 2^l} E \leq C(L,E)\,.
\end{equation}
Similarly, the multilinear estimates from Proposition~\ref{prop:trilinear:5} yield for all $t_0\in J^*$,
\[
\norm{\mathcal{I}\parent{\intervaloo{t_0}{\cdot},\mathcal{N}(f+v)}}_{L_{t,x}^5 (\widetilde{J})}\leq \widetilde{C}(E,f_0)\,.
\]
Therefore, taking the sup over all the possible $\widetilde{J}$ we obtain a uniform bound with respect to $t_0$ on the $L_{t,x}^5(J^*)$-norm of the Duhamel integral. Consequently, we have from Duhamel's formula that 
\begin{multline*}
\norm{\e^{i(t-t_0)\Delta}v(t_0)}_{L_{t,x}^5\intervalco{t_0}{T^*}} = \norm{v-\mathcal{I}\parent{\intervaloo{t_0}{\cdot},\mathcal{N}(f+v)}}_{L_{t,x}^5\intervalco{t_0}{T^*}}\\
\leq \norm{v}_{L_{t,x}^5(J^*)} +\norm{\mathcal{I}\parent{\intervaloo{t_0}{\cdot},\mathcal{N}(f+v)}}_{L_{t,x}^5(t_0,T*)}\,,
\end{multline*}
and we conclude the proof of~\eqref{eq:blow-up2} by monotone convergence
\[
\underset{t_0\to T^*}{\lim}\norm{\e^{i(t-t_0)\Delta}v(t_0)}_{L_{t,x}^5\intervalco{t_0}{T^*}}=0\,.
\]
As explained above, this finishes the proof of the global existence. At that point, it is standard to deduce scattering from these global bounds. Indeed, when $J^*=\intervalco{0}{+\infty}$ and $\norm{v}_{L_{t,x}^5(J^*)}<+\infty$, we use the uniform bound~\eqref{eq:uniform-Xsigma}, the nonlinear estimates from Proposition~\ref{prop:trilinear} and monotone convergence to deduce from the Duhamel integral formulation that $\set{\e^{-it_n\Delta}v}_n$ is a Cauchy sequence in $H_x^\sigma$ as $t_n$ goes to $+\infty$. This ends the proof of Proposition~\ref{prop:blow-up}.
\end{proof}
\subsection{Stability theory}
\label{sec:stability}
In this subsection, we prove that global existence and scattering for $v$ can be deduced from a priori uniform estimate on the $H^\sigma$ norm of $v(t)$. Note that this is true when $v$ is solution to~\eqref{eq:nls} without forcing terms, and when $\sigma>\frac{2}{3}$ (see Theorem~\ref{theorem:deterministic-gwp}). Hence, since $v$ is actually solution to the forced equation~\eqref{eq:nls}, we will settle a stability theory at regularity $H^\sigma$. In particular, on some spacetime slabs where the perturbation satisfies a smallness condition, we shall be able to approach $v$ by a solution to~\eqref{eq:nls} and to infer local spacetime bound on $v$. Then, provided we have a global a priori estimate for the $H^\sigma$ norm of $v$, we perform a bootstrap argument to extend the spacetime bound to the whole maximal lifespan of $v$, and to deduce conditional scattering. 
For now on, we fix $\sigma>2/3$, which corresponds to the lowest regularity where the global Cauchy theory for~\eqref{eq:nls} is known.
\begin{lemma}[Short-time stability in $H_x^\sigma$]
\label{lemma:short-stability}
Let $t_0\in\R$, $E\geq 0$ and $v_0\in H_x^\sigma$ with $\norm{v_0}_{H_x^\sigma}\leq E$. Let $v$ be the local solution to~\eqref{eq:nls:f} as in Proposition~\ref{prop:lwp} associated with $v(t_0)=v_0$. Then, take $u_0\in H_x^\sigma$ and denote by $u$ the global solution to~\eqref{eq:nls} in $C^\infty(\R;H_x^\sigma)$, with data $u(t_0)=u_0$. There exists $\epsilon_1=\epsilon_1(E)$ and $C_0(E)$ such that if 
\begin{equation}
\label{eq:ass-small-short-time-stability}
\norm{v_0-u_0}_{H_x^\sigma}\leq \epsilon\,,\quad F^\omega(J)\leq \epsilon\,,\quad \norm{u}_{L_{t,x}^5(J)}\leq \epsilon\,,
\end{equation}
for some $\epsilon\leq\epsilon_1$, then $v$ stays close to $u$ on any compact interval $J\Subset J^*$ that contains $t_0$
\begin{equation}
    \label{eq:short-stability}
    \norm{v-u}_{X^\sigma(J)}\leq C_0(E)\epsilon\,.
\end{equation}
\end{lemma}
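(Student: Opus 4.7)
The plan is to set $w=v-u$ and derive a closed a priori system in the norms $\norm{w}_{X^\sigma(J)}$ and $\norm{w}_{L_{t,x}^5(J)}$, exploiting smallness coming from the three assumptions in~\eqref{eq:ass-small-short-time-stability}. Subtracting the two equations, $w$ satisfies
\[
i\partial_t w+\Delta w = \mathcal{N}(u+w+f)-\mathcal{N}(u),\quad w(t_0)=v_0-u_0,
\]
and since $\mathcal{N}$ is trilinear, the right-hand side expands into trilinear expressions $\mathcal{N}(a^{(1)},a^{(2)},a^{(3)})$ with each $a^{(j)}\in\{u,w,f\}$ and at least one of them belonging to $\{w,f\}$. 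A preliminary step is to note that, since $\norm{u_0}_{H^\sigma}\leq E+\epsilon$ and $\norm{u}_{L_{t,x}^5(J)}\leq\epsilon$, the deterministic subcritical theory of Theorem~\ref{theorem:deterministic-gwp} (or a direct contraction using $\norm{\mathcal{I}(\cdot,\mathcal{N}(u))}_{X^\sigma(J)}\lesssim \norm{u}_{L_{t,x}^5(J)}^2\norm{u}_{X^\sigma(J)}$) provides the a priori bound $\norm{u}_{X^\sigma(J)}\leq C(E)$.

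Next I would bound each family of trilinear terms via Propositions~\ref{prop:trilinear}--\ref{prop:trilinear:5} (applied with $v$ replaced by $u$, $w$, or $u+w$, as appropriate) and the purely deterministic trilinear estimate. Terms with no $w$ and at least one $f$ are absorbed by $C(E)F^\omega(J)\leq C(E)\epsilon$. Terms with at least one $w$ and no $f$, i.e.\ $\mathcal{N}(u,u,w)$ and permutations, are controlled by $\norm{u}_{L_{t,x}^5(J)}^2\norm{w}_{X^\sigma(J)}\leq \epsilon^2\norm{w}_{X^\sigma(J)}$. Mixed terms involving $w$ and $f$ are handled by~\eqref{eq:tri-ffv}--\eqref{eq:tri-fvv} and~\eqref{eq:trili-5-ffv}--\eqref{eq:trili-5-fvv}, producing prefactors of the form $F^\omega(J)+\norm{w}_{L_{t,x}^5(J)}F^\omega(J)^{1/2}+\norm{u}_{L_{t,x}^5(J)}$, each $\lesssim\epsilon+\norm{w}_{L_{t,x}^5(J)}$. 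Collecting these contributions on a compact sub-interval $\widetilde{J}\Subset J$ containing $t_0$, and writing $\Phi(\widetilde J)\coloneqq\norm{w}_{X^\sigma(\widetilde J)}+\norm{w}_{L_{t,x}^5(\widetilde J)}$, one obtains
\[
\Phi(\widetilde J)\leq C_1(E)\epsilon+C_2(E)\bigl(\epsilon+\Phi(\widetilde J)\bigr)\Phi(\widetilde J).
\]
A standard continuity argument, based on the time-continuity and sub-additivity of $X^\sigma$ recalled in Lemma~\ref{lemma:U2-continuity} and on $\Phi(\{t_0\})\leq\epsilon$, then forces $\Phi(\widetilde J)\leq 2C_1(E)\epsilon$ as soon as $\epsilon\leq\epsilon_1(E)$ is chosen small enough to make $C_2(E)(\epsilon+2C_1(E)\epsilon)\leq 1/2$. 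Passing to the supremum over $\widetilde J\Subset J$ via sub-additivity yields~\eqref{eq:short-stability}.

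The main obstacle I anticipate is the bookkeeping in the mixed trilinear terms where two slots are rough, i.e.\ of type $\mathcal{N}(f,u,w)$, $\mathcal{N}(f,w,w)$ and their permutations: one must verify that in each such configuration the third slot can be placed in $L_{t,x}^5$ to pick up either $\norm{u}_{L_{t,x}^5(J)}$ or $\norm{w}_{L_{t,x}^5(J)}$, so that the multilinear estimates of Propositions~\ref{prop:trilinear}--\ref{prop:trilinear:5} indeed yield the prefactor $\epsilon+\norm{w}_{L_{t,x}^5(J)}$ without wasting any regularity, since the constraint $\sigma<2s$ leaves essentially no margin. Once this case-check is carried out, the contraction/bootstrap closes immediately.
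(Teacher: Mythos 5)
Your argument is correct and leads to the same conclusion, but it takes a somewhat different (and slightly heavier) route than the paper. You expand $\mathcal{N}(u+w+f)-\mathcal{N}(u)$ fully into trilinear monomials with slots in $\set{u,w,f}$ and then handle each family via Propositions~\ref{prop:trilinear} and~\ref{prop:trilinear:5}, tracking the pair $\parent{\norm{w}_{X^\sigma},\norm{w}_{L_{t,x}^5}}$ simultaneously. The paper instead groups the right-hand side as $\parent{\mathcal{N}(v+f)-\mathcal{N}(v)}+\parent{\mathcal{N}(v)-\mathcal{N}(u)}$: the first bracket is exactly the forcing term already bounded in~\eqref{eq:trilinear-error}, applied verbatim to $v$ (no re-derivation with different slot occupants is required), while the second is a bare deterministic trilinear difference controlled by $\norm{w}_{X^\sigma}\parent{\norm{u}_{L_{t,x}^5}^2+\norm{w}_{L_{t,x}^5}^2}$. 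The paper also dispenses with your second bootstrap quantity by invoking the embedding $X^\sigma(J)\hookrightarrow X^{1/2}(J)\hookrightarrow L_{t,x}^5(J)$ (valid since $\sigma>1/2$ and $(5,30/11)$ is Strichartz-admissible), so the continuity argument runs on the single function $g(t)=\norm{w}_{X^\sigma\intervaloo{t_0}{t}}$ and Proposition~\ref{prop:trilinear:5} is not needed here at all. The ``bookkeeping obstacle'' you flag for mixed terms like $\mathcal{N}(f,u,w)$ is real in your formulation but is precisely what the paper's grouping sidesteps: by never splitting $v=u+w$ inside the stochastic bracket, one applies~\eqref{eq:trilinear-error} as a black box, and the regularity budget $\sigma<2s$ is never re-examined. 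In short: same mechanism (a priori bound plus continuity argument), but the paper's decomposition is more economical and avoids re-proving the multilinear estimates in the mixed $\{u,w,f\}$ configurations.
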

\begin{proof}
Let us consider the difference $w=v-u$~\footnote{\ Note that $u$, hence $w$, is in $X^\sigma(J)$. This follows from the uniqueness part of Theorem~\ref{theorem:deterministic-gwp} combined with a contraction mapping argument in $X^{\sigma}(J)$. Moreover, $\norm{u}_{X^\sigma(J)}\leq CE$.},  solution to the equation
\begin{equation}
    \label{eq:eq-stability-w}
    \begin{cases}
    i\partial_tw+\Delta w = \mathcal{N}(v+f)-\mathcal{N}(v)+\mathcal{N}(v)-\mathcal{N}(u)\,,\quad  (t,x)\in\R\times\R^3\,.\\
    w_{t=0} = v_0-u_0\,,
    \end{cases}
\end{equation}
We introduce the function $g: t\in J\mapsto \norm{w}_{X^\sigma(t_0,t)}$, and we perform a continuity argument. Note that as a consequence of the smallness condition $\norm{w(0)}_{H_x^\sigma}\leq\epsilon$ and from Lemma~\ref{lemma:U2-continuity}, we have that $g$ is continuous and $\underset{t\to t_0}{\limsup}\ g(t)\lesssim\epsilon$. Moreover, it follows from Duhamel's formulation that
\[
\norm{w}_{X^\sigma(J)}\leq C\norm{v_0-u_0}_{H_x^\sigma}+\norm{\mathcal{I}\parent{\cdot,\mathcal{N}(v+f)-\mathcal{N}(v)}}_{X^\sigma(J)}+\norm{\mathcal{I}\parent{\cdot,\mathcal{N}(u)-\mathcal{N}(v)}}_{X^\sigma(J)}\,.
\]
From the trilinear estimate~\eqref{eq:trilinear-error} of Proposition~\ref{prop:trilinear} and the smallness assumptions, we get
\begin{equation*}
\begin{split}
\norm{\mathcal{I}\parent{\cdot,\mathcal{N}(v+f)-\mathcal{N}(v)}}_{X^\sigma\intervaloo{t_0}{t}}&\leq
CF^\omega(J)+C\norm{v}_{X^\sigma\intervaloo{t_0}{t}}\parent{F^\omega(J)+\norm{v}_{L_{t,x}^5(\intervaloo{t_0}t{})}F^\omega(J)^\frac{1}{2}}\\
&\hspace{-38pt}\leq C\epsilon + C(\norm{u}_{X^\sigma\intervaloo{t_0}{t}}+\norm{w}_{X^\sigma(\intervaloo{t_0}{t})})\parent{\epsilon+\epsilon^\frac{1}{2}\norm{w}_{L_{t,x}^5\intervaloo{t_0}{t}}+\epsilon^\frac{1}{2}\norm{u}_{L_{t,x}^5(J)}}\\
&\hspace{-38pt}\leq C\epsilon + C\epsilon^\frac{1}{2}\parent{CE+g(t)}\parent{2\epsilon+Cg(t)}\,.
\end{split}
\end{equation*}
Moreover, 
\begin{multline*}\norm{\mathcal{I}\parent{\cdot,\mathcal{N}(u)-\mathcal{N}(v)}}_{X^\sigma\intervaloo{t_0}{t}}\leq C\norm{w}_{X^\sigma(\intervaloo{t_0}{t})}\parent{\norm{v}_{L_{t,x}^5(\intervaloo{t_0}{t})}^2+\norm{u}_{L_{t,x}^5(J)}^2}\\
\leq C\norm{w}_{X^\sigma\intervaloo{t_0}{t}}\parent{\norm{w}_{L_{t,x}^5\intervaloo{t_0}{t}}^2+\norm{u}_{L_{t,x}^5(J)}^2} 
\leq Cg(t)\parent{g(t)^2+\epsilon^2})\,.
\end{multline*}
Hence, by combining the above estimates, we prove that for all $t\in J$ with $t_0\leq t$,  
\[g(t)\leq 2C\epsilon+2C\epsilon^\frac{3}{2}E+g(t)\parent{\epsilon^\frac{1}{2}CE+\epsilon^\frac{3}{2}+g(t)^2+\epsilon^2}\,,\]
and the result follows from a continuity argument, by choosing $\epsilon_1\ll E^{-\frac{1}{2}}$.
\end{proof}
Next, we combine the local well-posedness result with the blow-up criterion to turn the short-time stability result into a long time stability statement. 
\begin{lemma}[Long-time stability]
\label{lemma:long:stability}
Let $\Lambda>0$, $t_0\in\R$, $J=\intervalco{t_0}{T}$, $v_0\in H_x^\sigma$ with $\norm{v_0}_{H_x^\sigma}\leq E$. Let $u$ be the global-in-time solution to~\eqref{eq:nls} in $H_x^\sigma$ with data $u(t_0)=v_0$, and assume that 
\[
\norm{u}_{L_{t,x}^5(J)}\leq \Lambda\,.
\]
There exists $\epsilon_2=\epsilon_2(E,\Lambda)>0$ such that under the smallness condition
\[
F^\omega(J)\leq \epsilon_2\,,
\]
there exists a unique solution $v$ to~\eqref{eq:nls:f} and a constant $C(E,\Lambda)>0$ such that for every compact interval  $\widetilde{J}\Subset J$, we have $v\in X^\sigma(\widetilde{J})$, and
\begin{equation}
\label{eq:long-time-stability}
\norm{v}_{L_{t,x}^5(J)}\leq C(E,\Lambda)\,.
\end{equation}
\end{lemma}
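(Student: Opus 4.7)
The plan is to iterate the short-time stability Lemma~\ref{lemma:short-stability} with $u$ as the reference solution on a partition of $J$ adapted to the $L_{t,x}^5$ concentration of $u$. By Theorem~\ref{theorem:deterministic-gwp}, the deterministic solution $u$ satisfies the a priori bound $\norm{u}_{L_t^\infty H_x^\sigma(J)} \leq \widetilde E$ for some $\widetilde E = \widetilde E(E)$; let $\epsilon_* := \epsilon_1(2\widetilde E)$ and $C_0 := C_0(2\widetilde E)$ denote the constants from Lemma~\ref{lemma:short-stability} applied at the level $2\widetilde E$. I fix a partition scale $\epsilon_\mathrm{p} = \epsilon_\mathrm{p}(E) \leq \epsilon_*$ small enough that the nonlinear absorption in the proof of that lemma closes; crucially, $\epsilon_\mathrm{p}$ depends only on $E$, not on $\Lambda$. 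Decomposing $J = \bigcup_{l=1}^L I_l$ into consecutive subintervals with $\norm{u}_{L_{t,x}^5(I_l)} \leq \epsilon_\mathrm{p}$ and taking $\epsilon_2 \leq \epsilon_\mathrm{p}$, pigeonholing gives $L \leq L(E,\Lambda) := 1 + (\Lambda/\epsilon_\mathrm{p})^5$.

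I would then run an induction on $l$, tracking $\delta_l := \norm{v(t_l) - u(t_l)}_{H_x^\sigma}$ starting from $\delta_0 = 0$. Inspecting the proof of Lemma~\ref{lemma:short-stability} shows that, under the inductive assumption $\delta_{l-1} \leq \widetilde E$ (so that $\norm{v(t_{l-1})}_{H_x^\sigma} \leq 2\widetilde E$ and the lemma applies at that level), the stability output takes the additive form
\[
\norm{v-u}_{X^\sigma(I_l)} \leq C_0 \bigl(\delta_{l-1} + F^\omega(I_l)\bigr),
\]
since $\norm{u}_{L_{t,x}^5(I_l)} \leq \epsilon_\mathrm{p}$ enters only to absorb the nonlinear term $\norm{w}_{X^\sigma}(\norm{v}_{L_{t,x}^5}^2 + \norm{u}_{L_{t,x}^5}^2)$ and not as an independent source. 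Via the embedding $X^\sigma \hookrightarrow L_t^\infty H_x^\sigma$, this yields $\delta_l \leq C_0(\delta_{l-1} + F^\omega(I_l))$. Iterating, and applying Hölder's inequality with the worst $L^q_t$ exponent $q=4$ appearing in the definition of $F^\omega$,
\[
\delta_L \leq C_0^L \sum_{l=1}^L F^\omega(I_l) \leq C_0^L\, L^{3/4}\, F^\omega(J) \leq C_0^L\, L^{3/4}\, \epsilon_2.
\]

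Choosing $\epsilon_2 = \epsilon_2(E,\Lambda) > 0$ small enough that the right-hand side is $\leq \widetilde E$ closes the bootstrap; existence of $v$ on each $I_l$ then follows from the local theory of Proposition~\ref{prop:lwp} combined with the $X^\sigma$-bound just obtained. Summing over intervals and using $X^\sigma \hookrightarrow L_{t,x}^5$,
\[
\norm{v}_{L_{t,x}^5(J)}^5 = \sum_{l=1}^L \norm{v}_{L_{t,x}^5(I_l)}^5 \lesssim \sum_{l=1}^L \parent{\norm{u}_{L_{t,x}^5(I_l)} + \norm{v-u}_{X^\sigma(I_l)}}^5 \lesssim L(\epsilon_\mathrm{p} + \widetilde E)^5 = C(E,\Lambda),
\]
which is the desired bound. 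The main technical obstacle is the exponential growth of the stability constants in $L$, which forces $\epsilon_2$ to be taken exponentially small in $\Lambda^5$; handling this cleanly requires the refined additive form of the stability estimate above, since a naive use of the uniform smallness parameter in Lemma~\ref{lemma:short-stability} would lead to an unresolvable implicit condition of the type $\epsilon \leq \epsilon_* C_0^{-(\Lambda/\epsilon)^5}$.
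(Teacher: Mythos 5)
Your overall strategy is the one the paper uses: partition $J$ into $L=L(E,\Lambda)$ intervals on which the reference solution $u$ is small in $L_{t,x}^5$, iterate the short-time stability Lemma~\ref{lemma:short-stability} with the same global $u$ as reference, accept a growth of the constants in $L$, and choose $\epsilon_2=\epsilon_2(E,\Lambda)$ small enough to beat it; the paper does exactly this bookkeeping by recursively defining constants $C_i$ via Duhamel and imposing $(\sup_i C_i)\,\epsilon_2\leq\epsilon_1(2C_0(E))$, and then concludes via sub-additivity of $X^\sigma$, the embedding into $L_{t,x}^5$, and the blow-up criterion of Proposition~\ref{prop:blow-up}.

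There is, however, a genuine flaw in how you close the induction. The additive estimate $\norm{v-u}_{X^\sigma(I_l)}\leq C_0\bigl(\delta_{l-1}+F^\omega(I_l)\bigr)$ is not available under the mere hypothesis $\delta_{l-1}\leq\widetilde E$: in the proof of Lemma~\ref{lemma:short-stability} the continuity argument must absorb terms of the form $g(t)\bigl(\epsilon^{1/2}CE+g(t)^2+\dots\bigr)$, which requires the initial difference (and hence $g$) to start below a small threshold of order $\epsilon_1(2\widetilde E)\ll \widetilde E$; if $\delta_{l-1}$ is allowed to be of size $\widetilde E$ the cubic term $g^3$ cannot be absorbed and the claimed inequality fails. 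Consequently, closing the bootstrap with ``right-hand side $\leq\widetilde E$'' is insufficient: you must keep the accumulated difference below the short-time threshold, i.e.\ require $C_0^L L\,\epsilon_2\leq\epsilon_1(2\widetilde E)$, which is precisely the paper's condition $(\sup_i C_i)\epsilon_2\leq\epsilon_1$. Since your $\epsilon_2$ is taken exponentially small anyway, the repair is immediate, but as written the induction hypothesis does not justify the key step. Two smaller points: the forcing also enters the short-time proof through $F^\omega(I_l)^{1/2}$ (multiplied by $E$ and $\norm{u}_{L_{t,x}^5}$), so the clean additive form needs a correspondingly adjusted right-hand side (harmless here); and the H\"older factor in $\sum_l F^\omega(I_l)$ is dictated by the largest time exponent in $F^\omega$ (the $L_{t,x}^{10}$ component gives $L^{9/10}$, not $L^{3/4}$) --- or simply use the crude bound $L\,F^\omega(J)$, which suffices. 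Finally, the existence of $v$ on each $I_l$ should be handled as in the paper, by running the argument on compact subintervals of the maximal lifespan and invoking Proposition~\ref{prop:blow-up}.
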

\begin{proof}
Let $v$ be the maximal lifespan solution to~\eqref{eq:nls:f} with $v(t_0)=v_0$, on an interval $J^*\times\R^3$. The whole statement reduces to the estimate~\eqref{eq:long-time-stability}. Indeed, we would be in position to apply the blow-up criterion, and to deduce that $J\subset J^*$. The strategy to prove such a global estimate is to break $J$ into a finite number $L(E,\Lambda)$ of intervals where the $L_{t,x}^5$ norm of $u$ is small, so that we can apply the short-time stability Lemma~\ref{lemma:short-stability}. Then, we sum over the different spacetime slabs. Since their number does only depend on $E$ and $\Lambda$, we obtain~\eqref{eq:long-time-stability}.

We proceed as follows. First, we use the deterministic global well-posedness Theorem~\ref{theorem:deterministic-gwp} and we get a constant $C_0(E)$, such that for any $\widetilde{J}\Subset J^*$, we have
\[
\norm{u}_{X^\sigma(\widetilde{J})}\leq C_0(E)\,.
\]
In addition, we take $\epsilon_1=\epsilon_1(2C_0(E))$ as in Lemma~\ref{lemma:short-stability}, and $\epsilon_2<\epsilon_1$, to be determined shortly. Next, we decompose $J$ into $L(\epsilon,\Lambda,E)$ intervals $\{J_i\}_{1\leq i\leq L}$, with $J_i=\intervalco{t_i}{t_{i+1}}$, such that for $i\in\{0,\dots, L-1\}$ and for some $\epsilon<\epsilon_2$,
\[
\norm{u}_{L_{t,x}^5(J_i)}\leq \epsilon\,. 
\]
Now, we write $w=v-u$, we fix $\widetilde{J}^*\Subset J^*$ and we denote $\widetilde{J}_i=\widetilde{J}^*\cap J_i$. We would like to apply Lemma~\ref{lemma:short-stability} on each $\widetilde{J}_i$. To this end, we need to make sure that for all $i\in\set{1,\dots,L-1}$, we have
\[
\norm{v(t_i)}_{H_x^\sigma}\leq 2C_0(E)\,,\quad \norm{w(t_i)}_{H^\sigma}\leq \epsilon_1(2C_0(E))\,.
\]
Indeed, we already know by assumption and by construction that
\[
F^\omega(\widetilde{J}_i)\leq \epsilon_1\,,\quad \norm{u}_{L_{t,x}^5(\widetilde{J}_i)}\leq \epsilon_1\,.
\]
Hence, we shall prove by induction on $i$ that for some  $\epsilon_2<\epsilon_1$, for every $\epsilon<\epsilon_2$ and for every $i\in\set{0,\dots,L_1}$, there exists $C_i=C_i(E,\Lambda)$ such that 
\begin{equation}
\label{eq:induction:long-time-stability}
\norm{w}_{X^\sigma(\widetilde{J}_i)}\leq C_i\epsilon\,,\quad (\underset{1\leq i\leq L}{\sup} C_i)\epsilon_2\leq\epsilon_1\,,\quad \norm{v(t_i)}_{H_x^\sigma}\leq 2C_0(E)\,.
\end{equation}
In the case when $i=0$, we have $w(t_0)=0$ and it follows from the short-time stability Lemma~\ref{lemma:short-stability} applied on $J_1$, where $v$ and $f$ satisfy the smallness condition~\eqref{eq:ass-small-short-time-stability}, that there exists $C_1(E)$ with 
\[
\norm{w}_{X^\sigma(\widetilde{J}_1)}\leq C_1(E)\epsilon\,.
\]
Hence, we chose $\epsilon_2$ such that $C_1(E)\epsilon_2<\epsilon_1(2C_0(E))$. Next, we assume that~\eqref{eq:induction:long-time-stability} holds up to time $t_i$. By Duhamel's integral formulation, and since $w(t_0)=0$, we have 
\[
\norm{w(t_i)}_{H_x^\sigma} \leq \norm{\mathcal{I}\parent{t_0,t_i,\mathcal{N}(v+f)-\mathcal{N}(u)}}_{H_x^\sigma}\,.
\]
Then, we proceed as in the proof of the short-time stability Lemma~\ref{lemma:short-stability}, and we use the multilinear estimates from Propositions~\ref{prop:trilinear},~\ref{prop:trilinear:5}, as well as the induction assumption, to see that 
\[
\norm{\mathcal{I}\parent{t_0,t_i,\mathcal{N}(v+f)-\mathcal{N}(u)}}_{H_x^\sigma}
\leq C\sum_{l=0}^{i-1}\norm{\mathcal{I}\parent{\cdot,\mathcal{N}(v+f)-\mathcal{N}(u)}}_{X^\sigma(\widetilde{J}_l)}\leq C \sum_{l=0}^{i-1}C_l(E)\epsilon\,.
\]
We define $C_i=C(E)C\sum_{1\leq l\leq i-1}C_l$ and choose $\epsilon_2$ such that $C_{i}\epsilon_2\leq\epsilon_1(2C_0(E))$. Subsequently, the smallness condition ~\eqref{eq:ass-small-short-time-stability} is satisfied on the interval $J_{i}=\intervalco{t_i}{t_{i+1}}$, so that we can apply Lemma~\ref{lemma:short-stability} and obtain
\[
\norm{w}_{X^\sigma(\widetilde{J}_i)}\leq C(E)\sum_{0\leq l\leq i-1}C_j\epsilon\,.
\]
In addition, we get from the triangle inequality that 
\[
\norm{v(t_{i+1})}_{H_x^\sigma}\leq \norm{u}_{L^\infty(J_{i+1};H_x^\sigma)}+\norm{w}_{L^\infty(J_{i+1};H_x^\sigma)}\leq C_0(E)+C_{i+1}\epsilon\leq 2C_0(E)\,.
\]
This finishes the proof of the induction result~\eqref{eq:induction:long-time-stability}. Moreover, it follows from the sub-additivity of the $X^\sigma$ norm (see Lemma~\ref{lemma:U2-continuity}) that 
\[
\norm{v}_{X^\sigma(\widetilde{J})}\leq \norm{u}_{X^\sigma(J)}+\sum_{j=1}^L\norm{w}_{X^\sigma(J_j)}\leq C(E,\Xi)\,.
\]
The proof of~\eqref{eq:long-time-stability} follows by using the embedding $X^\sigma(\widetilde{J})\xhookrightarrow{} L_{t,x}^5(\widetilde{J})$, and the monotone convergence theorem which implies that $\norm{v}_{L_{t,x}^5(J)}=\underset{\widetilde{J}\Subset J}{\sup}\norm{v}_{L_{t,x}^5(\widetilde{J})}$.
\end{proof}
\begin{proposition}[Uniform bound in $H_x^\sigma$ implies scattering]
\label{prop:uniform:scattering}
Let $v_0\in H_x^\sigma$, and let $v$ be the maximal-lifespan solution in $H_x^\sigma$ to~\eqref{eq:nls:f} from initial data $v(0)=v_0$. Suppose that we have the uniform a priori bound
\[
\underset{t\in J^*}{\sup}\norm{v(t)}_{H_x^\sigma}\leq E\,.
\]
Then, the solution $v$ is global and scatters as $t$ goes to $\infty$.
\end{proposition}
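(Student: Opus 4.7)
The plan is to reduce the conclusion to the blow-up/scattering criterion of Proposition~\ref{prop:blow-up} by establishing the global spacetime bound
\[
\norm{v}_{L_{t,x}^5(J^*)}<+\infty\,,
\]
and for this to succeed the long-time stability result of Lemma~\ref{lemma:long:stability} must be iterated finitely many times across a partition of $\R$ adapted to the stochastic forcing. The two ingredients that make the iteration close are: (i) the uniform a~priori bound $\norm{v(t)}_{H_x^\sigma}\leq E$ on $J^*$, which does not deteriorate along the iteration; and (ii) the deterministic scattering result Theorem~\ref{theorem:deterministic-gwp}, applicable since $\sigma>2/3$, which supplies a global comparison solution with uniformly bounded $L_{t,x}^5$ norm at each step.

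Concretely, I would first appeal to Theorem~\ref{theorem:deterministic-gwp} to fix $\Lambda=\Lambda(E)>0$ such that for every $t_0\in J^*$, the solution $u$ to~\eqref{eq:nls} with $u(t_0)=v(t_0)$ is global in $H_x^\sigma$ and satisfies $\norm{u}_{L_{t,x}^5(\R\times\R^3)}\leq \Lambda$; this is where the hypothesis $\sigma>2/3$ is used. Next, let $\epsilon_2=\epsilon_2(E,\Lambda)>0$ be the threshold produced by Lemma~\ref{lemma:long:stability}. Since $\omega\in\widetilde{\Omega}_\alpha$ guarantees $F^\omega(\R)\leq C_\alpha<+\infty$ (Lemma~\ref{lemma:rand-infty}), and since $J\mapsto F^\omega(J)$ is absolutely continuous in the endpoints (each of the norms composing $F^\omega$ is a finite integral), one can partition $\R$ into a finite number $L=L(C_\alpha,\epsilon_2)$ of consecutive intervals $\{J_i\}_{1\leq i\leq L}$ such that $F^\omega(J_i)\leq \epsilon_2$ for every $i$.

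I then iterate forward from $t=0$: on $J_1\cap J^*$, the stochastic smallness $F^\omega(J_1)\leq\epsilon_2$ combined with $\norm{v(0)}_{H_x^\sigma}\leq E$ and the deterministic bound $\norm{u}_{L_{t,x}^5}\leq\Lambda$ places us in the scope of Lemma~\ref{lemma:long:stability}, which yields
\[
\norm{v}_{L_{t,x}^5(J_1\cap J^*)}\leq C(E,\Lambda)\,.
\]
Crucially, the constant on the right depends only on $E$ and $\Lambda$, not on the interval or on the iteration index. Since the a~priori bound $\norm{v(t_2)}_{H_x^\sigma}\leq E$ remains in force at the right endpoint $t_2$ of $J_1$, we restart the same argument with the comparison solution rooted at $v(t_2)$, which again scatters in $H_x^\sigma$ with $L_{t,x}^5$ norm bounded by the same $\Lambda(E)$. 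Repeating this at most $L$ times (and symmetrically in the backward direction) and summing,
\[
\norm{v}_{L_{t,x}^5(J^*)}^5\leq L\,C(E,\Lambda)^5<+\infty\,,
\]
so that Proposition~\ref{prop:blow-up} gives $J^*=\R$ together with scattering as $t\to\pm\infty$.

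The only non-obvious point is the legitimacy of the iteration: one must verify that at each endpoint $t_i$ the solution $v$ is still in $J^*$ and that the output of Lemma~\ref{lemma:long:stability} extends $v$ to the closed endpoint with the $H_x^\sigma$ bound preserved. This is where the uniform hypothesis $\sup_{t\in J^*}\norm{v(t)}_{H_x^\sigma}\leq E$ is decisive, because the comparison argument does not accumulate constants from step to step---both $E$ and $\Lambda(E)$ stay fixed---so the finite partition count $L$ (depending only on $E$ and $C_\alpha$) suffices to traverse all of $J^*$.
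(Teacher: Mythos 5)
Your proposal is correct and follows essentially the same route as the paper's proof: reduce via the blow-up criterion (Proposition~\ref{prop:blow-up}) to a global $L_{t,x}^5$ bound, use Theorem~\ref{theorem:deterministic-gwp} to get a uniform $\Lambda(E)$ for the comparison solutions rooted at $v(t_i)$, partition the time axis into finitely many intervals on which $F^\omega$ is below the threshold of Lemma~\ref{lemma:long:stability}, and iterate that lemma, the uniform hypothesis $\sup_{t\in J^*}\norm{v(t)}_{H_x^\sigma}\leq E$ preventing any accumulation of constants. The only cosmetic difference is that you partition $\R$ and intersect with $J^*$ rather than partitioning $J^*$ directly, which changes nothing.
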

\begin{proof}
We get from the blow-up criterion of Lemma~\ref{prop:blow-up} that it suffices to prove the global spacetime bound 
\begin{equation}
\label{eq:goal}
\norm{v}_{L_{t,x}^5(J^*)}<+\infty\,.
\end{equation}
Recall that the \textit{deterministic} Cauchy theory in $H^\sigma$, for $2/3<\sigma\leq1$, provides such a bound. Indeed, Theorem~\ref{theorem:deterministic-gwp} claims that there exists $\Lambda(E)$ such that for any solution $u$ to~\eqref{eq:nls} starting from a data $u(t_0)$ with $\norm{u(t_0)}_{H_x^\sigma}\leq E$, we have
\[
\norm{u}_{L_{t,x}^5(\R)}\leq \Lambda(E)\,.
\]
Subsequently, we fix $\epsilon_2(E,\Lambda(E))=\epsilon_2(E)$ as in Lemma~\ref{lemma:long:stability}, and we divide $J^*$ into a finite number $L(E)$ of intervals $J_i=\intervalco{t_i}{t_{i+1}}$, with $t_0=0$, such that 
\[
F^\omega(J_i)\leq \epsilon_1,\quad j\in\{1,\dots,L\}\,.
\]
Next, for each $i\in\set{0,\dots,L-1}$, we consider the global solution $u$ to~\eqref{eq:nls} with initial data $u(t_i)=v(t_i)$. Since $\norm{v(t_i)}\leq E$ by asumption, we have
\[
\norm{v}_{}\leq \Lambda(E)\,.
\]
Hence, we can apply Lemma~\ref{lemma:long:stability} on $J_i$, and obtain
\[
\norm{v}_{L_{t,x}^5(J_i)}\leq C_i(E)\,.
\]
By summing over the spacetime slabs, we conclude that 
\[
\norm{v}_{L_{t,x}^5(J^*)}\leq C(E,\Lambda(E))\leq C(E)\,.
\]
This finishes the proof of Proposition~\ref{prop:uniform:scattering}.
\end{proof}
\section{Almost conservation laws}
In this section, we prove some almost conservation laws in order to obtain the uniform bound on the $H^\sigma$ norm of $v$. Recall that throughout this section, $\epsilon>0$ and $\omega\in\widetilde{\Omega}_\alpha$ are fixed, such that for $N$ large enough, 
\[
F^\omega(\R)+F_\infty^\omega(\R)\leq C_\alpha,\quad F_2^\omega(\R)\leq C_\alpha N^{\frac{1-\sigma}{2}-\gamma}\,.
\]
Since there is no coercive conservation law at the level of $H^\sigma$ for~\eqref{eq:nls}, we need to use a modified energy. More precisely, we consider the energy of $Iv$, which is finite but not preserved, in order to damp the frequencies of size larger than $N$. However, we expect the time-derivative of the modified energy to be small when $N$ is large, at least on a spacetime slab where there hold some smallness conditions for the $L_{t,x}^4$ norm of $v$ and spacetime bounds for the stochastic forcing term. 
\label{sec:conservation}
\subsection{Setting up the I-method}
Let $v\in H_x^\sigma(\R^3)$ be the maximal lifespan solution to~\eqref{eq:nls:f} on $\intervalco{0}{T^*}$. There is no difficulty to estimate the low frequencies of $v$ since the $L^2$-norm is almost-conserved, and the forcing term is uniformly in $L^2$.
\begin{lemma}[Almost conservation of the mass (see Lemma 7.1 in~\cite{oh-okamoto-19})]
\label{lemma:mass}
Let $v$ be solution to~\eqref{eq:nls:f} on a time interval $J$. Then, 
\[
\int_{\R^3}\abs{v(t,x)}^2dx\lesssim\int_{\R^3}\abs{v_0(x)}^2dx+\int_{\R^3}\abs{f_0(x)}^2dx\,.
\]
\end{lemma}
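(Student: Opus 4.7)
The plan is to reduce the statement to the mass conservation laws for the unperturbed cubic NLS and for the free Schrödinger evolution, via the substitution
\[
u \coloneqq v + f^\omega\,,\qquad f^\omega(t)\coloneqq \e^{it\Delta}f_0^\omega\,.
\]
Summing the equation~\eqref{eq:nls:f} satisfied by $v$ with the free equation $i\partial_t f^\omega + \Delta f^\omega = 0$ telescopes the stochastic forcing $(\abs{v+f}^2(v+f)-\abs{v}^2v)$ against the $\abs{v}^2 v$ term, leaving exactly the defocusing cubic Schrödinger equation~\eqref{eq:nls} for $u$, with Cauchy data $u(0)=v_0+f_0^\omega$.

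From here the argument is in two short steps. First, the mass of the NLS solution $u$ is preserved on $J$, i.e.\ $\norm{u(t)}_{L_x^2}=\norm{v_0+f_0^\omega}_{L_x^2}$. This is the classical identity
\[
\frac{d}{dt}\int\abs{u}^2 dx = -2\im\int \bar u\,\Delta u\,dx + 2\im\int \abs{u}^4 dx = 0\,,
\]
both integrands being real. Although $u \in C(J;H_x^s)$ with $s$ possibly below $1/2$, so that NLS is ill-posed in $H^s$ itself, the identity is justified here because $u$ is constructed from the specific pair $(v,f^\omega)$ with $v\in C(J;H_x^\sigma)$, $\sigma>1/2$, and the conservation can be obtained by smooth approximation of $v_0+f_0^\omega$ together with the uniqueness in $X^\sigma$ provided by Proposition~\ref{prop:lwp}. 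Second, the free Schrödinger group being an $L_x^2$-isometry, $\norm{f^\omega(t)}_{L_x^2}=\norm{f_0^\omega}_{L_x^2}$.

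Combining these two ingredients through $v = u - f^\omega$ and the triangle inequality,
\[
\norm{v(t)}_{L_x^2}\leq\norm{u(t)}_{L_x^2}+\norm{f^\omega(t)}_{L_x^2}\leq\norm{v_0}_{L_x^2}+2\norm{f_0^\omega}_{L_x^2}\,,
\]
and squaring yields the stated bound, up to the almost-sure comparison $\norm{f_0^\omega}_{L_x^2}\lesssim\norm{f_0}_{L_x^2}$ on $\widetilde\Omega_\alpha$. The latter follows from the large deviation estimate of Lemma~\ref{lemma:large:deviation} applied to the random variable $\norm{f_0^\omega}_{L_x^2}^2=\sum_k\abs{g_k(\omega)}^2\norm{Q_kf_0}_{L_x^2}^2$, whose expectation is comparable to $\norm{f_0}_{L_x^2}^2$ since the $Q_k$ form a smooth partition of unity in the Fourier variable. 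I do not foresee any genuine obstacle; the sole technical point is the rigorous justification of mass conservation for $u$ at supercritical regularity, which is standard and could alternatively be cited directly from~\cite{oh-okamoto-19}, Lemma 7.1.
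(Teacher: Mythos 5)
Your proof is correct and follows essentially the same approach as the paper: reduce to mass conservation for the NLS solution $u = v + f^\omega$ together with the $L_x^2$-isometry of the free Schrödinger group, then conclude by elementary algebra (the paper expands the square $\norm{u(t)}_{L_x^2}^2=\norm{v_0+f_0^\omega}_{L_x^2}^2$ to extract the cross-term $-2\re\int v\overline{f^\omega}\,dx$ and applies Young's inequality, where you use the triangle inequality — these are interchangeable). You also correctly flag that passing from $\norm{f_0^\omega}_{L_x^2}$ to $\norm{f_0}_{L_x^2}$ requires a large-deviation restriction on $\omega$, a point the paper leaves implicit.
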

\begin{proof}
The proof is a straightforward consequence of the conservation of mass for~\eqref{eq:nls} and the unitarity of the linear Schrödinger propagator $\e^{it\Delta}$, from which we deduce that 
\[
\int_{\R^3}\abs{v(t)}^2dx=-2\re\parent{\int v(t)\overline{f^\omega(t)}dx}\leq \frac{1}{2}\int\abs{v(t)}^2dx + 2\int\abs{f^\omega(t)}^2dx\,.
\]
We refer to Lemma 7.1 in~\cite{oh-okamoto-19} for further details.
\end{proof}
To obtain some estimates on the high frequencies of $v$, which has infinite energy, we need to smooth it by applying the I-operator, and by considering $Iv$, which has finite energy, or sometimes $Iu=I(v+f)$. The aim is to prove the uniform bound 
\begin{equation}
\label{eq:ue}
    \underset{t\in\intervalco{0}{T^*}}{\sup}\norm{Iv(t)}_{H^1(\R^3)}<+\infty\,.
\end{equation}
Indeed, we deduce from this that 
\begin{equation}
\label{eq:ue-sigma}
    \underset{t\in\intervalco{0}{T^*}}{\sup}\norm{v(t)}_{H_x^\sigma(\R^3)}<+\infty\,.
\end{equation}
With this uniform estimate~\eqref{eq:ue-sigma} on $\norm{v}_{H_x^\sigma}$ at hand, we will be in position to apply Proposition~\ref{prop:uniform:scattering} to show that the solution $v$ to~\eqref{eq:nls:f} is global-in-time and scatters. Now, observe that the truncated solution $Iu$, as well as $Iv$, satisfies the perturbed cubic Schrödinger equation
\begin{equation}
    \tag{$I-$NLS}
    \label{eq:nls:I}
    i\partial_tIu+\Delta Iu = \mathcal{N}(Iu) + H\,,
    \quad (t,x)\in\R\times\R^3\,,
\end{equation}
where we denoted by $H$ the commutator
\begin{equation}
\label{eq:def:H}
    H \coloneqq I\mathcal{N}(u)-\mathcal{N}(Iu)\,.
\end{equation}
This perturbation $H$ can be seen as a trilinear operator acting on functions $u^{(j)}$ of type $v$ or $f$, and some appropriate spacetime norms of $H$ enjoy some decay with respect to $N$ thanks to frequency cancellations. These cancellations come from the gauge-invariant structure of the equation, and from the definition of the I-operator.
\begin{definition}
Given a time-interval $J=\intervaloo{t_0}{b}\subset\intervalco{0}{T^*}$, we define the  Strichartz norm of a frequency truncated function $v$ by
\[
Z_{I}(J) \coloneqq\norm{Iv}_{S^1(J)} =\sup_{s(q,r)=0}\norm{\japbrak{\nabla}Iv}_{L_t^q(J;L_x^r(\R^3))}\,,
\]
where we recall that $s(q,r)=\frac{2}{q}-d\parent{\frac{1}{2}-\frac{1}{r}}$ encodes the admissibility condition for the Strichartz estimate to hold.
\end{definition}
We shall prove that $Z_I(J)$ is of size $\sim N^{1-\sigma}$ provided $E(Iv(t_0))\leq N^{2(1-\sigma)}$ and $\norm{Iv}_{L_{t,x}^4(J)}^4\ll 1$. This is the matter of Lemma~\ref{lemma:Z}. Before this, let us state and prove the main result of this section, where we estimate the spacetime Lebesgue norm of $wH$ for a given function $w$ that lies uniformly in $L_x^2$.
\begin{lemma}
\label{lemma:H}
Let $J\subset\intervalco{0}{T^*}$ such that 
$Z_I(J)<+\infty$, and let $w\in L_t^\infty(J;L_x^2(\R^3))$. For all $\delta>0$, there exists a constant $C$ such that we have
\begin{multline}
\label{eq:lemma:H}
\int_J\absbig{\int_{\R^3}w(t,x)\overline{H}(t,x)dx}dt\leq CN^{-1+\delta}\underset{K\in2^\N}{\sup}\parent{K^{-1}\norm{P_Kw}_{L_t^\infty L_x^2(J)}}\\ \parent{Z_I(J)^2+F_2^\omega(J)2}\parent{Z_I(J)+F_\infty^\omega(J)}\,.
\end{multline}
\end{lemma}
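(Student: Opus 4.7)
The strategy is to exploit frequency cancellations in the commutator $H = I\mathcal{N}(u) - \mathcal{N}(Iu)$ via a trilinear Littlewood--Paley decomposition. Writing $u = v + f$, I would expand $H$ as a finite sum over triples $(u^{(1)}, u^{(2)}, u^{(3)}) \in \{v, f\}^3$ of pieces
\[
I\mathcal{N}(u^{(1)}, u^{(2)}, u^{(3)}) - \mathcal{N}(Iu^{(1)}, Iu^{(2)}, Iu^{(3)}).
\]
After applying Littlewood--Paley projections $P_{N_j}$ to each argument, each dyadic piece has Fourier symbol
\[
M(\xi_1, \xi_2, \xi_3) = m(\xi_1 - \xi_2 + \xi_3) - m(\xi_1) m(\xi_2) m(\xi_3).
\]
Since $m \equiv 1$ on $\{\abs{\xi}\leq N\}$, $M$ vanishes when all three frequencies are $\leq N$, so only configurations with $N_{(1)} \gtrsim N$ contribute (denoting by $N_{(1)} \geq N_{(2)} \geq N_{(3)}$ the non-increasing reordering of $N_1,N_2,N_3$). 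A standard mean-value argument on $M$ yields a bound $\abs{M} \lesssim (N_{(2)}/N_{(1)}) m(N_{(1)})\, m(N_{(2)}) m(N_{(3)})$ in the high-low regime, and comparable bounds in the other regimes, encoding the usual commutator smoothing of~\cite{ckstt-2004} and producing the one-derivative gain from which the $N^{-1}$ factor is extracted.

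Then I would pair with $w$ through its Littlewood--Paley blocks $P_K w$. Only blocks with $K \lesssim N_{(1)}$ interact with the frequency support of each commutator piece, so factoring the weight $K^{-1}$ out of $w$ is equivalent to absorbing one factor of $N_{(1)}$, which combines with the gain from $M$ to produce exactly $N^{-1+\delta}$ after the dyadic summation, the $\delta$ absorbing the logarithmic loss. To bound the remaining trilinear spacetime integrals I would apply H\"older together with the transferred bilinear Strichartz estimate of Proposition~\ref{prop:transfer}: pair the highest and lowest frequency factors in $L^2_{t,x}$, gaining $N_{(3)}^{1/2}N_{(1)}^{-1/2+}$; place the medium factor in a Strichartz space $L_t^q L_x^r$; place $w$ in $L_t^\infty L_x^2$ via the weighting. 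Two of the three factors then naturally carry a derivative and are routed into $Z_I(J)$ when they are $v$-type and into $F_2^\omega(J)$ when they are $f$-type, while the third enters through an $L^\infty$-in-space norm controlled either by $Z_I(J)$ (using $\norm{\japbrak{\nabla}Iv}_{L_t^\infty L_x^2}$ and a Sobolev-type embedding) or by $F_\infty^\omega(J)$. Summing over the eight type assignments produces precisely the combination $(Z_I^2 + F_2^2)(Z_I + F_\infty)$ in the statement.

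The main obstacle is the case-by-case bookkeeping: for each of the eight type assignments in $\{v,f\}^3$ and each frequency configuration (high-high-high, high-high-low, high-low-low), one must verify that the chosen H\"older and bilinear splitting simultaneously extracts the full $N^{-1}$ gain from $M$ and packages the factors into the claimed product of norms. The most delicate situations are those where an $f$-factor sits at the top frequency, since then the derivative gained from the commutator must be absorbed into $F_2^\omega(J)$ via the local-smoothing-type estimate of Proposition~\ref{prop:ass:data:radial} rather than into $Z_I(J)$; in particular the pure-random contribution $(f,f,f)$ must produce the $F_2^{\omega,2}\, F_\infty^\omega$ term. Once each case is checked, the finite sum over types and the dyadic summation over $N_1,N_2,N_3,K$ absorb into $N^\delta$ and yield~\eqref{eq:lemma:H}.
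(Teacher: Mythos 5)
Your overall strategy matches the paper's up to a point: expand the pairing in Fourier variables, exploit the cancellation of the commutator symbol (it vanishes unless $N_{(1)}\gtrsim N$), bound the multiplier by a CKSTT-type mean-value estimate, and absorb the weight $K^{-1}$ on $P_Kw$ using $K\lesssim N_{(1)}$. The paper then passes from the pointwise multiplier bound to a product of norms via the Coifman--Meyer theorem (a step you gloss over) and closes with a plain H\"older splitting $L_t^\infty L_x^2\times L_t^2L_x^6\times L_t^2L_x^6\times L_t^\infty L_x^6$, placing one derivative on each of the two $L_t^2L_x^6$ factors; the multiplier bound $B\cdot N_{(1)}(N_2N_3)^{-1}\lesssim N^{-1+\delta}\max(N_i)^{-\delta}$ already supplies the full gain, and only afterwards is $Iu$ split as $Iv+If$, routing $\norm{\japbrak{\nabla}Iu}_{L_t^2L_x^6}$ into $Z_I+F_2$ and $\norm{Iu}_{L_t^\infty L_x^6}$ into $Z_I+F_\infty$ (Sobolev for the $Iv$ piece). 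No bilinear refinement is needed.

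The genuine gap in your plan is the reliance on the transferred bilinear Strichartz estimates of Proposition~\ref{prop:transfer} to pair the extreme-frequency factors in $L^2_{t,x}$. Those estimates require $U^2_\Delta$ or $V^2_\Delta$ control of the paired functions. That is available for $f=\e^{it\Delta}f_0^\omega$ (a free evolution), but not for $v$ within this lemma: the hypothesis only provides $Z_I(J)<\infty$, i.e.\ Strichartz norms of $\japbrak{\nabla}Iv$, and the conclusion must be expressed through $Z_I$, $F_2^\omega$, $F_\infty^\omega$ alone. Strichartz/Lebesgue norms do not control $V^2_\Delta$ norms (the embeddings in~\eqref{eq:embedding:U2} go the other way), so in every frequency configuration where a $v$-factor sits at the top or bottom of your chosen $L^2_{t,x}$ pair, your scheme produces norms that are neither assumed nor allowed on the right-hand side of~\eqref{eq:lemma:H}, and the argument cannot be closed as stated. (There is also a technical mismatch: after replacing the symbol by its supremum one needs a Coifman--Meyer-type argument to factor the multilinear form into norms, which is compatible with the H\"older route but delicate to combine with bilinear $L^2_{t,x}$ pairings.) The fix is simply to drop the bilinear step and estimate the quadrilinear form by H\"older as above, which is exactly what the paper does.
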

The idea is to use the frequency cancellations from the $I$ operator in the error term $H$ to gain a negative power of $N$. As we will see, we shall often come across terms as in the left-hand-side of~\eqref{eq:lemma:H}, especially when we estimate the modified energy and the modified Morawetz interaction. Hence,  obtaining estimates like~\eqref{eq:lemma:H} turns out to be the crucial part of our analysis.
\begin{proof}
The strategy of the proof follows the sames lines as the one of Proposition 3 from~\cite{ckstt-2004}, but with some differences. At a fixed time $t\in J$, we use Plancherel's formula, and we develop the nonlinearity in Fourier. We obtain
\begin{equation}
\label{eq:IH}
   \int_{\R^3}w(t,x)\overline{H}(t,x)dx =\int_{\Sigma\xi_i=0}\parentbig{1-\frac{m(\xi_2+\xi_3+\xi_4)}{m(\xi_2)m(\xi_3)m(\xi_4)}}\widehat{w}(t,\xi_1)\widehat{\overline{Iu}}(t,\xi_2)\widehat{Iu}(t,\xi_3)\widehat{\overline{Iu}}(t,\xi_4)\,.
\end{equation}
We perform a Littlewood-Paley decomposition of each term appearing in the above integral, so that each term in the above multilinear integral is localized around a dyadic frequency $N_i$, and we denote by $B$ the point wise bound of the Fourier multiplier 
\[
\underset{\xi_i\sim N_i}{\sup}\absbig{1-\frac{m(\xi_2+\xi_3+\xi_4)}{m(\xi_2)m(\xi_3)m(\xi_4)}}\leq B(N_1,N_2,N_3,N_4)\,.
\]
Next, we factorize by the point wise bound $B$ benefit from the cancellations of the Fourier multiplier appearing in~\eqref{eq:IH}, and we are left to estimate
\[
B(N_1,N_2,N_3,N_4)\absbig{\int_{\R^3}\widehat{\Lambda(w,Iu,Iu,Iu)}(\xi)\widehat{\overline{Iu}}(\xi)d\xi}\,,
\]
for a given multilinear operator $\Lambda$ as in equality (3.22) from~\cite{ckstt-2004}. To estimate such a multilinear integral, we apply Plancherel, Hölder's inequality and the Coifman-Meyer estimate (see Page 179 in~\cite{coifman-meyer-78}). We also integrate in time and use Hölder's inequality in time, to see that the left-hand-side of~\eqref{eq:lemma:H} is bounded from above by 
\[
\sum_{N_i}B(N_1,N_2,N_3,N_4)\norm{P_{N_1}w}_{L_t^\infty L_x^2(J)}\norm{P_{N_2}Iu}_{L_t^2L_x^6(J)}\norm{P_{N_3}Iu}_{L_t^2L_x^6(J)}\norm{P_{N_4}Iu}_{L_t^\infty L_x^6(J)}\,.
\]
It turns out from the case by case analysis conducted in the proof of Proposition 3.1 in~\cite{ckstt-2004} that for all $\delta>0$, there exits $C>0$ such that for all $N_1,N_2,N_3,N_4,N\in2^\N$ we have
\[
B(N_1,N_2,N_3,N_4)N^{-1+\delta}\frac{N_1}{N_2N_3}\leq CN^{-1+\delta}\max(N_1,N_2,N_3,N_4)^{-\delta}\,.
\]
Hence, we are able to sum over the $N_i$'s and to gain a power of $N$. We have therefore
\begin{multline}
\label{eq:proof:H:lemma}
\int_J\absbig{\int_{\R^3}w(t,x)\overline{H}(t,x)dx}dt \\
\leq C\parent{N_1^{-1}\norm{P_{N_1}w}_{L_t^\infty L_x^2}}\norm{\nabla Iu}_{L_t^2L_x^6(J)}\norm{\nabla Iu}_{L_t^2L_x^6(J)}\norm{Iu}_{L_t^\infty L_x^6(J)}\,.
\end{multline}
It remains to estimate the contributions of the terms on the right hand side in~\eqref{eq:proof:H:lemma}. Each term $Iu$ is decomposed into $Iu=If+Iv$. To estimate the terms $Iv$ in $L_t^2L_x^6$, we use the endpoint Strichartz estimate, whereas We need Sobolev embedding to estimate $Iv$ in $L_t^\infty L_x^6$. As for the terms $If$, we only use the definitions from Section~\ref{section:random}. This concludes the proof of Lemma~\ref{lemma:H}.
\end{proof}
Let us now address the a priori estimate for $Z_I(J)$ on a spacetime slab, where we assume a bound for the modified energy of $v$, as well as a smallness condition on spacetime norms of $Iv$ and $f$. More precisely, provided that the energy of $Iv$ is initially of size $N^{2(1-\sigma)}$, we prove that $Z_I(J)$ is of size $\sim N^{1-\sigma}$ on an interval $J$ where there holds~\eqref{eq:ass:4} and~\eqref{eq:ass:4f}.
\begin{lemma}[Modified local existence theory]
\label{lemma:Z}
Let $t_0\in J^*$ be such that 
\[
\mathcal{E}(v)(t_0)\leq N^{2(1-\sigma)}\,.
\]
There  $C>0$ and $\eta>0$ that do not depend on $N$, $\epsilon_N= N^{-2(1-\sigma)-\delta}$ for some arbitrarily small $\delta$, and $N_0=N_0(C_\alpha,f_0)$ such that for any interval $J=\intervalco{t_0}{b}\subset\intervalco{0}{T^*}$ on which the following smallness conditions~\footnote{Note that $(4,3)$ is an admissible pair. Hence, $\norm{f}_{L_t^4L_x^3(\R)}\leq C\norm{f_0}_{L_x^2}$ for every $\omega$.} are satisfied
\begin{align}
    \label{eq:ass:4}
    \norm{Iv}_{L_{t,x}^4(J)}^4&\leq \epsilon_N\,,\\
    \label{eq:ass:4f}
    \norm{f}_{L_t^4L_x^3(J)}&\leq\eta\,,
\end{align}
we have that for all $N\geq N_0$,
\begin{equation}
\label{eq:Z:apriori}
Z_I(J)\leq CN^{1-\sigma}\,.
\end{equation}
\end{lemma}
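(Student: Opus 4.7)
The plan is to run a Strichartz bootstrap for $Iv$, exploiting the smallness conditions~\eqref{eq:ass:4}--\eqref{eq:ass:4f}, the stochastic decay $F_2^\omega(J)\lesssim N^{\frac{1-\sigma}{2}-\gamma}$, and the $N^{-1+\delta}$ gain from the commutator $H$ in the spirit of the I-method. First, I would note that the energy hypothesis $\mathcal{E}(v)(t_0)\leq N^{2(1-\sigma)}$ yields $\norm{\nabla Iv(t_0)}_{L^2}\lesssim N^{1-\sigma}$, and that Lemma~\ref{lemma:mass} combined with $\norm{Iv}_{L^2}\leq\norm{v}_{L^2}$ gives $\norm{Iv(t_0)}_{L^2}\lesssim\norm{v_0}_{L^2}+\norm{f_0}_{L^2}$, hence $\norm{Iv(t_0)}_{H^1}\lesssim N^{1-\sigma}$ once $N\geq N_0$. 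Since $Iv$ solves $i\partial_t Iv+\Delta Iv=\mathcal{N}(Iu)+H$ with $u=v+f$ and $H$ as in~\eqref{eq:def:H}, Strichartz estimates give, for any admissible pair $(q,r)$ and an appropriately chosen admissible dual pair $(q_0',r_0')$,
\begin{equation*}
\norm{\japbrak{\nabla}Iv}_{L_t^qL_x^r(J)}\lesssim\norm{Iv(t_0)}_{H^1}+\norm{\japbrak{\nabla}\mathcal{N}(Iu)}_{L_t^{q_0'}L_x^{r_0'}(J)}+\norm{\japbrak{\nabla}H}_{L_t^{q_0'}L_x^{r_0'}(J)}\,.
\end{equation*}

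I would then expand $\mathcal{N}(Iu)=\mathcal{N}(Iv+If)$ into its eight trilinear pieces and bound each by fractional Leibniz followed by a Hölder triple tailored to the piece. The pure term $\mathcal{N}(Iv)$ absorbs an $\epsilon_N^{1/2}$ factor from $\norm{Iv}_{L_{t,x}^4}^2$, contributing $\lesssim\epsilon_N^{1/2}Z_I(J)\ll N^{1-\sigma}$. For mixed terms carrying both $Iv$ and $If$, when the derivative falls on $Iv$ I combine the $L_{t,x}^4$-smallness of $Iv$ with the $\eta$-smallness of $\norm{f}_{L_t^4L_x^3(J)}$ and bounded $F^\omega,F_\infty^\omega\leq C_\alpha$ factors to close with a small constant; when the derivative falls on $If$, I use $\norm{\japbrak{\nabla}If}_{L_t^2L_x^6(J)}\leq F_2^\omega(J)\lesssim N^{\frac{1-\sigma}{2}-\gamma}$ in combination with an $Iv$-factor in a spacetime norm carrying $\epsilon_N$-smallness, rather than a full $Z_I(J)$ factor. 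The pure stochastic cube $\mathcal{N}(If)$ only involves $F^\omega,F_\infty^\omega,F_2^\omega$ and is absorbed once $N_0$ is large. The commutator $\norm{\japbrak{\nabla}H}_{L_t^{q_0'}L_x^{r_0'}}$ I would handle by the multilinear I-method commutator estimate --- analogous to Lemma~\ref{lemma:H} dualized against a Strichartz test function --- where the symbol bound $B(N_1,N_2,N_3,N_4)$, Coifman--Meyer, and a case-by-case analysis over frequency regimes yield a gain of $N^{-1+\delta}$, rendering this contribution $\lesssim N^{-1+\delta}\bigl(Z_I(J)+F_2^\omega(J)+F_\infty^\omega(J)\bigr)^3\ll N^{1-\sigma}$ for $\sigma>6/7$.

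To close, I would set $\phi(b')\coloneqq Z_I([t_0,b'])$, observe that $\phi$ is continuous on $[t_0,b)$ with $\lim_{b'\to t_0^+}\phi(b')\lesssim N^{1-\sigma}$, and run a bootstrap: under the hypothesis $\phi(b')\leq 2CN^{1-\sigma}$ the estimates above improve to $\phi(b')\leq CN^{1-\sigma}(1+o(1))$, where $o(1)$ collects $\epsilon_N^{1/2}+\eta+N^{-\gamma}+N^{-\sigma+\delta}$. Choosing $\eta$ small enough and $N_0=N_0(C_\alpha,\norm{f_0}_{H^s})$ large enough then gives $\phi(b')\leq CN^{1-\sigma}$, and the bootstrap propagates throughout $J$. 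The hard part will be the careful case analysis of the mixed $Iv$/$If$ trilinear pieces --- particularly those where the derivative falls on $If$, since $F_2^\omega$ only provides $N^{(1-\sigma)/2-\gamma}$ smallness: each such term must be arranged so that no full factor of $Z_I(J)$ appears but rather an $Iv$-factor carrying an $\epsilon_N$- or $\eta$-gain, a delicate bookkeeping of smallness sources against the admissible dual Strichartz exponents in $\R^3$.
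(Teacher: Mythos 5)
Your proposal follows essentially the same route as the paper: Duhamel for the $I$-equation, dual Strichartz estimates, a case-by-case trilinear analysis drawing smallness from $\epsilon_N$, $\eta$, $F^\omega$, $F_\infty^\omega$ and the $N^{\frac{1-\sigma}{2}}$ bound on $F_2^\omega$, the commutator $H$ handled through the Lemma~\ref{lemma:H}-type $N^{-1+\delta}$ gain via $U_\Delta^2$/$V_\Delta^2$ duality, and a continuity bootstrap on $Z_I$. One bookkeeping caveat: for the pure term $\mathcal{N}(Iv)$ you cannot place two factors in $L_{t,x}^4$ against an admissible norm of $\japbrak{\nabla}Iv$ and land in a dual-admissible space (the exponents sum to $4$ rather than $7/2$), so the claimed $\epsilon_N^{1/2}Z_I(J)$ bound is not literally attainable; the fix, as in the paper, is to interpolate $\norm{Iv}_{L_t^6L_x^{9/2}}\leq\norm{Iv}_{L_{t,x}^4}^{2/3}\norm{Iv}_{L_t^\infty L_x^6}^{1/3}$ and work in $L_t^{6/5}L_x^{18/11}$, which yields $\epsilon_N^{1/3}Z_I(J)^{5/3}$ and still closes the bootstrap.
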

For now on, we assume that $N>N_0$, such that~\eqref{eq:Z:apriori} holds whenever the assumptions of Lemma~\ref{lemma:Z} are satisfied. 
\begin{proof}
Recall that $Iv$ is solution to~\eqref{eq:nls:I}. By the Duhamel's integral formulation for equation~\eqref{eq:nls:I}, we have that
\begin{equation*}
\begin{split}
Iv(t) &= \e^{i(t_0)\Delta}Iv(t_0)-\int_{t_0}^t\e^{i(t-\tau)\Delta}\parent{\mathcal{N}(Iu)+H}d\tau \\
&= \e^{i(t_0)\Delta}Iv(t_0) + \mathcal{I}\parent{\intervaloo{t_0}{t};\mathcal{N}_1}+\mathcal{I}\parent{\intervaloo{t_0}{t};\mathcal{N}_1}+\mathcal{I}\parent{\intervaloo{t_0}{t};H} \,,
\end{split}
\end{equation*}
where~\footnote{\ We also have similar terms, but the analysis does not depend on the complex conjugates at this stage.}
\[
\mathcal{N}_1 \coloneqq \mathcal{N}(v,v,v)+\mathcal{N}(f,f,f)+\mathcal{N}(f,v,v)\,,\quad \mathcal{N}_2=\mathcal{N}(f,f,v)\,.
\]
It follows from the dual Strichartz estimate of Proposition~\ref{proposition:strichartz} that
\[
    Z_I(J)\leq C\parentbig{ \norm{Iv(t_0)}_{H^1}+\underbrace{\norm{\japbrak{\nabla}I\mathcal{N}_1}_{L_t^{6/5}L_x^{18/11}(J)}}_{I_1}+\underbrace{\norm{\japbrak{\nabla}I\mathcal{N}_2}_{L_t^1L_x^2(J)}}_{I_2}+\underbrace{\norm{\mathcal{I}(\intervaloo{t_0}{t},\japbrak{\nabla}IH)}_{U_\Delta^2(J)}}_{I_3}}\,.
\]
\textbf{Term} $\operatorname{I_1}$: We deduce from the Leibniz rule that
\[\norm{\japbrak{\nabla}I(\mathcal{N}(Iv))}_{L_t^{6/5}L_x^{18/11}}\leq \norm{\japbrak{\nabla}Iv}_{L_t^2L_x^6}\norm{Iv}_{L_t^6L_x^{9/2}}^2\,.\]
By interpolation and Sobolev embedding, we have
\[\norm{Iv}_{L_t^6L_x^{9/2}}\leq\norm{Iv}_{L_{t,x}^4(J)}^\frac{2}{3}\norm{Iv}_{L_t^\infty L_x^6(J)}^\frac{1}{3}\leq C\epsilon_N^\frac{1}{6}Z_I(J)^\frac{1}{3}\,.\]
Consequently, the contribution of this term is less than
\[\norm{\japbrak{\nabla}I(\mathcal{N}(Iv))}_{L_t^{6/5}L_x^{18/11}} \leq C\epsilon_N^\frac{1}{3}Z_I(J)^{1+\frac{2}{3}}\,.\]
Similarly, we have
\begin{equation*}
\begin{split}
\norm{\japbrak{\nabla}I(\mathcal{N}(Iv,Iv,If))}_{L_t^{6/5}L_x^{18/11}}& \\
&\leq \norm{\japbrak{\nabla}If}_{L_t^2L_x^6}\norm{Iv}_{L_t^6L_x^{9/2}}^2+\norm{\japbrak{\nabla}Iv}_{L_t^2L_x^6}\norm{Iv}_{L_t^6L_x^{9/2}}\norm{If}_{L_t^6L_x^{9/2}} \\
&\leq CZ_I(J)\parent{F_2(J)\epsilon_N^\frac{1}{3}Z_I(J)^{-\frac{1}{3}}+\epsilon_N^\frac{1}{6}Z_I(J)^\frac{1}{3}\norm{If}_{L_{t,x}^4}^{2/3}\norm{If}_{L_t^\infty L_x^6}^\frac{1}{3}} \\
&\leq CZ_I(J)\parent{F_2(J)\epsilon_N^\frac{1}{3}Z_I(J)^{-\frac{1}{3}}+\epsilon_N^\frac{1}{6}Z_I(J)^\frac{1}{3}{F_\infty^\omega(J)}^\frac{1}{3}{F^\omega(J)}^\frac{2}{3}}\,.
\end{split}
\end{equation*}
Additionally, 
\[
\norm{\japbrak{\nabla}(If)(If)^2}_{L_t^{6/5}L_x^{18/11}}\leq \norm{\japbrak{\nabla}If}_{L_t^2L_x^6}\norm{If}_{L_t^6L_x^{9/2}}^2\leq C\norm{f_0}_{L_x^2}^2F_2(J)\,.
\]
Hence, the contribution of the first term is less than 
\[
\operatorname{I}_1 \leq CZ_I(J)\parent{F_2(J)Z_I(J)^{-1}+\epsilon_N^\frac{1}{3}Z_I(J)^\frac{2}{3}+\epsilon_N^\frac{1}{3} N^\frac{1-\sigma}{2}Z_I(J)^{-\frac{1}{3}}C_\alpha+\epsilon_N^\frac{1}{6}Z_I(J)^\frac{1}{3}C_\alpha}\,.
\]
\item\textbf{Term} $\operatorname{I_2}$: Once again, when a derivative falls on the term $If$, we need to place it inside $L_t^2$, and use the improved Sobolev embedding for randomized radial initial data. It follows from the Leibniz rule that 
\[
\norm{\japbrak{\nabla}I\mathcal{N}_2}_{L_t^1L_x^2}\leq C\norm{(\japbrak{\nabla}If)(If)(Iv) }_{L_t^1L_x^2(J)}+\norm{(\japbrak{\nabla}Iv)(If)^2 }_{L_t^1L_x^2(J)}\,.
\]
We use Hölder's inequality, Sobolev embedding and the assumption that $s\geq 3/7$ to get
\[
\norm{(\japbrak{\nabla}If)(If)(Iv)}_{L_t^1L_x^2(J)}\leq C\norm{\japbrak{\nabla}If}_{L_t^2L_x^6(J)}\norm{If}_{L_tL_x^{12}(J)}\norm{Iv}_{L_{t,x}^4(J)}
\leq CC_\alpha^2\epsilon_N^\frac{1}{4}N^\frac{1-\sigma}{2}\,.
\]
Finally, 
\[
\norm{\japbrak{\nabla}Iv(If)^2}_{L_t^1L_x^2(J)}\lesssim\norm{\japbrak{\nabla}Iv}_{L_t^2L_x^6(J)}\norm{If}_{L_t^4L_x^3(J)}^2
\lesssim\eta^2 Z_I(J) \,.
\]
Collecting the above estimates, we see that the contribution of this term is less than
\[
\operatorname{I}_2\leq CZ_I(J)\parent{C_\alpha^3N^{\frac{1-\sigma}{2}}Z_I(J)^{-1}+C_\alpha^2\epsilon_N^\frac{1}{4}N^{-\frac{1-\sigma}{2}}+\eta^2}\,.
\]
\textbf{Term} $\operatorname{I_3}$: It follows from the duality estimate in $U_\Delta^2$ as stated in Proposition~\ref{proposition:dual:U2} and from Lemma~\ref{lemma:Z} that
\begin{equation*}
\begin{split}
&\norm{\mathcal{I}((t_0,t),\japbrak{\nabla}IH)}_{U_\Delta^2}\leq C\underset{\norm{w}_{V_\Delta^2}\leq 1}{\sup}\abs{\iint_{J\times\R^3}\overline{w}(t,x)\japbrak{\nabla}IHdxdt}\\
&\leq CN^{-1+\delta}\underset{\norm{w}_{V_\Delta^2}\leq1}{\sup}\,\underset{K\in2^\N}{\sup}\,\parent{K^{-1}\norm{P_K(\japbrak{\nabla}Iw)}_{L_t^\infty L_x^2(J)}}\parent{Z_I(J)^2+F_2^\omega(J)2}\parent{Z_I(J)+F_\infty^\omega(J)}
\\
&\leq CN^{-1+\delta}\parent{Z_I(J)^2+F_2^\omega(J)2}\parent{Z_I(J)+F_\infty^\omega(J)} \,.
\end{split}
\end{equation*}
Next, we take $\gamma>0$ arbitrarilly small, and we fix 
\[
\epsilon_N = N^{-2(1-\sigma)-\gamma}\,.
\]
We have
\[
\operatorname{I}_1\leq CZ_I(J)\parentbig{N^{-\frac{2(1-\sigma)}{3}-\frac{\gamma}{3}}Z_I(J)^\frac{2}{3}+C_\alpha N^{-\frac{2(1-\sigma)}{3}-\frac{\gamma}{3}}N^\frac{1-\sigma}{2}Z_I(J)^{-\frac{1}{3}}+C_\alpha N^{-\frac{1-\sigma}{3}-\frac{\gamma}{3}}Z_I(J)^\frac{1}{3}}
\,.\]
If $Z_I(J)\sim N^{1-\sigma}$, then the term in the parenthesis on the right-hand-side of the above estimate is smaller than a negative power of $N$. Moreover, we have 
\[
\underset{\abs{J}\to 0}{\lim}Z_I(J)=0\,.
\]
Hence, we shall be able to apply a continuity argument in this case, and prove that $Z_I(J)$ is indeed of size $\sim N^{1-\sigma}$. Similarly, we have
\[
\operatorname{I}_2\leq CZ_I(J)\parent{C_\alpha^3N^\frac{1-\sigma}{2}Z_I(J)^{-1}+C_\alpha^2Z_I(J)^{-1}+\eta^2}\,.
\]
Here again, we shall be able use the continuity method. We emphasize that $\eta$ does not have to depend on $N$. The last contribution is estimated as follows: 
\[
\operatorname{I}_3\leq CN^{-1+\delta}\parent{Z_I(J)^2+N^{1-\sigma}}\parent{Z_I(J)+C_\alpha}\,.
\]
In this case, we can see that when $Z_I(J)\sim N^{1-\sigma}$, the parenthesis in the right-hand-side of the above estimate is bounded by a negative power of $N$ provided $1/2<\sigma$. Finally, we conclude by collecting the above contributions and by using the continuity method for $1/2<\sigma$, $N>N_0$ large enough and $\eta$ small enough. We stress out that $\eta$ and $N_0$ only depend on some universal constants and on $C_\alpha$, which is fixed throuhout the analysis. This concludes the proof of Lemma~\ref{lemma:Z}.
\end{proof}
\subsection{Modified energy increment}
In this section, we revisit in a probabilistic setting the analysis of modified-energy increment performed when using the I-method (see for instance~\cite{dodson-13,dodson19,ckstt-2004}). A similar strategy was developed in~\cite{fan-mendelson-20}. Here, we propose a different approach that strongly relies on the structure of the nonlinearity. Subsequently, we follow~\cite{oh-okamoto-19} and we include the probabilistic forcing temr in the potential part of the energy
\begin{equation*}
    \mathcal{E}(v(t))\coloneqq \frac{1}{2}\int_{\R^3}\abs{\nabla Iv(t,x)}^2dx+\frac{1}{4}\int_{\R^3}\abs{Iv+If}^4dx\,.
\end{equation*}
The idea is to benefit from the frequency cancellations appearing in $H$. More precisely, when we handle the increment of the energy, we aim to come up with terms under the form $wH$, for which we already obtained some spacetime bounds in Lemma~\ref{lemma:H}. In the next Propositon, we estimate the energy increments on some intervals where the $L_{t,x}^4$ norm of $Iv$ satisfies a smallness condition. 
\begin{proposition}
\label{prop:energy:loc-incr}
Let $t_0\in J^*$, such that 
\[
\mathcal{E}(v)(t_0)\leq N^{2(1-\sigma)}\,.
\]
Then, for all interval $J=\intervalcc{t_0}{t}\subset\intervalco{0}{T^*}$ where $Iv$ satisfies the smallness conditions~\eqref{eq:ass:4} and~\eqref{eq:ass:4f}, that lead to the modified local well-posedness statement of Lemma~\ref{lemma:Z}, there exists $C(\alpha)$ such that
\begin{equation}
    \label{eq:E:loc-incr}
    \int_{J}\absbig{\frac{d}{dt}\mathcal{E}(v)}dtdx\leq C N^{0+}N^{6(1-\sigma)-1}+CN^{1-\sigma}F_2^\omega(J)2+CN^{1-\sigma}F_2(J)\norm{Iv}_{L_{t,x}^4(J)}^2\,.
\end{equation}
\end{proposition}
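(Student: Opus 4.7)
The plan is to follow the strategy hinted at in the introduction: differentiate $\mathcal{E}(v)$ in time, organise the result into a ``commutator part'' (where the frequency cancellations of $H$ are available and Lemma~\ref{lemma:H} can be applied) and a ``purely perturbative part'' (where $\nabla If$ must absorb all the loss and the local smoothing bound $F_2^\omega$ is the key ingredient). First I would compute $\frac{d}{dt}\mathcal{E}(v)$ directly. Writing $Iu=Iv+If$, using $i\partial_tIv=-\Delta Iv+\mathcal{N}(Iu)+H$ for the kinetic part and the elementary differentiation for the potential $\tfrac14\int|Iu|^4$, the two contributions $-\Re\int\partial_tIv\,\overline{\Delta Iv}$ and $\Re\int\partial_tIu\,\overline{\mathcal{N}(Iu)}$ combine, after using the equation and the cancellations $\Im\int|\Delta Iv|^2=\Im\int|\mathcal{N}(Iu)|^2=0$, into
\[
\tfrac{d}{dt}\mathcal{E}(v)=\Im\int\bigl(\Delta Iv-\mathcal{N}(Iu)\bigr)\overline{H}\,dx+\Im\int\nabla If\cdot\overline{\nabla\mathcal{N}(Iu)}\,dx,
\]
which is the formula announced in the introduction (since $\partial_tIf=i\Delta If$, the last term comes after one spatial integration by parts).

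For the commutator part I would apply Lemma~\ref{lemma:H} separately with $w=\Delta Iv$ and $w=\mathcal{N}(Iu)$. For $w=\Delta Iv$, Bernstein gives $\sup_K K^{-1}\norm{P_K\Delta Iv}_{L_t^\infty L_x^2}\lesssim\norm{\nabla Iv}_{L_t^\infty L_x^2}\lesssim\sqrt{\mathcal{E}(v)}\lesssim N^{1-\sigma}$, while for $w=\mathcal{N}(Iu)$ one splits according to $K\geq1$ and $K<1$ and uses $\norm{\mathcal{N}(Iu)}_{L_t^\infty L_x^2}\lesssim\norm{Iu}_{L_t^\infty L_x^6}^3\lesssim N^{3(1-\sigma)}$ (the low frequencies being handled by the Bernstein gain $K^{3/2}$ and the mass/Sobolev bound on $\norm{Iu}_{L_x^3}$). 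Inserting $Z_I(J)\lesssim N^{1-\sigma}$ from Lemma~\ref{lemma:Z} and $F_2^\omega(J)\lesssim N^{(1-\sigma)/2}$, the dominant contribution is the one from $\mathcal{N}(Iu)$ and yields $N^{-1+\delta}\cdot N^{3(1-\sigma)}\cdot(N^{2(1-\sigma)}+N^{1-\sigma})\cdot(N^{1-\sigma}+C_\alpha)\lesssim N^{0+}N^{6(1-\sigma)-1}$, which is exactly the first term in the conclusion.

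For the perturbative part I would expand $\nabla\mathcal{N}(Iu)$ as the obvious trilinear sum and substitute $Iu=Iv+If$ in each of the three factors, leading to eight configurations of the integral $\int\nabla If\cdot(\text{three factors chosen among }\nabla Iv,\nabla If,Iv,If)$. Each configuration is then bounded by Hölder with the following dictionary: $\nabla If$ is placed in $L_t^2L_x^6$ or $L_t^2L_x^\infty$ (both controlled by $F_2^\omega$); $\nabla Iv\in L_t^\infty L_x^2$ of size $\sqrt{\mathcal{E}(v)}\lesssim N^{1-\sigma}$; $Iv$ is placed either in $L^4_{t,x}$ (small), or in $L_t^\infty L_x^3$ (bounded by $\norm{\nabla Iv}_{L^2}^{1/2}\norm{Iv}_{L^2}^{1/2}\lesssim N^{(1-\sigma)/2}$ via Gagliardo–Nirenberg and mass conservation from Lemma~\ref{lemma:mass}), or in $L_t^\infty L_x^2$ (mass); $If$ factors are placed in $L_t^\infty L_x^p$ for $p\in\{3,4,6\}$, controlled by $F_\infty^\omega$. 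The typical output is either $F_2^2\cdot N^{1-\sigma}$ (configurations dominated by $If$, where two $\nabla If$'s appear and $Iv$ is put in $L_t^\infty L_x^3$) or $F_2\cdot N^{1-\sigma}\cdot\norm{Iv}_{L^4_{t,x}}^2$ (configurations with two $Iv$'s placed in $L^4_{t,x}$), matching exactly the last two terms in the conclusion; the mixed configurations containing one $If$ and one $Iv$ factor in a Lebesgue norm are absorbed by AM--GM into the same two buckets (together with innocuous lower-order terms of size $N^{1-\sigma}F_2^\omega\cdot C_\alpha$ which are bounded by $N^{1-\sigma}(F_2^\omega)^2$ once $F_2^\omega\gtrsim1$, as holds in the regime $F_2^\omega\sim N^{(1-\sigma)/2}\gg1$).

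The main obstacle is the bookkeeping in the perturbative step: one needs to find, for each of the eight configurations, admissible Hölder exponents in space and time such that (i) the time exponents sum to $1$, (ii) the spatial exponents sum to $1$, and (iii) every factor appears with a norm actually controlled by the standing assumptions $F^\omega$, $F_\infty^\omega$, $F_2^\omega$, the modified energy bound $\mathcal{E}(v)\lesssim N^{2(1-\sigma)}$, the mass bound and the smallness of $\norm{Iv}_{L^4_{t,x}}$. The purely-$If$ cubic contributions (absent from $Iv$) are the most constrained because they must produce a power of $N^{1-\sigma}$ only from $F_2^\omega$ (via $(F_2^\omega)^2$) and from one $\norm{Iv}_{L_t^\infty L_x^3}^2\lesssim N^{1-\sigma}$, which is precisely what forces the choice of Hölder exponents described above.
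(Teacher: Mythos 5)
Your proposal is correct and follows essentially the same strategy as the paper: the identity $\frac{d}{dt}\mathcal{E}(v)=\im\int(\Delta Iv-\mathcal{N}(Iu))\overline{H}\,dx+\im\int\nabla If\cdot\overline{\nabla\mathcal{N}(Iu)}\,dx$, Lemma~\ref{lemma:H} for the commutator part, and a case-by-case H\"older analysis with $\nabla If$ in $L_t^2L_x^\infty$ for the purely perturbative part. You do achieve one genuine simplification in the commutator step: the paper applies Lemma~\ref{lemma:H} with $w=\partial_tIv=i\Delta Iv-iI\mathcal{N}(u)$ and must then estimate $\sup_K K^{-1}\norm{P_KI\mathcal{N}(u)}_{L_t^\infty L_x^2}$, which requires a case-by-case fractional Leibniz/Sobolev argument (since $u$ itself is rough one cannot simply drop $I$); by observing that the $-iH$ piece of $\partial_tIv$ contributes only the vanishing $\re(-i\int|H|^2)$, you reduce to $w=\mathcal{N}(Iu)$, which is bounded directly by $\norm{\mathcal{N}(Iu)}_{L_t^\infty L_x^2}=\norm{Iu}_{L_t^\infty L_x^6}^3\lesssim N^{3(1-\sigma)}$ via the Sobolev embedding $\dot H^1\hookrightarrow L^6$ and $F_\infty^\omega$. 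This is cleaner and gives the same power $N^{6(1-\sigma)-1+\delta}$. (Your parenthetical remark about ``low frequencies'' and about the purely-$If$ configuration needing a factor $\norm{Iv}_{L_x^3}$ is a little confused — since $K\in2^\N$ the factor $K^{-1}\le1$ automatically, and the all-$If$ term has no $Iv$ at all, but this does not affect the argument: that term is simply $\lesssim(F_2^\omega)^2\le N^{1-\sigma}(F_2^\omega)^2$.)
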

Observe that we gain a negative power of $N$ from the perturbative term that contains $H$.
However, as explained in the introduction, we keep  track of some sub-additive quantities when we estimate the other terms. This will be useful when we will sum over the intervals where we have the modified local well-posedness result of Lemma~\ref{lemma:Z}. Still, we are forced to lose some powers of $N$ when we estimates the terms where the gradient hits $If$ by $F_2$. This indicates that we have no hope to prevent the energy from extending $\bigo{1}$, even if we rescale the initial data at the begining of the analysis.
\begin{proof}
We deduce from~\eqref{eq:nls:I} that 
\begin{align*}
    \frac{d}{dt}E(Iv(\tau))&= \re\int_{\R^3}\partial_t(Iv)\parent{\mathcal{N}(Iu)-I\mathcal{N}(u)}dx+\re\int_{\R^3}\partial_t(If)\mathcal{N}(Iu)dx\\
    &=\re\int_{\R^3}\partial_t(Iv)Hdx+\-\re \int_{\R^3}i\Delta(If)\mathcal{N}(Iu)dx\,.
\end{align*}
Consequently,
\begin{equation}
\int_J\absbig{\frac{d}{dt}E(Iv(\tau))}d\tau\leq C \underbrace{\int_J\absbig{\int_{\R^3}\partial_t(Iv)\overline{H}dx}dt}_{\operatorname{I}}+\underbrace{\norm{\Delta (If)\mathcal{N}(Iu)}_{L_{t,x}^1(J)}}_{\operatorname{II}}\,.
\end{equation}
\paragraph{\textbf{Estimate of term I}} 
In light of Lemma~\ref{lemma:H}, it suffices to prove that
\begin{equation}
\label{eq:dtIu}
\underset{K\in2^\N}{\sup}\parent{K^{-1}\norm{(\partial_tIv}_{L_t^\infty L_x^2}}\leq N^{3(1-\sigma)}\,.
\end{equation}
Indeed, we have
\[
Z_I(J)\leq CN^{1-\sigma}\,,\quad F_\infty^\omega(\R)\leq C_\alpha\,,\quad F_2(J)\leq C_\alpha N^{\frac{1-\sigma}{2}-\gamma}\,.
\]
The first estimate on $Z_I(J)$ follows from the assumptions~\eqref{eq:ass:4},~\eqref{eq:ass:4f}, and Lemma~\ref{lemma:Z}. Subsequently, we use Lemma~\ref{lemma:H} to get that
\begin{multline*}
\int_J\absbig{\int_{\R^3} \partial_t(Iu)\overline{H} dx}dt \\
\leq C N^{-1+\delta}\underset{K\in 2^\N}{\sup}\parent{K^{-1}\norm{P_K\parent{\partial_t(Iu)}}_{L_t^\infty L_x^2(J)}}\parent{Z_I(J)^2+F_2^\omega(J)2}\parent{Z_I(J)+F_\infty^\omega(J)}\\
\leq C(C_\alpha)N^{-1+\delta}\underset{K\in 2^\N}{\sup}\parent{K^{-1}\norm{P_K\parent{\partial_t(Iu)}}_{L_t^\infty L_x^2(J)}}N^{3(1-\sigma)}\,.
\end{multline*}
Let us turn to the proof of~\eqref{eq:dtIu}, and fix $K\in2^\N$. Recall that $Iv$ is solution to
\[\partial_tIv = i\Delta Iv - iI\mathcal{N}(u)\,.\]
First, we have from the definition of $Z_I(J)$ and form~\eqref{eq:Z:apriori} that
\[\norm{P_K\Delta Iv}_{L_t^\infty L_x^2(J)}\leq CKZ_I(J)\leq CKN^{1-\sigma}\,.\]
It remains to handle the nonlinear term $ I\mathcal{N}(u)\sim I\mathcal{N}(u^{(1)},u^{(2)},u^{(3)})$ with $u^{(i)}\in\set{v,f}$. For this purpose, we shall lose some derivatives and apply the fractional Leibniz rule with the operator $\nabla I$, especially when all the three terms are of type $v$ since we only have some controls on norms of $\nabla Iv$, and not on $\nabla v$.  One way of loosing some derivatives is to use Sobolev embedding. We eventually prove that 
\begin{equation}
\label{eq:Nw}
\norm{P_KI\mathcal{N}(u)}_{L_t^\infty L_x^2(J)}\leq C K\parent{ Z_I(J)^3+F_{\infty}(J)^3}\,.
\end{equation}
In what follows, all the spacetime norms will be taken over $J\times\R^3$. First, we observe that 
\[
\norm{v}_{L_t^\infty L_x^3}\leq C \norm{\japbrak{\nabla}^{1/2}v}_{L_t^\infty L_x^2}\leq C \norm{\japbrak{\nabla}Iv}_{L_t^\infty L_x^2}\leq CZ_{I}(J)\,.
\]
This follows from the Sobolev embedding and from the definition of the I-operator (see~\eqref{eq:def:I}). Next, we perform a case by case analysis depending on the type of terms that occur in the trilinear form. 

\paragraph{\textbf{Case 1: Three random terms}} In this case, we use Hölder inequality and the probabilistic estimates~\eqref{eq:ass:data} to get 
    \[
    \norm{P_KI(f^3)}_{L_t^\infty L_x^2(J)}\leq \norm{f}_{L_t^\infty L_x^6(J)}^3\leq F_{\infty}(J)^3\leq C_\alpha^3\,.
    \]
\paragraph{\textbf{Case 2: Two random terms}} In this case, we use Sobolev embedding, Hölder's inequality and the almost conservation of the mass to obtain 
\[
\norm{P_KI(f^2v)}_{L_t^\infty L_x^2}\leq C \norm{\japbrak{\nabla}P_KI(f^2v)}_{L_t^\infty L_x^{6/5}}
\leq C K\norm{f}_{L_t^\infty L_x^6}^2\norm{v}_{L_t^\infty L_x^2}
\leq C KF_{\infty}(J)^2\norm{f_0}_{L_x^2}
\,.
\]
\paragraph{\textbf{Case 3: One random term}}  Similarly, 
\[\norm{P_KI(fv^2)}_{L_t^\infty L_x^2}\leq C \norm{\japbrak{\nabla}P_KI(fv^2)}_{L_t^\infty L_x^{6/5}}
\leq C K\norm{f}_{L_t^\infty L_x^6}\norm{v}_{L_t^\infty L_x^3}\leq C KF_{\infty}(J)Z_I(J)^2\,.\]
\paragraph{\textbf{Case 4: Three deterministic terms}} Here, we need to use the Leibniz rule for $I\japbrak{\nabla}$. We obtain
\begin{equation*}
\begin{split}
\norm{P_KI(v^3)}_{L_t^\infty L_x^2}&\leq C \norm{\japbrak{\nabla}^{5/4}P_KI(v^3)}_{L_{t,x}^{12/11}}
\leq C K\norm{\japbrak{\nabla}^{1/4}P_KI(v^3)}_{L_{t,x}^{12/11}}
\\&\leq C K\norm{\japbrak{\nabla}^{1/4}Iv}_{L_t^\infty L_x^4}\norm{v}_{L_t^\infty L_x^3}^2
\leq C K\norm{\japbrak{\nabla}Iv}_{L_t^\infty L_x^2}\norm{v}_{L_t^\infty L_x^3}^2\leq CK Z_I(J)^3\,.
\end{split}
\end{equation*}
This addresses the contribution for the term $\operatorname{I}$ in the energy increment.
\paragraph{\textbf{Estimate of term II}}
After integrating by parts and applying the Leibniz rule, we write 
\begin{equation}
\label{eq:incr-II}
\norm{\Delta (If)\mathcal{N}(Iu)}_{L_{t,x}^1(J)}\leq C\iint_{J\times\R^3}\abs{\nabla(If)\mathcal{N}\parent{\nabla Iu^{(1)},Iu^{(2)},Iu^{(3)}}}dtdx
\end{equation}
with $u^{(i)}\in\set{f,v}$ for $i\in\set{1,2,3}$. Next, we perform a case by case analysis to estimate the contribution of the different terms on the right-hand side of~\eqref{eq:incr-II}. 
\paragraph{\textbf{No random term.}} In this case, we have $u^{(i)}=v$ for $i\in\set{1,2,3}$. We apply Cauchy-Schwarz inequality to obtain
\[
~\eqref{eq:incr-II} \leq C \norm{\nabla If}_{L_t^2L_x^\infty(J)}\norm{\nabla Iv}_{L_t^\infty L_x^2(J)}\norm{Iv}_{L_{t,x}^4}^2
\leq C F_2(J)Z_I(J)\norm{Iv}_{L_{t,x}^4(J)}^2\,.
\]
To handle the other cases, we apply Cauchy-Schwarz and we use the estimate~\eqref{eq:ass:data:2}.
\paragraph{\textbf{One random term.}}
\begin{enumerate}[]
\item \textbf{Case 1:} $u^{(1)}=f,u^{(2)}=u^{(3)}=v$. It follows from the almost conservation from Lemma~\eqref{lemma:mass} that
\[
~\eqref{eq:incr-II} \leq C \norm{\nabla If}_{L_t^2L_x^\infty(J)}^2\norm{Iv}_{L_t^\infty L_x^2}^2
\leq C F_2^\omega(J)2\,.
\]
\item\textbf{Case 2:} $u^{(1)}=u^{(2)}=v,u^{(3)}=f$. Similarly,
\[
~\eqref{eq:incr-II}\leq C \norm{\nabla If}_{L_t^2L_x^\infty(J)}\norm{ If}_{L_t^2L_x^\infty(J)}\norm{\nabla Iv}_{L_t^\infty L_x^2(J)}\norm{ Iv}_{L_t^\infty L_x^2(J)}
\leq CF_2^\omega(J)2Z_I(J)\,.
\]
\end{enumerate}
\paragraph{\textbf{Two random terms.}}
\begin{enumerate}[]
\item \textbf{Case 1:} $u^{(1)}=u^{(2)}=f,u^{(3)}=v$. 
We deduce from Cauchy-Schwarz and from the almost conservation of the mass that ~\ref{lemma:mass} that 
\[
~\eqref{eq:incr-II}\leq \norm{\nabla If}_{L_t^2L_x^\infty(J)}^2\norm{ If}_{L_t^\infty L_x^2(J)}\norm{ Iv}_{L_t^\infty L_x^2(J)}
        \leq C F_2^\omega(J)2\,.
\]
\item \textbf{Case 2:} $u^{(1)}=v, u^{(2)}=u^{(3)}=f$, $u^{(2)}=u^{(3)}=f$. In this case, we have
\begin{multline*}
~\eqref{eq:incr-II} \leq C \norm{\nabla If}_{L_t^2L_x^\infty(J)}\norm{\nabla Iv}_{L_t^\infty L_x^2(J)}\norm{ If}_{L_t^\infty L_x^2(J)}\norm{ Iv}_{L_t^\infty L_x^2(J)}\\
\leq C F_2(J)\norm{ If}_{L_t^\infty L_x^2(J)}Z_I(J)\leq C F_2(J)Z_I(J)\,.
\end{multline*}
\end{enumerate}
\paragraph{\textbf{Three random terms.}} Here, we have $u^{(i)}=f$ for $i\in\set{1,2,3}$, so that
\[
~\eqref{eq:incr-II} \leq C\norm{\nabla If}_{L_t^2L_x^\infty(J)}^2\norm{If}_{L_t^\infty L_x^2(J)}^2
\leq C F_2^\omega(J)2\,.
\]
Collecting all the contributions, we obtain the desired estimate on the modified energy increment.
\end{proof}

\subsection{Modified interaction Morawetz}
We revisit the standard interaction Morawetz inequality in a perturbed setting, for the equation~\eqref{eq:nls:I} satisfied by $Iu$. First, we recall the formal computations that leads to the so-called Lin-Strauss inequality. Given a convex weight $a$, we define the action~\footnote{\ we sum over the indices, and $\partial_j f = \sum_{j=1}^3\partial_{x_j}f$.} 
\[
M_0(t)=2\im\int_{\R^3}\partial_ja(x)(\partial_j Iu)(x)\overline{Iu}(x)dx = 2\im\int_{\R^3}\partial_jaT_{0,j}dx\,.
\]
It corresponds to the contraction of the current density $T_{0,j}=T_{j,0}=2\im\parent{\partial_j(Iu)\overline{Iu}}$ against the vector field  $\nabla a$. Thanks to the convexity of $a$, we hope to get some monotonicity for $M_0$. The general virial identity reads 
\[
\frac{d}{dt} M_y(t)=\int_{\R^3}-\Delta^2a\abs{Iu}^2+4\partial_{kj}^2a\re(\partial_k(\overline{Iu})\partial_j(Iu))+\Delta a \abs{Iu}^4+2\re\parent{2\partial_ja\partial_j(Iu)H+\Delta a(Iu)H}dx\,.
\]
Given a point $y\in\R^3$, the recentered Morawetz action, denoted $M_y$, corresponds to the particular weight $a_y(x)=\abs{x-y}$, that satisfies 
\[
\partial_ja = \frac{x_j-y_j}{\abs{x-y}},\quad \Delta a = \frac{2}{\abs{x-y}},\quad \Delta^2a=-2\pi\delta_y,\quad \partial_{jk}^2a = \operatorname{I} - \frac{x-y}{\abs{x-y}}\otimes\frac{x-y}{\abs{x-y}} \eqqcolon \operatorname{P}\,.
\]
Hence, 
\begin{multline}
\label{eq:morawetz:action}
    -\frac{d}{dt} M_y(t) = \pi\abs{Iu(t,y)}^2+2\int_{\R^3}\abs{\operatorname{P}\nabla Iu(x)}^2dx+\int_{\R^3}\frac{1}{\abs{x-y}}\abs{Iu(x)}^4dx\\
    +4\re\int\frac{x_j-y_j}{\abs{x-y}}\partial_j(Iu)\overline{H}dx+4\re\int_{\R^3}\frac{1}{\abs{x-y}}Iu\overline{H}dx\,.
\end{multline}
Observe the presence of two extra terms, that come from the presence of the forcing term $H$. Next, we average the recentered Morawetz action against the mass density $\abs{Iu(y)}^2$ to obtain the Morawetz interaction, 
\[
\int_{\R^3}M_y(t)\abs{Iu(y)}^2dy\,.
\]
Note that this quantity is bounded by the conserved $L^2$-norm times the critical $\dot{H}^{1/2}$-norm of $Iu$.
\begin{equation}
\label{eq:bound:morawetz}
\abs{\int_{\R^3}M_y(t)\abs{Iu(y)}^2dy}\leq C\norm{Iu(t)}_{L_x^2}^2\norm{Iu(t)}_{\dot{H}^{1/2}}^2\,.
\end{equation}
We can now state and prove the modified Morawetz interaction, that will provide a spacetime control on $Iu$.
\begin{proposition}[Modified interaction Morawetz]
Assume that $J\subset J^*$ is an interval such that $Z_I(J_k)<\infty$. Then, for any $\delta>0$, there exists $C>0$ such that for any partition $J=\bigcup_{k=0}^{L-1}J_k$, there holds
\label{prop:morawetz}
\begin{multline}
    \label{eq:modified:morawetz}
    \iint_{J\times\R^3}\abs{Iu}^4dtdx
    \leq C\norm{Iu}_{L^\infty \dot{H}^{1/2}(J)}^2\norm{Iu}_{L^\infty L^2(J)}^2\\
    +CN^{-1+\delta}\norm{Iu}_{L_t^\infty L_x^2(J)}\norm{Iu}_{L_t^\infty H^{1/2}(J)}^2\parentbig{\sum_{k=0}^{L-1}\parent{Z_I(J_k)^2+F_2(J_k)^2}\parent{Z_I(J_k)+F_{\infty}(J_k)}}\,.
\end{multline}
\end{proposition}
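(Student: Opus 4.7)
The plan is to revisit the standard interaction Morawetz computation for the cubic NLS in $\R^3$, adapted to the perturbed equation~\eqref{eq:nls:I}, and to control the extra error terms generated by the forcing $H$ via Lemma~\ref{lemma:H}. Set
\[
M^{\mathrm{int}}(t) \coloneqq \int_{\R^3} M_y(t)\,|Iu(t,y)|^2\,dy,
\]
with $M_y$ as in~\eqref{eq:morawetz:action}. Since $Iu$ solves~\eqref{eq:nls:I}, the density $\rho(t,y) \coloneqq |Iu(t,y)|^2$ obeys the perturbed continuity equation $\partial_t\rho+\nabla\cdot T_0=2\im(\overline{Iu}\,H)$, where $T_0=2\im(\overline{Iu}\,\nabla Iu)$.

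First, I would differentiate $M^{\mathrm{int}}$, substitute~\eqref{eq:morawetz:action} together with the continuity equation, and symmetrize in $(x,y)$ exactly as in the unperturbed proof; this should yield an identity of the form
\[
-\frac{d}{dt}M^{\mathrm{int}}(t) \;=\; \mathcal{P}(t) + \mathcal{E}(t),
\]
where $\mathcal{P}(t)\geq 0$ is a sum of manifestly non-negative contributions (in particular containing a positive multiple of $\int|Iu(t,x)|^4\,dx$), and $\mathcal{E}(t)$ collects the finitely many terms that are linear in $H$: two families stemming from the last two summands of~\eqref{eq:morawetz:action} tested against $\rho(y)$, and a third family coming from $M_y$ tested against $2\im(\overline{Iu(y)}H(y))$. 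Integrating in time and using~\eqref{eq:bound:morawetz} to bound the boundary contribution $[M^{\mathrm{int}}]_J$ produces the first term on the right-hand side of~\eqref{eq:modified:morawetz}.

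The core step is to estimate $\int_J|\mathcal{E}(t)|\,dt$. After performing the $y$-integration, each error term can be recast as $\int_{\R^3}w(t,x)\,\overline{H(t,x)}\,dx$, with $w$ an explicit expression built from $Iu(x)$, $\nabla Iu(x)$, and convolutions of $\rho$ with one of the kernels $\frac{x-y}{|x-y|}$, $\frac{1}{|x-y|}$, or $|x-y|$. To obtain the prefactor $\|Iu\|_{L^\infty L^2}\|Iu\|_{L^\infty H^{1/2}}^2$ rather than the rougher $\|Iu\|_{L^\infty L^2}^2\|Iu\|_{L^\infty\dot H^1}$ delivered by a naive Hölder estimate, I would combine the pointwise bound $\absb{\int\frac{x-y}{|x-y|}\rho(y)\,dy}\leq\|Iu\|_{L^2}^2$ with Hardy--Littlewood--Sobolev and Sobolev embedding. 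For instance, for the representative family $\iint\frac{x_j-y_j}{|x-y|}\partial_j Iu(x)\overline{H(x)}\rho(y)\,dy\,dx$, applying Hölder with $\frac{1}{2}=\frac{1}{3}+\frac{1}{6}$, HLS $\||x|^{-1}\ast\rho\|_{L_x^6}\lesssim\|\rho\|_{L_x^{6/5}}=\|Iu\|_{L_x^{12/5}}^2$, together with the interpolation-Sobolev bounds $\|Iu\|_{L_x^{12/5}}^2\lesssim\|Iu\|_{L_x^2}\|Iu\|_{\dot H_x^{1/2}}$ and $\|Iu\|_{L_x^3}\lesssim\|Iu\|_{\dot H_x^{1/2}}$, yields $\|w\|_{L_x^2}\lesssim\|Iu\|_{L_x^2}\|Iu\|_{\dot H_x^{1/2}}^2$, hence the negative-Besov estimate
\[
\sup_{K\in 2^\N}\bigl(K^{-1}\|P_Kw\|_{L_t^\infty L_x^2(J_k)}\bigr)\lesssim\|Iu\|_{L_t^\infty L_x^2(J_k)}\,\|Iu\|_{L_t^\infty H_x^{1/2}(J_k)}^2.
\]

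Lemma~\ref{lemma:H} (whose hypothesis $Z_I(J_k)<\infty$ is granted) then applies on each $J_k$, and summing the resulting bounds over $k=0,\dots,L-1$ produces the second term of~\eqref{eq:modified:morawetz}. The main obstacle is the careful bookkeeping underpinning the third paragraph: every one of the finitely many terms in $\mathcal{E}$ must be recast in the form $\int w\overline H\,dx$ with $w$ satisfying the above negative-Besov bound, and the Hölder exponents together with the HLS and Sobolev embeddings must be chosen delicately so as to interpolate the intermediate $\rho$- and $Iu$-norms precisely back to the target product $\|Iu\|_{L^2}\|Iu\|_{H^{1/2}}^2$.
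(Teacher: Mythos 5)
Your overall scheme is the same as the paper's: differentiate the interaction functional, keep the sign-definite terms, bound the boundary contribution by~\eqref{eq:bound:morawetz}, and recast every $H$-error as $\int w\overline{H}\,dx$ so that Lemma~\ref{lemma:H} applies on each $J_k$, with the whole burden resting on a bound of the form $\sup_K K^{-1}\norm{P_Kw}_{L_t^\infty L_x^2}\lesssim \norm{Iu}_{L^2}\norm{Iu}_{H^{1/2}}^2$. The gap is in the mechanism you offer for exactly the hardest such weight, namely the term $\iint \frac{x_j-y_j}{\abs{x-y}}\,\partial_j(Iu)(x)\,\overline{H}(x)\,\rho(y)\,dy\,dx$. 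Here the kernel $\frac{x-y}{\abs{x-y}}$ is bounded (it is not $\abs{x-y}^{-1}$, so the HLS estimate $\norm{\abs{x}^{-1}\ast\rho}_{L^6}\lesssim\norm{\rho}_{L^{6/5}}$ that you invoke is not the relevant object for this term), and more seriously the weight $w$ carries a full derivative $\partial_j Iu$: a plain bound $\norm{w}_{L_x^2}\lesssim\norm{Iu}_{L^2}\norm{Iu}_{\dot H^{1/2}}^2$ is impossible on regularity grounds, since the right-hand side contains only half a derivative of $Iu$. To get the claimed negative-Besov bound you must actually use the factor $K^{-1}$, and that forces you to move the derivative off $Iu$: by duality, write $\norm{P_K w}_{L^2}=\sup_{\norm{g}_{L^2}\leq1}\abs{\int \frac{x_j-y_j}{\abs{x-y}}\partial_j(Iu)\,\overline{P_Kg}\,dx}$, integrate by parts via $\frac{x-y}{\abs{x-y}}\cdot\nabla Iu=\operatorname{div}\bigl(\frac{x-y}{\abs{x-y}}Iu\bigr)-\frac{2}{\abs{x-y}}Iu$, so the derivative falls on $P_Kg$ (costing $K$) and the singular piece $\frac{1}{\abs{x-y}}P_Kg$ is controlled by Hardy's inequality (again costing $K$), uniformly in $y$. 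This yields $\norm{P_K\bigl(\frac{x_j-y_j}{\abs{x-y}}\partial_j Iu\bigr)}_{L^2}\lesssim K\norm{Iu}_{L^2}$, hence $\sup_K K^{-1}(\cdots)\lesssim\norm{Iu}_{L^2}$ and, after the $\rho(y)$-integration, an even stronger prefactor $\norm{Iu}_{L_t^\infty L_x^2}^3$ for this term; this duality--integration by parts--Hardy step is the ingredient your write-up skips, and it is precisely how the paper closes the estimate.

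For the remaining error families your alternative does work and is a legitimate variant: for the kernel $\frac{1}{\abs{x-y}}$ tested against $Iu(x)\overline{H}(x)$, the chain HLS plus $\norm{Iu}_{L^{12/5}}^2\lesssim\norm{Iu}_{L^2}\norm{Iu}_{\dot H^{1/2}}$ and $\norm{Iu}_{L^3}\lesssim\norm{Iu}_{\dot H^{1/2}}$ gives the required weight bound directly (the paper instead uses Hardy by duality there as well), and for the term generated by the perturbed continuity equation the standard Lin--Strauss-type bound $\sup_y\absb{\int\frac{x_j-y_j}{\abs{x-y}}T_{0,j}\,dx}\lesssim\norm{Iu}_{\dot H^{1/2}}^2$ suffices, as in the paper's treatment of $\operatorname{II}_1$. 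So the proposal is essentially the paper's proof with a correct outline, but the representative computation you present for the gradient term does not go through as written and must be replaced by the commutator/Hardy argument above.
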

\begin{proof}
By using the Leibniz rule, and then integrating over time, we obtain   
\begin{multline*}
\left[\int_{\R^3}M_y(t)\abs{Iu(y)}^2dy\right]_{t=t_0}^{t=t_1} = \underbrace{\iint_{J\times\R^3}\abs{Iu(t,y)}^2\parent{\frac{d}{dt}\int_{\R^3}\frac{x_j-y_j}{\abs{x-y}}T_{0,j}(x)dx}dydt}_{\operatorname{I}}\\
+\underbrace{\iiint_{J\times\R^3\times\R^3}\frac{x_j-y_j}{\abs{x-y}}T_{0,j}(x)2\re(\partial_tIu(y)\overline{Iu}(y))dxdydt}_{\operatorname{II}}\,.
\end{multline*}
Next, we plug~\eqref{eq:morawetz:action} into I and II. We have 
\begin{multline}
    \label{eq:morawetz:interactionI}
    \operatorname{I} = \pi\iint_{J\times\R^3}\abs{Iu(t,y)}^4dydt\\+2\iiint_{J\times\R^3\times\R^3}\abs{P_{x,y}\nabla u(t,x)}^2\abs{Iu(y)}^2dxdydt +\iiint_{J\times\R^3\times\R^3} \frac{2}{\abs{x-y}}\abs{Iu(x)}^4\abs{Iu(y)}^2dxdydt \\
    +\underbrace{4\re\iiint_{\R\times\R^3\times\R^3}\abs{Iu(y)}^2\frac{x_j-y_j}{\abs{x-y}}\partial_j(Iu)(x)\overline{H}(x)dxdydt}_{\operatorname{I}_1}\\
    +\underbrace{4\re\iint_{J\times\R^3\times\R^3}\frac{\abs{Iu(y)}^2}{\abs{x-y}}Iu(x)\overline{H(x)}dxdydt}_{\operatorname{I}_2}\,.
\end{multline}
To expand the term II we use equation~\eqref{eq:nls:I} and we write the time derivative of $Iu$ in terms of the perturbation $H$. 
\begin{multline}
    \label{eq:morawetz:interactionII}
    \operatorname{II} = -\iiint_{J\times\R^3\times\R^3}\frac{x_j-y_j}{\abs{x-y}}T_{0,j}(x)2\re(i\Delta Iu(y)\overline{Iu}(y))dxdydt\\
    +\underbrace{\iiint_{J\times\R^3\times\R^3}\frac{x_j-y_j}{\abs{x-y}}I_{0,j}(x)2\re(iIu(y)\overline{H}(y))dxdydt}_{\operatorname{II}_1}\,.
\end{multline}
Next, we observe that the terms which does not come from the perturbation have a sign. Hence, we can use the bound~\eqref{eq:bound:morawetz} and see that 
\[
\pi\iint_{J\times\R^3}\abs{Iu(t,y)}^4dydt\leq 2C\norm{Iu}_{L_t^\infty L_x^2(J)}^2\norm{Iu}_{L_t^\infty\dot{H}^{1/2}(J)}^2 + \abs{\operatorname{I}_1+\operatorname{I}_2+\operatorname{II}_1}\,.
\]
We are left to estimate three terms $\operatorname{I}_1\,, \operatorname{I}_2$ and $\operatorname{II}_1$, that contain a spacetime norm of the commutator $H$. Hence, we will use Lemma~\ref{lemma:H} to address these three terms. Up to a partition, and using sub-additivity, we may assume that $L=1$.

\textbf{Term} $\operatorname{I}_1$: We use the triangular inequality and Lemma~\ref{lemma:H} to get
\begin{multline*}
\abs{\operatorname{I}_1}\leq\norm{Iu}_{L_t^\infty L_y^2(J)}^2\int_J\underset{y}{\sup}\absbig{\int_{\R^3}\frac{x_j-y_j}{\abs{x-y}}\partial_j(Iu)(x)\overline{H}(x)dx}dt\\
\leq CN^{-1+\delta}\norm{Iu}_{L_t^\infty L_y^2(J)}^2\underset{y\in\R^3\,K\in2^\N}{\sup}\parent{K^{-1}{\norm{P_K\parent{\frac{x_j-y_j}{\abs{x-y}}\partial_j(Iu)(x)}}_{L_t^\infty L_x^2(J)}}}\\
\parent{Z_I(J)^2+F_2^\omega(J)2}\parent{Z_I(J)+F_{\infty}(J)}\,.
\end{multline*}
We fix $y\in\R^3$, $t\in J$ and $K\in2^\N$, and we proceed as follows.
\[
\norm{P_K\parent{\frac{x_j-y_j}{\abs{x-y}}\partial_j(Iu)(x)}}_{L_x^2} = \underset{\norm{g}_{L_2}\leq1}{\sup}\absbig{\int_{\R^3}\parent{\frac{x_j-y_j}{\abs{x-y}}\partial_j(Iu)(t,x)} \overline{P_Kg}(x) dx}
\]
We deduce from the Leibniz rule that for each $x\in\R^3$,
\[
\frac{x-y}{\abs{x-y}}\cdot \nabla \overline{Iu}(x) = \operatorname{div} \parent{\frac{x-y}{\abs{x-y}}\overline{Iu}(x)} - \frac{2}{\abs{x-y}}\overline{Iu}(x)\,.
\]
Next, we apply Hardy's inequality to handle the singular term with $\abs{x-y}^{-1}$, and we obtain
\begin{equation*}
\begin{split}
\norm{P_K\parent{\frac{x_j-y_j}{\abs{x-y}}\partial_j(Iu)(x)}}_{L_x^2} &\leq 
\underset{\norm{g}_{L_2}\leq1}{\sup}\absbig{\int_{\R^3}\frac{x-y}{\abs{x-y}}\cdot Iu(t,x)\overline{\nabla P_Kg}(x)+ Iu(t,x)\frac{2}{\abs{x-y}}\overline{P_Kg}(x)dx} \\
&\leq C\norm{Iu}_{L_x^2} \underset{\norm{g}_{L_2}\leq1}{\sup}\parent{K\norm{g}_{L_x^2}+\norm{\frac{1}{\abs{x-y}}P_Kg(x)}_{L_x^2}} \\
&\leq CK\norm{Iu}_{L_x^2}\,.
\end{split}
\end{equation*}
This addresses the estimate for the term $\operatorname{I}_1$: 
\[
\abs{\operatorname{I}_1}\leq CN^{-1+\delta}\norm{Iu}_{L_t^\infty L_x^2(J)}^3 \parent{Z_I(J)^2+F_2^\omega(J)2}\parent{Z_I(J)+F_{\infty}(J)}\,.
\]
\textbf{Term} $\operatorname{I}_2$: We proceed similarly. In this case, we are left to estimate 
\

\
\begin{equation*}
\begin{split}
\norm{P_K\parent{\frac{Iu(x)}{\abs{x-y}}}}_{L_x^2(J)} &= \underset{\norm{g}_{L_2}\leq1}{\sup}\absbig{\int_{\R^3}Iu(t,x)\frac{1}{\abs{x-y}}\overline{P_Kg}(x) dx}\\
&\leq \norm{Iu}_{L_x^2}\norm{\frac{1}{\abs{x-y}}g}_{L_x^2} \\
&\leq CK\norm{Iu}_{L_x^2}\norm{g}_{L_x^2}\,,
\end{split}
\end{equation*}
where we used Hardy's inequality in the last line. Hence, 
\[
\abs{\operatorname{I}_2}\leq CN^{-1+\delta}\norm{Iu}_{L_t^\infty L_x^2(J)}^3 \parent{Z_I(J)^2+F_2^\omega(J)2}\parent{Z_I(J)+F_{\infty}(J)}\,.
\]
\textbf{Term} $\operatorname{II}_1$:
    \begin{equation*}
        \begin{split} \abs{\operatorname{II_1}}&\leq \underset{t,y}{\sup}\absbig{\int_{\R^3}\frac{x_j-y_j}{\abs{x-y}}I_{0,j}(x)dx}\int_J\absbig{\int_{\R^3}Iu(y)H(y)dy}dt\\
            &\leq CN^{-1+\delta}\norm{Iu}_{L_t^\infty \cdot{H}^{1/2}(J)}^2\norm{Iu}_{L_t^\infty L_x^2(J)}\parent{Z_I(J)^2+F_2^\omega(J)2}\parent{Z_I(J)+F_\infty^\omega(J)}\,.
        \end{split}
    \end{equation*}
This concludes the proof of Proposition~\ref{prop:morawetz}.
\end{proof}
\section{Almost-sure global well-posedness and scattering}
In this section, we glue together the pieces collected in the previous sections to prove the main theorem. First, we take $\omega\in\Omega_\alpha$ such that
\[
F^\omega(\R)+F_\infty^\omega(\R)\leq C_\alpha\,.
\]
 Then, we apply the local probabilistic Cauchy theory developed in Section~\ref{section:lwp}. For such an $\omega$, we have a local solution $u = \e^{it\Delta}f_0^\omega+v$ to~\eqref{eq:nls} with initial data $u_{\lvert t=0}=f_0^\omega$, and $v$ satisfies~\eqref{eq:nls:f} with zero initial condition up to a maximal lifetime $T^*$. Next, assuming that $f_0$ is radially-symmetric, and taking $\omega\in\widetilde{\Omega}_\alpha$, we obtain in this section a uniform a priori estimates on $\norm{v(t)}_{H^\sigma}$, and we deduce from the stability theory from Section~\ref{sec:stability} that $T^*=+\infty$, and that $v$ scatters at infinity. For this purpose, we perform a double bootstrap argument with a modified interaction Morawetz, in order to establish global estimates on the modified energy $\mathcal{E}(v)$. 
\subsection{Double bootstrap argument}
\label{sec:double-bootstrap}
Recall that we fixed $\alpha>0$, $N_0(\alpha)$ large enough and $\widetilde{\Omega}_\alpha$ with $\mathbb{P}(\Omega_\alpha\setminus\Omega_\alpha)\leq\alpha$, such that for every $\omega\in\Omega_\alpha$ and $N\geq N_0(\alpha)$, there holds~\eqref{eq:ass:data:2}. Furthermore, we can assume from the large deviation estimate on the $L_x^p$ norm  of the randomized initial data (see~\cite{benyi2015-local}, Lemma 2.3), and from the fact that the I-operator commutes with the randomization and that it is bounded on $L^2$, that for all $\omega\in\widetilde{\Omega}_\alpha$ and all $N$,
\begin{equation}
\label{eq:ass:L4}
\norm{If_0^\omega}_{L_x^4}\leq C_\alpha\,.
\end{equation}

Then, we write $C$ the constant that appears in the modified Morawetz inequality~\eqref{eq:modified:morawetz}, and we define
\begin{equation}
    \label{eq:choice:M}
\mathcal{M}_0\coloneqq \norm{u}_{L^\infty L^2(\intervalco{0}{T^*})}\,,\quad M = 4C\mathcal{M}_0^3\,.
\end{equation}
It follows from the almost conservation of the mass from Lemma~\ref{lemma:mass} that $\mathcal{M}_0$ does only depend on $\norm{f_0}_{L_x^2}$. Afterwards, we consider the set
\[\Theta_N=\set{T\in\intervalco{0}{T^*}\mid \underset{t\in\intervalcc{0}{T}}{\sup} \mathcal{E}(v(t))\leq N^{2(1-\delta)},\ \norm{Iv}_{L_{t,x}^4\intervaloo{0}{T}}^4\leq MN^{1-\sigma}}.\]
We claim that for every $6/7<\sigma<2s\leq1$, there exists  $N_0'(\epsilon,\norm{f_0}_{L_x^2(\R^3)},s,\sigma)$ to be defined later such that for all $N\geq \max(N_0,N_0')$, we have 
\begin{equation}
\label{eq:thetaM}
    \Theta_N = \intervalco{0}{T^*}\,.
\end{equation} 
Before diving into the proof, we note that $\Theta_N$ is nonempty. Indeed, it follows from~\eqref{eq:ass:L4} and from $v(0)=0$ that $\mathcal{E}(v)(0)=\frac{1}{4}\norm{If_0^\omega}_{L_x^4}^4\leq C_\alpha \ll N^{2(1-\sigma)}$, and hence $T=0\in\Theta_N$. Moreover, we see from a continuity argument that it is also closed. Hence, it remains to prove that $\Theta_N$ is open. Let us fix $T\in\Theta_N$, with $T<T^*$. By the local well-posedness theory for~\eqref{eq:nls:f} presented in Proposition~\ref{prop:lwp}, there exists a small $T_2>T$ such that 
\begin{equation}
\label{eq:2M}
    \norm{Iv}_{L_{t,x}^4\intervaloo{0}{T+\delta}}^4\leq 2MN^{1-\sigma}\,.
\end{equation}
The double bootstrap argument is the following. First, by using the a priori estimate~\eqref{eq:2M}, by tuning the parameter and by summing over the spacetime slabs where
\[\disp\norm{Iv}_{L_{t,x}^4\intervaloo{T_1}{T}}^4\leq\epsilon_N\,,\] we observe from the local-in-time energy increment of Proposition~\ref{prop:energy:loc-incr} that if a time $T_1$ is such that $\mathcal{E}(v(T_1))\leq N^{2(1-\sigma)}$, where $\epsilon_N~\sim N^{-2(1-\sigma)}$ is as in Lemma~\ref{lemma:Z}, then the energy remains less than $N^{2(1-\sigma)}$ as long as we control the $L_{t,x}^4$ norm of $Iv$. Next, we deduce from the modified interaction Morawetz~\eqref{eq:modified:morawetz} that we eventually have~\eqref{eq:2M} with $MN^{1-\sigma}$ on the right-hand side, instead of $2MN^{1-\sigma}$.
\begin{proposition}[Double bootstrap argument]
\label{prop:E:global:increment}
Let $6/7<\sigma<2s$. There exists $\widetilde{N}_0(\epsilon,\sigma,s,\norm{f_0}_{L^2})$ such that for all $N\geq \max(N_0,\widetilde{N}_0)$, if $T_1$ is in $\Theta_N$ and $T_1<T_2<T^*$ is such that 
\[
\norm{Iv}_{L_{t,x}^4(0,T_2)}^4\leq 2MN^{1-\sigma}\,,
\]
then $T_2$ is also in $\Theta_N$.
\end{proposition}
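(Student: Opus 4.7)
The plan is a double bootstrap on $[0,T_2]$: first, use the improved $L^4_{t,x}$-control $\norm{Iv}_{L^4_{t,x}(0,T_2)}^4 \leq 2MN^{1-\sigma}$ to upgrade the modified-energy bound via Proposition~\ref{prop:energy:loc-incr}; then, feed the upgraded energy bound into the modified interaction Morawetz inequality (Proposition~\ref{prop:morawetz}) to refine the $L^4_{t,x}$-bound from $2MN^{1-\sigma}$ back to $MN^{1-\sigma}$.

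For the energy step, we partition $[0,T_2]$ into disjoint intervals $\{J_k\}_{k=1}^L$ on which both smallness conditions of Lemma~\ref{lemma:Z} hold, namely $\norm{Iv}_{L^4_{t,x}(J_k)}^4 \leq \epsilon_N = N^{-2(1-\sigma)-\delta}$ and $\norm{f}_{L^4_tL^3_x(J_k)} \leq \eta$. The assumption combined with the global Strichartz bound $\norm{f}_{L^4_tL^3_x(\R)} \leq C\norm{f_0}_{L^2}$ yields $L \lesssim N^{3(1-\sigma)+\delta}$. Starting from $\mathcal{E}(v(0)) \leq C_\alpha^4$, I argue by induction on $k$ that $\mathcal{E}(v(t_k)) \leq N^{2(1-\sigma)}$: the inductive hypothesis allows us to apply Lemma~\ref{lemma:Z} on $J_k$ to get $Z_I(J_k) \leq CN^{1-\sigma}$, after which Proposition~\ref{prop:energy:loc-incr} furnishes three contributions. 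Summing the first one over the $L$ slabs yields $L \cdot N^{0+}N^{6(1-\sigma)-1} \lesssim N^{9(1-\sigma)-1+2\delta}$, which lies below $N^{2(1-\sigma)}$ precisely when $\sigma > 6/7$. The second and third contributions involve $F_2(J_k)^2$, which is additive in $k$; using $F_2(\R)^2 \leq C_\alpha^2 N^{1-\sigma-2\gamma}$ directly for the second term and Cauchy--Schwarz against the total $L^4$-mass for the third term, both sums are $O(N^{2(1-\sigma)-\gamma})$. Adding $\mathcal{E}(v(0))$ and the three sums therefore stays strictly below $N^{2(1-\sigma)}$ for $N$ large, completing the induction.

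For the Morawetz step, I apply Proposition~\ref{prop:morawetz} on $[0,T_2]$ with the same partition. The deterministic piece $C\norm{Iu}_{L^\infty\dot{H}^{1/2}}^2\norm{Iu}_{L^\infty L^2}^2$ is handled by interpolation $\norm{Iu}_{\dot{H}^{1/2}}^2 \leq \norm{Iu}_{L^2}\norm{Iu}_{\dot{H}^1}$: the mass is bounded by $\mathcal{M}_0$, while $\norm{Iv}_{\dot{H}^1} \leq \sqrt{2\mathcal{E}(v)} \leq \sqrt{2}N^{1-\sigma}$ from step 1, and a direct Plancherel argument on the multiplier $m$ gives $\norm{If_0^\omega}_{\dot{H}^{1/2}} \lesssim C_\alpha N^{(1-2s)/2}$ which is dominated by $N^{(1-\sigma)/2}$ since $\sigma < 2s$. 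Hence this piece is $\leq C\mathcal{M}_0^3 N^{1-\sigma}$, absorbed into $\tfrac{M}{32}N^{1-\sigma}$ by the definition~\eqref{eq:choice:M}. For the perturbative sum, the slab-wise bound $Z_I(J_k) \leq CN^{1-\sigma}$ yields $\sum_k Z_I(J_k)^3 \lesssim L\cdot N^{3(1-\sigma)} \lesssim N^{6(1-\sigma)+\delta}$, which dominates the $F_2$-contributions (these are only $O(N^{2(1-\sigma)})$ thanks to additivity). Multiplying by $N^{-1+\delta}\cdot\mathcal{M}_0\cdot \mathcal{M}_0 N^{1-\sigma}$ gives $\lesssim N^{7(1-\sigma)-1+2\delta}$, once more absorbed into $\tfrac{M}{32}N^{1-\sigma}$ using $\sigma > 6/7$ (here the tighter threshold $\sigma > 5/6$ would already suffice). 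This produces $\norm{Iu}_{L^4_{t,x}(0,T_2)}^4 \leq \tfrac{M}{16}N^{1-\sigma}$, and the triangle inequality together with $\norm{If}_{L^4_{t,x}(\R)}^4 \leq C_\alpha^4 \ll MN^{1-\sigma}$ yields the target bound on $\norm{Iv}_{L^4_{t,x}(0,T_2)}^4$.

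The main obstacle is the bookkeeping of the powers of $N$. The binding constraint is the energy step: the $N^{6(1-\sigma)-1}$ contribution in Proposition~\ref{prop:energy:loc-incr} is constant on each slab and must be multiplied by $L \sim N^{3(1-\sigma)}$, yielding $N^{9(1-\sigma)-1}$, and demanding this be $\leq N^{2(1-\sigma)}$ is exactly the threshold $\sigma \geq 6/7$. The key structural point that saves us is that the $F_2$-quantities enter through an $L^2_t$-norm and are therefore genuinely sub-additive in the partition, whereas the $Z_I$-quantities can only be summed by the brute slab-wise bound, thereby picking up the factor of $L$ that governs the bootstrap. The remaining technical subtlety is the estimate on $\norm{If_0^\omega}_{\dot{H}^{1/2}}$, which needs $s \geq \sigma - 1/2$ (guaranteed by $\sigma < 2s$ and $\sigma < 1$) in order to perform the Plancherel splitting above and below the frequency $N$.
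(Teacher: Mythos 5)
Your proof is correct and follows essentially the same route as the paper: partition $[0,T_2]$ into $L\sim N^{3(1-\sigma)+\delta}$ slabs where Lemma~\ref{lemma:Z} applies, close the energy bound by induction using Proposition~\ref{prop:energy:loc-incr} with the brute $L\cdot N^{6(1-\sigma)-1}$ factor for the commutator term and sub-additivity/Cauchy--Schwarz for the $F_2$ terms, then feed the energy bound into Proposition~\ref{prop:morawetz} via the $\dot H^{1/2}$ interpolation and the $N^{1-2s}$ bound on $\norm{If_0}_{\dot H^{1/2}}^2$. Your identification of the binding threshold $\sigma>6/7$ from the energy step (with $5/6$ sufficing for the Morawetz step) and of $F_2$-sub-additivity as the saving mechanism both match the paper's argument exactly; you also correctly use $\epsilon_N=N^{-2(1-\sigma)-\delta}$ where the paper's stated $N^{-4(1-\sigma)-\delta}$ is a typographical slip inconsistent with its own $L=2MN^{3(1-\sigma)+\delta}$.
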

\begin{proof}
Given a fixed $N\geq N_0$, we consider $\epsilon_N= N^{-4(1-\sigma)-\delta}$ as in Lemma~\ref{lemma:Z}, where $\delta>0$ is arbitrarily small. As a consequence of the assumption $\norm{Iv}_{L_{t,x}^4\intervalcc{0}{T_2}}\leq 2MN^{1-\sigma}$, there are $L=2MN^{1-\sigma}\epsilon_N^{-1}$ spacetime-slabs on which the smallness condition $\norm{Iv}_{L_{t,x}^4}^4\leq \epsilon_N$ is satisfied. We decompose therefore $\intervalco{0}{T_2}$ into $L$ intervals $J_k=\intervalco{t_k}{t_{k+1}}$ such that for $\disp k\in\set{0,\dots,L-1}$
\[\norm{Iv}_{L_{t,x}^4(J_k)}^4\leq\epsilon_N\,,\]
with 
\[0=t_0<t_1<\dots<t_L =T_2,\quad \text{and}\ L=\frac{2MN^{1-\sigma}}{\epsilon_N}=2MN^{3(1-\sigma)+\delta}\,.\]
Up to some extra decomposition of these intervals into a finite number of intervals that does only depend on $\norm{f_0}_{H^s}$ and on $\eta$, a universal constant that controls the  $L_t^4L_x^3$ norm of $f$ on $J_k$ used in the modified well-posedness result from Lemma~\ref{lemma:Z}. Hence, the number of spacetime-slabs on which we have all the smallness condition we need is bounded from above by
\[
L\leq C(f_0,\eta)\frac{2M}{\epsilon_N}\leq C(f_0,\eta)2MN^{3(1-\sigma)+\delta}\,.
\]
Now, by iteration on $k$, let us deduce from Proposition~\ref{prop:energy:loc-incr} that 
\begin{equation}
\label{eq:bootstrap:E}
    \underset{t\in\intervalcc{0}{t_k}}{\sup}\mathcal{E}(v(t))\leq N^{2(1-\sigma)}\,.
\end{equation}
We note from the definition of $\Theta_N$ that~\eqref{eq:bootstrap:E} is already known when $t_k\leq T_1$, since we assumed that $T_1\in\Theta_N$. After that, we consider $k$ such that~\eqref{eq:bootstrap:E} holds for $t_k$. We prove that it still holds for any $t\in J_k$. To do so, we fix $t\in J_k$ and we use sub-additivity:
\[
\mathcal{E}(v(t))\leq \mathcal{E}(v(0)))+\int_0^{t_{k+1}}\absbig{\frac{d}{dt}\mathcal{E}(Iv(\tau))}d\tau= \mathcal{E}(v(0))) + \sum_{0\leq i\leq k-1}\int_{J_i}\absbig{\frac{d}{dt}\mathcal{E}(v(\tau))}d\tau\,.
\]
Initially, we know from~\eqref{eq:ass:L4} that $\mathcal{E}(v(0))\leq C_\alpha$. Moreover, we have by construction that $\mathcal{E}(Iv(t_i))\leq N^{2(1-\sigma)}$, and $\norm{Iv}_{L_{t,x}^4(J_i)}^4\leq\epsilon_N$ for each $i\in\set{0,\dots,k}$. Hence, we can apply the modified energy increment~\eqref{eq:E:loc-incr} on each interval $J_i$, and get
\[
\mathcal{E}(v(t))\leq C_\alpha + C_0\sum_{0\leq i\leq k-1}\parent{N^{6(1-\sigma)-1}+N^{1-\sigma}F_2(J_i)^2+N^{1-\sigma}F_2(J_i)\norm{Iv}_{L_{t,x}^4(J_i)}^2}\,,
\]
where $C_0=C_0(f_0,s,\sigma)$ is a universal constant that arises in~\eqref{eq:E:loc-incr}. Next, we brutally multiply the first term by the number of spacetime-slabs~\footnote{\ If we prove a long time Strichartz estimate as in~\cite{dodson19}, we might be able to use also some sub-additivity to estimate the first term, and to loosen therefore our restriction on $s$.}, and we use sub-additivity for the second term and the third term. When summing over the spacetime slabs, we also apply Cauchy-Schwarz on the third term, and we obtain
\begin{multline*}
     \mathcal{E}(v(t))\lesssim N^{\gamma}+LN^{6(1-\sigma)-1}\\
     +N^{1-\sigma}\biggl\{\sum_{0\leq i\leq k-1}F_{2}(J_j)^2+\biggl(\sum_{0\leq i\leq k-1}F_2(J_i)^2\biggr)^{1/2}\biggl(\sum_{0\leq i\leq k-1}\norm{Iv}_{L_{t,x}^4(J_j)}^4\biggr)^{1/2}\biggr\}\,.
\end{multline*}
We conclude by using the upper bound on $L$ and the estimate~\eqref{eq:ass:data:2} on $F_2$  that
\begin{equation*}
\begin{split}
\mathcal{E}(v(t))&\leq C_\alpha+C_0\parent{LN^{6(1-\sigma)-1}+N^{1-\sigma}\parent{F_2^\omega(\R)^2+F_2^\omega(\R)(2MN^{1-\sigma})^{1/2}}}\\
&\leq C_\alpha+ C_0\parent{2MN^{9(1-\sigma)-1+\delta}+N^{1-\sigma}\parent{N^{-\gamma}N^{1-\sigma}+ N^{0-}N^\frac{1-\sigma}{2}N^\frac{1-\sigma}{2}}}\\
&\leq N^{2(1-\sigma)}\parent{C_\alpha N^{-2(1-\sigma)}+2C_0MN^{7(1-\sigma)-1+\delta}+N^{-2\gamma}+N^{-\gamma}}\\
&\leq N^{2(1-\sigma)}\,,
\end{split}
\end{equation*}
provided that $\disp 6/7<\sigma$, and $N\geq N_0'=N_0'(C_0,\gamma,M)$. This finishes the first part of the proof of Proposition~\ref{prop:E:global:increment}. To show that $T_2\in\Theta_N$, it remains to prove that
\[
\iint_{\intervalco{0}{T_2}\times\R^3}\abs{Iv}^4dtdx \leq MN^{1-\sigma}\,.
\]
For this purpose, we perform the same analysis as in the first part of the proof to handle the remainders that contain the commutator $H$. Namely, we decompose $\intervalcc{0}{T_2}$ into $L$ intervals $\set{J_k}_{0\leq k\leq L-1}$ such that~\eqref{eq:ass:4} and ~\eqref{eq:ass:4f} holds on each $J_k$. Since we already know from Proposition~\eqref{prop:E:global:increment} that 
\[\underset{t\leq T_2}{\sup}\mathcal{E}(v(t))\leq N^{2(1-\sigma)}\,, \]
we can deduce from Lemma~\ref{lemma:Z} that for each $k$
\[Z_{N,J_k}(v)\leq CN^{1-\sigma}\,.\]
Therefore, it follows from the modified interaction Morawetz that ~\eqref{eq:modified:morawetz}
\begin{multline*}
\iint_{\intervalco{0}{T_2}\times\R^3}\abs{Iu}^4dtdx\leq C\parent{\norm{Iv}_{L_t^\infty(\intervalcc{0}{T_2}; \dot{H}^{1/2})}^2+\norm{If}_{L_t^\infty(\intervalcc{0}{T_2}; \dot{H}^{1/2})}^2}\norm{Iu}_{L_t^\infty(\intervalcc{0}{T_2}; L_x^2)}^2\\
+CN^{-1+\delta}L\norm{Iu}_{L_t^\infty L_x^2\intervalcc{0}{T_2}}\norm{Iu}_{L_t^\infty \dot{H}_x^\frac{1}{2}\intervalcc{0}{T_2}}^2\parent{N^{2(1-\sigma)}+C_\alpha^2N^{1-\alpha}}\parent{N^{1-\alpha}+C_\alpha}\,.
\end{multline*}
As a consequence of the global energy estimate on $\intervalcc{0}{T_2}$ shown in the first part of the proof, we deduce from interpolation and from the coercivity of $\mathcal{E}(v)$ that 
\begin{equation}
\label{eq:H1/2IV}
\norm{Iv}_{L_t^\infty\dot{H}^\frac{1}{2}\intervalcc{0}{T_2}}^2\leq\norm{Iv}_{L_t^\infty\dot{H}^{1}\intervalcc{0}{T_2}}\norm{Iv}_{L_t^\infty L_x^2\intervalcc{0}{T_2}}\leq \sqrt{2}\mathcal{E}(v)^\frac{1}{2}\mathcal{M}_0 \leq \sqrt{2}\mathcal{M}_0N^{1-\sigma}\,.
\end{equation}
Moreover, 
\begin{equation}
\label{eq:H1/2If}
\norm{If}_{L^\infty \dot{H}^{1/2}}^2\leq C N^{1-2s}\norm{f_0}_{H^s}^2\leq C(f_0)N^{0-} N^{1-\sigma}\,,
\end{equation}
for some irrelevant constant $C(f_0)=C(\norm{f_0}_{H^s},s,\sigma)$.
We deduce from the almost conservation of the mass, and from estimates~\eqref{eq:H1/2IV},~\eqref{eq:H1/2If} that
\begin{multline}
C\parent{\norm{Iv}_{L_t^\infty(\intervalcc{0}{T_2}; \dot{H}^{1/2})}^2+\norm{If}_{L_t^\infty(\intervalcc{0}{T_2}; \dot{H}^{1/2})}^2}\norm{Iu}_{L_t^\infty(\intervalcc{0}{T_2}; L_x^2)}^2\\
\leq
C\mathcal{M}_0^3\parent{\sqrt{2}N^{1-\sigma}+C(f_0)^{1-\sigma-\delta}}\leq 2C\mathcal{M}_0^3N^{1-\sigma}\leq \frac{M}{2}\mathcal{M}_0^3N^{1-\sigma}\,,
\end{multline}
provided that $6/7<\sigma$, and that $\delta$ is taken sufficiently small. We conclude that
\[
\iint_{\intervalco{0}{T_2}\times\R^3}\abs{Iu}^4dtdx\leq\frac{M}{2}N^{1-\sigma}+N^{1-\sigma}N^{0-}\parent{C_0+C_0'}\,.
\]
Moreover, using the bound~\footnote{We actually control $F$ defined with $f$ instead of $If$. But we can easily replace $f$ by $If$ in the proof, if we replace $f_0$ by $If_0$ using the fact $I$ is a Fourier multiplier and commutes therefore with the randomization. Moreover, the operator norm of $I$ acting on $L^2$ is less than 1. Hence, we do not lose any power of $N$.} on $F$ written in ~\eqref{eq:ass:data:def},
\begin{multline*}
\iint_{\intervalco{0}{T_2}\times\R^3}\abs{Iu}^4dtdx\lesssim\iint_{\intervalco{0}{T_2}\times\R^3}\abs{If}^4dtdx+\iint_{\intervalco{0}{T_2}\times\R^3}\abs{Iu}^4dtdx \\
\leq 4F_{\infty}^\omega(\intervaloo{0}{T_2})^4+\frac{M}{2}N^{1-\sigma}+4N^{0-}(C_0+C_0')N^{1-\sigma}\leq MN^{1-\sigma}\,.
\end{multline*}
In the last estimate, we took $N\geq N_0"=N_0"(f_0)$ large enough. This concludes the proof of Proposition~\ref{prop:E:global:increment} with $\widetilde{N}_0=\max(N_0',N_0")$.
\end{proof}
\subsection{Proof of Theorem~\ref{theorem:main}}
We proved that for every $\alpha>0$ and $\omega\in\Omega_\alpha$ as in Lemma~\ref{eq:ass:data:2}, the Cauchy problem~\eqref{eq:nls:f} admits a unique local solution $v$, with a maximal lifespan $\intervalco{0}{T^*}$. Moreover, we find a set $\widetilde{\Omega}_\alpha$ such that $\mathbb{P}(\Omega_\alpha\setminus\widetilde{\Omega}_\alpha)\leq\alpha$, where we proved in Proposition~\ref{prop:E:global:increment} by a double bootstrap argument on $\mathcal{E}(v)$ and a modified Morawetz estimate that if $\omega\in\widetilde{\Omega}_\alpha$, there exists $N_0=N_0(\epsilon,\norm{f_0}_{H^s},s,\sigma)$ such that for all $N\geq N_0$, we have $\Theta_N = \intervalco{0}{T^*}$. In particular, 
\[\underset{0\leq t\leq T^*}{\sup}\mathcal{E}(v(t))\leq N^{2(1-\sigma)}\,.\]
Hence, we deduce from the above estimate on the coercive modified energy, from the operator bound~\eqref{eq:norm:I} for the I-operator and from the almost conservation of the mass, that there exists $N$ large enough and a constant $C_N$ such that  
\[\underset{0\leq t\leq T^*}{\sup}\norm{v}_{H_x^\sigma}\leq \norm{v}_{L^\infty(\intervalcc{0}{T^*};L^2)}+ CN^{1-\sigma}\norm{Iv}_{L^\infty(\intervalcc{0}{T^*};\dot{H}^1)}\leq CN^{3(1-\sigma)}\eqqcolon C_N.\]
In what follow we drop the dependence on $N$. Since we have a global a priori estimate on the $H^\sigma$ norm of $v$, we can apply the stability theory presented in Section~\ref{section:lwp}, and deduce from Proposition~\ref{prop:uniform:scattering} that $T^*=+\infty$, and that $v$ scatters at infinity. Finally, we take
\[
\Sigma = \bigcap_{\epsilon>0}\widetilde{\Omega}_\alpha\,.
\]
Since $\mathbb{P}(\Omega\setminus\widetilde{\Omega}_\alpha)\leq2\alpha$ for every $\alpha>0$, we have that $\mathbb{P}(\Sigma)=1$ and, by construction, a unique global scattering solution to~\eqref{eq:nls} with initial data $u(0)=f_0^\omega\in H^s$ for all $\omega\in\Sigma$.
\printbibliography[
title={References}
]

@article{bourgain94,
  author  = "Bourgain, Jean",
  title   = "Periodic nonlinear Schrödinger equation and invariant measures",
  year    = "1994",
  journal = "Commun. Math. Phys.",
  volume  = "166",
  pages   = "1--26"
}

@article{bourgain96-gibbs,
 author = " Bourgain, Jean",
 title = "Invariant measures for the 2{D}-defocusing nonlinear {S}chr{\"o}dinger equation",
 journal = "Commun. Math. Phys.",
 issn = "0010-3616; 1432-0916/e",
 volume = "176",
 number = "2",
 pages = "421-445",
 year = "1996",
 publisher = "Springer, Berlin/Heidelberg",
}

@article{bourgain-98,
author = "Bourgain, Jean",
title = "Refinements of Strichartz' inequality and applications to 2D-NLS with critical nonlinearity",
Fjournal = "International Mathematics Research Notices",
journal = "Internat. Math. Res. Notices",
volume = "1998",
number = "5",
year = "1998",
pages = "253-283",
}

@article{bourgain-98-3D,
author = "Bourgain, Jean",
title = "Scattering in the energy space and below for 3D NLS ",
Fjournal = "Journal d’Analyse Math{\'e}matique ",
journal = "J. Anal. Math.",
year = "1998",
volume = "75",
pages = "267 - 297",
}

@article{benyi2015,
 author = " B{\'e}nyi, {\'A}rp{\'a}d and  Oh, Tadahiro and  Pocovnicu, Oana",
 Title = "On the probabilistic {C}auchy theory of the cubic nonlinear {S}chr{\"o}dinger equation on \(\mathbb {R}^d\), \(d \geq 3\)",
 FJournal = "{Trans. Am. Math. Soc., Ser B}",
 journal = "Trans. Am. Math. Soc., Ser. B",
 volume = "2",
 pages = "1-50",
 year = "2015",
 publisher = "American Mathematical Society (AMS), Providence, RI",
}

@inbook{benyi2015-local,
   author = " B\'enyi, \'Arp\'ad and  Oh, Tadahiro and  Pocovnicu, Oana",
   booktitle="Excursions in Harmonic Analysis, Volume 4: The February Fourier Talks at the Norbert Wiener Center",
   title = "Wiener randomization on unbounded domains and an application to almost sure well-posedness of {NLS}",
   publisher = "Springer International Publishing",
   year = "2015",
   pages = "3-25",
   isbn="978-3-319-20188-7",
   doi="10.1007/978-3-319-20188-7_1",
}

@inbook{benyi2019,
author = " B\'enyi, \'Arp\'ad and  Oh, Tadahiro and  Pocovnicu, Oana",
title = "On the Probabilistic {C}auchy Theory for Nonlinear Dispersive {PDE}s",
publisher = "Springer International Publishing",
year = "2019",
pages = "1-32",
booktitle = "Landscapes of Time-Frequency Analysis",
isbn="978-3-030-05210-2",
doi="10.1007/978-3-030-05210-2_1"
}

@article{benyi-oh-pocovnicu-2019,
title = "Higher order expansions for the probabilistic local Cauchy theory of the cubic nonlinear Schr{\"o}dinger equation on {$\mathbb{R}^3$}",
author = "{\'A}rp{\'a}d B{\'e}nyi and Tadahiro Oh and Oana Pocovnicu",
year = "2019",
doi = "10.1090/btran/29",
volume = "6",
pages = "114-160",
journal = "Trans. Amer. Math. Soc. Ser. B",
issn = "2330-0000",
}

@article{burq-thomann2020,
title="Almost sure scattering for the one dimensional nonlinear Schr{\"o}dinger equation", 
author=" Burq, Nicolas and  Thomann, Laurent",
year="2020",
journal = "arXiv e-prints, 2012.13571",
}

@article{burq-tzvetkov-2008I,
author=" Burq, Nicolas
and Tzvetkov, Nikolay",
title="Random data {C}auchy theory for supercritical wave equations {I}: local theory",
journal="Invent. Math.",
year="2008",
volume="173",
number="3",
pages="449-475",
publisher ="Springer",
issn="1432-1297",
doi="10.1007/s00222-008-0124-z",
}

@article{burq-tzvetkov-2008II,
author=" Burq, Nicolas
and Tzvetkov, Nikolay",
title="Random data {C}auchy theory for supercritical wave equations {II}: a global existence result",
journal="Invent. Math.",
year="2008",
volume="173",
number="3",
pages="477--496",
publisher = "Springer",
issn="1432-1297",
doi="10.1007/s00222-008-0123-0",
}

@article{christ-colliander-tao,
title = "Ill-posedness for nonlinear Schr{\"o}dinger and wave equations",
author = "Christ, Michael and Colliander, James and Tao, Terence",
journal = "arXiv-eprint, 0311048 ",
year = "2003",
% eprint = "0311048",
% archivePrefix ="arXiv",
% primaryClass="math.AP",
}

@article{ckstt-2004,
author = {Colliander, J. and Keel, M. and Staffilani, G. and Takaoka, H. and Tao, T.},
title = {Global existence and scattering for rough solutions of a nonlinear Schrödinger equation on {$\R^3$} },
journal = {Commun. Partial Differ. Equ.},
volume = {57},
number = {8},
pages = {987-1014},
% doi = {https://doi.org/10.1002/cpa.20029},
year = {2004}
}

@article{coifman-meyer-78,
author = "Meyer, Yves and Coifman, R.",
title = "Commutateurs d’int{\'e}grales singuli{\`e}eres et op{\'e}rateurs multilin{\'e}eaires",
journal = "Ann. Inst. Fourier (Grenoble)",
year = "1978",
volume = "28",
pages = "177-202",
}

@article{da-prato02,
author = "Da Prato, G. and Debussche, A",
title = "Two-dimensional Navier-Stokes equations driven by a space-time white
noise", 
journal = "J. Funct. Anal.",
volume = "196",
year = "2002",
number = " 1",
pages = "180-210",
}

@article{dodson-13,
 Author = {Benjamin {Dodson}},
 Title = {{Global well-posedness and scattering for the defocusing, cubic nonlinear Schr\"odinger equation when \(n = 3\) via a linear-nonlinear decomposition}},
 FJournal = {{Discrete and Continuous Dynamical Systems}},
 Journal = {{Discrete Contin. Dyn. Syst.}},
%  ISSN = {1078-0947; 0133-0189; 1553-5231/e},
 Volume = {33},
 Number = {5},
 Pages = {1905--1926},
 Year = {2013},
 Publisher = {American Institute of Mathematical Sciences (AIMS), Springfield, MO},
%  Language = {English},
%  MSC2010 = {35Q55},
%  Zbl = {1277.35316}
}

@article{colliander-oh-2012,
title = "Almost sure well-posedness of the cubic nonlinear Schrödinger equation below {$L^2(\mathbb{T})$}",
author = "Colliander, James and Oh, Tadahiro", 
year = "2012",
journal = "Duke Math. J.",
number = "3",
volume = "161",
page = "367–414", 
}

@article{local-smoothing,
author ="Constantin, P and Saut, J.-C.",
title = "Local smoothing properties of dispersive equations",
journal = "J. Amer. Math. Soc.",
year = "1988",
volume = "413-439.", 
volume = "1",
number = "2",
pages = "413–439.",
}

@article{dodson19,
 Author = {Benjamin {Dodson}},
 Title = {{Global well-posedness and scattering for nonlinear Schr\"odinger equations with algebraic nonlinearity when \(d = 2,3\) and \(u_0\) is radial}},
 FJournal = {{Cambridge Journal of Mathematics}},
 Journal = {{Camb. J. Math.}},
%  ISSN = {2168-0930; 2168-0949/e},
 Volume = {7},
 Number = {3},
 Pages = {283--318},
 Year = {2019},
 Publisher = {International Press of Boston, Somerville, MA},
%  Language = {English},
%  MSC2010 = {35Q55 35Q41 35A01 35A02 35P25},
%  Zbl = {1428.35502}
}

@Article{dodson-luhrmann-mendelson-19,
 Author = {Benjamin {Dodson} and Jonas {L\"uhrmann} and Dana {Mendelson}},
 Title = {{Almost sure local well-posedness and scattering for the 4D cubic nonlinear Schr\"odinger equation}},
 FJournal = {{Advances in Mathematics}},
 Journal = {{Adv. Math.}},
 ISSN = {0001-8708},
 Volume = {347},
 Pages = {619--676},
 Year = {2019},
 Publisher = {Elsevier (Academic Press), San Diego, CA},
 Language = {English},
%  MSC2010 = {35Q55 35B40 35B25 35P25 35L05 35R60},
%  Zbl = {1428.35503}
}

@article{dodson-luhrmann-mendelson-20,
Author = {Benjamin {Dodson} and Jonas {L\"uhrmann} and Dana {Mendelson}},
Title = {{Almost sure scattering for the {$4D$} energy-critical defocusing nonlinear wave equation with radial data}},
FJournal = {{American Journal of Mathematics}},
Journal = {{Am. J. Math.}},
% ISSN = {0002-9327; 1080-6377/e},
Volume = {142},
Number = {2},
Pages = {475--504},
Year = {2020},
Publisher = {Johns Hopkins University Press, Baltimore, MD},
Language = {English},
% MSC2010 = {35L71 37L15},
% Zbl = {1440.35217}
}

@article{fan-mendelson-20,
      title={Construction of {$L^2$} log-log blowup solutions for the mass critical nonlinear Schr\"odinger equation}, 
      author={Fan, Chenjie  and Mendelson, Dana},
      year={2020},
      journal = {arXiv-eprint, 2010.07821},
      eprint={2010.07821},
      archivePrefix={arXiv},
      primaryClass={math.AP},
}

@article{gubinelli-koch-oh2021,
   title={Global Dynamics for the Two-dimensional Stochastic Nonlinear Wave Equations},
   journal={Internat. Math. Res. Notices},
   author={Gubinelli, Massimiliano and Koch, Herbert and Oh, Tadahiro and Tolomeo, Leonardo},
   year={2021},
}

@article{hadac2009,
title = {Well-posedness and scattering for the {KP-II} equation in a critical space},
author = {Hadac, Martin and Herr, Sebastian and Koch, Herbert},
journal = {Ann. Inst. H. Poincar{\'e} Anal. Non Lin{\'e}aire},
volume = {26},
number = {3},
pages = {917-941},
year = {2009},
issn = {0294-1449},
doi = {https://doi.org/10.1016/j.anihpc.2008.04.002},
}

@article{herr2011,
author = "Herr, Sebastian and Tataru, Daniel and Tzvetkov, Nikolay",
fjournal = "Duke Mathematical Journal",
journal = "Duke Math. J.",
doi = "10.1215/00127094-1415889",
number = "2",
pages = "329-349",
publisher = "Duke University Press",
title = {Global well-posedness of the energy-critical nonlinear {S}chrödinger equation with small initial data in {$H^1(\mathbb{T}^3)$}},
url = "https://doi.org/10.1215/00127094-1415889",
volume = "159",
year = "2011"
}

@article{killip-murphy-visan-2019,
author = {Rowan Killip and Jason Murphy and Monica Visan},
title = {Almost sure scattering for the energy-critical NLS with radial data below {$H^1(\mathbb{R}^4)$}},
journal = {Commun. Partial Differ. Equ.},
volume = {44},
number = {1},
pages = {51-71},
year  = {2019},
publisher = {Taylor & Francis},
doi = {10.1080/03605302.2018.1541904},
}

@article{kenig-merle-10,
author = "Kenig, Carlos E and Merle, Frank",
title = "Scattering for {$\dot{H}^{1/2}$} bounded solutions to the cubic, defocusing {NLS} in 3 dimensions",
journal = "Trans. Amer. Math. Soc.",
year = "2010",
volume = "362",
number = "4",
pages = "1937-1962",
}

@article{kenig-ponce-vega-00,
author = "Kenig, Carlos E and Ponce, Gustavo and Vega, Luis",
title = "Global well-posedness for semi-linear wave equations ",
journal = "Comm. Partial Diff. Equ.",
year = "2000",
volume = "25",
number = "9-10",
pages = "1741--1752 ",
}

@article{luhrmann-mendelson-14,
author = "L{\"u}hrmann, Jonas and Mendelson, Dana ",
title = "Random data Cauchy theory for nonlinear wave equations of power-type on {$\R^3$} ",
journal = "Comm. Partial Differ. Equ.",
year = "2014",
volume = "39",
number = "12",
pages = "2262-2283",
}

@article{oh-okamoto-19,
title = "On the probabilistic well-posedness of the nonlinear Schr{\"o}dinger equations with non-algebraic nonlinearities",
author = "Tadahiro Oh and Mamoru Okamoto and Oana Pocovnicu",
year = "2019",
volume = "39",
pages = "3479-3520",
journal = "Discrete and Contin. Dyn. Syst.-Series A",
number = "6",
}

@article{qingtang-11,
author = "Su, Qingtang",
year = "2012",
title = "Global well-posedness and scattering for defocusing, cubic {NLS} in
{$\mathbb{R}^3$}",
volume = "19",
number = "2", 
journal = "Math. Res. Lett.",
pages = "431-451"
}

@article{prt2014,
title = "Probabilistic well-posedness for the supercritical nonlinear harmonic oscillator",
author = "Poiret, Aur\'elien and Robert, Didier and Thomann, Laurent",
year = "2014",
journal = "Anal. PDE",
volume = "7", 
number = "4", 
pages = "997-1026",
}

@article{poiret2012,
title = "Solutions globales pour l’{\'e}quation de Schr{\"o}dinger cubique en dimension 3 ",
author = "Poiret, Aur\'elien",
year = "2012",
journal = "arXiv-eprint, 1207.1578",
eprint = "1207.1578",
archivePrefix="arXiv",
primaryClass = "math.AP",
}

@article{sun-2015,
author = "Sun, Chenmin and Xia, Bo ",
title = "Probabilistic well-posedness for supercritical wave equations with periodic boundary condition on dimension three",
year = "2016",
journal = "Illinois J. Math.",
volume = "60",
number = "2",
pages = "481–503",
}

\end{document}